\documentclass[a4paper]{article}
\usepackage{amssymb, latexsym, amsmath, amsthm}
\usepackage{a4wide}

\newtheorem{theorem}{Theorem}[section]
\newtheorem{lemma}[theorem]{Lemma}
\newtheorem{proposition}[theorem]{Proposition}

\newtheorem{corollary}[theorem]{Corollary}

\theoremstyle{definition}

\newtheorem{remark}[theorem]{Remark}

\newcommand{\K}{\mathbb{K}}

\newcommand{\Z}{\mathbb{Z}}
\newcommand{\N}{\mathbb{N}}
\newcommand{\Q}{\mathbb{Q}}
\newcommand{\h}{\mathcal{H}}

\newcommand{\q}{\quad}

\newcommand{\germ}{\mathfrak}

\newcommand{\aut}{{\rm Aut}_{\K}(L)}
\newcommand{\aaut}{{\rm Aut}_{\K}(A)}

\begin{document}

\title{Automorphisms of Generalized Down-Up Algebras}
\author{Paula A.A.B. Carvalho${}^{*}$ and Samuel A. Lopes\thanks{Work partially supported  by \emph{Centro de Matem\'atica da Universidade do Porto} (CMUP),  financed by FCT (Portugal) through the programmes POCTI (\emph {Programa Operacional Ci\^encia, Tecnologia, Inova\c c\~ao}) and POSI  (\emph{Programa Operacional Sociedade da Informa\c c\~ao}), with  national and European Community structural funds.}}
\date{}
\maketitle

\begin{abstract}
A generalization of down-up algebras was introduced by Cassidy and 
Shelton in~\cite{CS04}, the so-called generalized down-up 
algebras.
We describe the automorphism group of conformal Noetherian generalized down-up algebras 
$L(f,r,s,\gamma)$ such that $r$ is not a root of unity, listing explicitly the 
elements of the group. In the last section we apply these results to Noetherian 
down-up algebras, thus obtaining a characterization of the automorphism group of 
Noetherian down-up algebras $A(\alpha, \beta, \gamma)$ for which the roots of 
the polynomial  $X^2-\alpha  X-\beta$ are not both  roots of 
unity.
\end{abstract}

\noindent {\it Keywords:} down-up algebra; generalized down-up algebra; automorphisms; enveloping algebra; generalized Weyl algebra.
\\$ $
\\{\it 2000 Mathematics Subject Classification:} 16W20; 16S30; 17B37; 16S36.

\section*{Introduction}\label{S:int}

Generalized down-up algebras were introduced by Cassidy and Shelton in~\cite{CS04} as a generalization of the down-up algebras $A(\alpha, \beta, \gamma)$ of Benkart and Roby~\cite{BR98}. Generalized down-up algebras include all down-up algebras, the algebras \emph{similar to the enveloping algebra of $\mathfrak{sl}_{2}$} defined by Smith~\cite{spS90}, Le Bruyn's \emph{conformal $\mathfrak{sl}_{2}$ enveloping algebras}~\cite{lLB95} and Rueda's algebras \emph{similar to the enveloping algebra of $\mathfrak{sl}_{2}$}~\cite{sR02}. The reader is encouraged to consult~\cite{CS04} for further details and references.

Two of the most remarkable examples of down-up algebras are $U(\mathfrak{sl}_{2})$ and 
$U(\mathfrak{h})$, the enveloping algebras of the $3$-dimensional complex simple Lie algebra 
$\mathfrak{sl}_{2}$ and of the $3$-dimensional nilpotent, non-abelian Heisenberg Lie algebra 
$\mathfrak{h}$, respectively. These algebras have a very rich structure and representation theory which has been extensively studied, having an unquestionable impact on the theory of semisimple and nilpotent Lie algebras. Nevertheless, a precise description of their symmetries, as given by the understanding of their automorphism group, is yet to be obtained (see~\cite{jD68, jD73} and~\cite{aJ76, jA86}). The problem of describing the automorphism group seems to be considerably simpler when a deformation is introduced. Indeed, the automorphism group of the quantized enveloping algebra 
$U_{q}(\mathfrak{sl}_{2})$ was computed in~\cite{AC92}, and in~\cite{pC95, AD96} the authors independently described the group of automorphisms of the quantum Heisenberg algebra; in all cases it was assumed that the deformation parameter is not a root of unity. Despite these and  other successful results on the description of automorphism groups of  quantum algebras, e.g. \cite{AC92, AD96, pC95, GTEK02, sLn07, LnLe05pr, LnL07pr}, there is yet much to be done. For example, regarding the quantized enveloping algebras $U_{q}(\mathfrak{g}^{+})$, where $\mathfrak{g}$ is a finite-dimensional complex simple Lie algebra and $\mathfrak{g}^{+}$ is a maximal nilpotent subalgebra of 
$\mathfrak{g}$, there is a conjecture of Andruskiewitsch and Dumas~\cite{ADpr} describing the automorphism group of $U_{q}(\mathfrak{g}^{+})$ as a semidirect product of a torus of rank equal to the rank of $\mathfrak{g}$ by a finite group corresponding to the automorphisms of the Dynkin diagram of 
$\mathfrak{g}$. So far, only particular cases of this conjecture have been verified, for $\mathfrak{g}$ of rank at most $3$~\cite{pC95, AD96, sLn07, LnL07pr}. Another difficulty that arises is when the deformation parameter is a root of unity. Very few results are known in this case, e.g.~\cite[Prop.\ 1.4.4, Th\'e.\ 1.4.5]{AC92} and~\cite[Sec.\ I]{AD96}.

It is reasonable to think of a Noetherian generalized down-up algebra as a deformation of an enveloping algebra of a $3$-dimensional Lie algebra. Working over an algebraically closed field of characteristic $0$, we use elementary methods to compute the automorphism groups of Noetherian generalized down-up algebras, under certain assumptions. This is the content of Theorem~\ref{T:thegp:main}. Specializing, in Section~\ref{S:adua}, our results to down-up algebras, we obtain in Theorem~\ref{T:adua:dua:main2} a complete description of the automorphism groups of all Noetherian down-up algebras $A(\alpha, \beta, \gamma)$, under the restriction that at least one of the roots of the polynomial $X^2-\alpha X-\beta$ is not a root of unity.


\section{Generalized down-up algebras}\label{S:gdua}

Throughout this paper, $\N$ is the set of nonnegative integers, $\K$ denotes an algebraically closed field of characteristic $0$ and $\K^*$ is the multiplicative group of units of $\K$. If $A$ is a subset of the ring $R$ then the two-sided ideal of $R$ generated by $A$ is denoted by  $\langle A\rangle$; we also write $\langle x_{1}, \ldots, x_{n}\rangle $ in place of 
$\langle\{ x_{1}, \ldots, x_{n}\}\rangle $.

Given a polynomial $f=a_{0}+a_{1}X+\cdots +a_{n}X^n\in\K[X]$, with all $a_{i}\in\K$, we define the support of $f$ to be the set $\mathrm{supp}\, (f)=\{ i\mid a_{i}\neq 0 \}$ and the degree of $f$, denoted $\deg (f)$, as the supremum of $\mathrm{supp}\, (f)$. In particular, the zero polynomial has degree $-\infty$, the supremum of the empty set.


\subsection{Preliminaries}\label{SS:prel}

Let $f\in\K[X]$ be a polynomial and fix scalars $r, s, \gamma\in\K$. The generalized down-up algebra $L=L(f, r, s, \gamma)$ was defined in~\cite{CS04} as the unital associative $\K$-algebra generated by $d$, $u$ and $h$, subject to the relations:
\begin{align}\label{E:defGdua1}
dh-rhd+\gamma d&=0,\\\label{E:defGdua2}
hu-ruh+\gamma u&=0,\\\label{E:defGdua3}
du-sud+f(h)&=0.
\end{align}

When $f$ has degree one,  we retrieve all down-up algebras 
$A(\alpha, \beta, \gamma)$, $\alpha$, $\beta$, $\gamma\in\K$, for suitable choices of the parameters of $L$. This is argued 
in~\cite[Ex.\ 1.2]{CS04}. To correct some typos, we will explicitly construct isomorphisms between the algebras $A(\alpha, \beta, \gamma)$ and the  algebras $L=L(f, r, s, \gamma)$, in case $f$ has degree one. The reader is referred to~\cite{BR98} for the definition of $A(\alpha, \beta, \gamma)$.

\begin{lemma}[{\cite[Ex.\ 1.2]{CS04}}]\label{L:iso:duaGdua}
\begin{enumerate}
\item Given $\alpha$, $\beta$, $\gamma\in\K$, let $r$ and $s$ be the roots of $X^2-\alpha X-\beta$. Then,
\begin{equation*}
L(X, r, s, \gamma)\simeq A(\alpha, \beta, \gamma).
\end{equation*}
\item Let $\lambda$, $\mu$, $r$, $s$, $\gamma\in\K$ with $\lambda\neq 0$. Then,
\begin{equation*}
L(\lambda X+\mu, r, s, \gamma)\simeq A(r+s, -rs, \lambda\gamma+(r-1)\mu).
\end{equation*}
\end{enumerate}
In both cases, there is an isomorphism taking the canonical generators $d$ and $u$ of $L$ to the canonical generators $d$ and $u$ of $A$, respectively. Under that isomorphism, $h$ is sent to 
$sud-du$ in case (a) and to $\lambda^{-1}(sud-du-\mu)$ in case (b).
\end{lemma}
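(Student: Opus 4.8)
The plan is to prove part~(b) directly and to observe that part~(a) is the special case $\lambda=1$, $\mu=0$: if $r,s$ are the roots of $X^2-\alpha X-\beta$ then $\alpha=r+s$ and $\beta=-rs$, so $A(\alpha,\beta,\gamma)=A(r+s,-rs,\gamma)$, which is precisely the target algebra in~(b) for these values of $\lambda,\mu$. Recall that $A(a,b,c)$ is presented by generators $d,u$ subject to $d^2u=a\,dud+b\,ud^2+c\,d$ and $du^2=a\,udu+b\,u^2d+c\,u$. The key point is that when $f$ has degree one, say $f=\lambda X+\mu$ with $\lambda\neq 0$, relation~\eqref{E:defGdua3} can be solved for $h$: it forces $h=\lambda^{-1}(sud-du-\mu)$, so in $L(\lambda X+\mu,r,s,\gamma)$ the generator $h$ is redundant and $L$ is generated by $d$ and $u$ alone.

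First I would substitute $h=\lambda^{-1}(sud-du-\mu)$ into~\eqref{E:defGdua1}. Using only associativity, a short computation collapses the left-hand side to
\[
\lambda^{-1}\bigl(-d^2u+(r+s)\,dud-rs\,ud^2+(\lambda\gamma+(r-1)\mu)\,d\bigr),
\]
so~\eqref{E:defGdua1} becomes exactly the first defining relation of $A(r+s,-rs,\lambda\gamma+(r-1)\mu)$; symmetrically, substituting into~\eqref{E:defGdua2} produces the second defining relation of the same down-up algebra. Consequently the assignment $d\mapsto d$, $u\mapsto u$ respects the defining relations of $A(r+s,-rs,\lambda\gamma+(r-1)\mu)$ inside $L$, hence extends to a $\K$-algebra homomorphism $\phi\colon A(r+s,-rs,\lambda\gamma+(r-1)\mu)\to L(\lambda X+\mu,r,s,\gamma)$.

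Conversely I would check that $d\mapsto d$, $u\mapsto u$, $h\mapsto\lambda^{-1}(sud-du-\mu)$ sends each of~\eqref{E:defGdua1}--\eqref{E:defGdua3} to $0$ in $A(r+s,-rs,\lambda\gamma+(r-1)\mu)$: relation~\eqref{E:defGdua3} vanishes by construction, while~\eqref{E:defGdua1} and~\eqref{E:defGdua2} map to $\lambda^{-1}$ times the two down-up relations by the computation above. Thus this assignment extends to a homomorphism $\psi\colon L(\lambda X+\mu,r,s,\gamma)\to A(r+s,-rs,\lambda\gamma+(r-1)\mu)$. Since $\phi$ and $\psi$ fix $d$ and $u$, the composites $\psi\phi$ and $\phi\psi$ act as the identity on the respective generating sets (for $\phi\psi$ one uses~\eqref{E:defGdua3} once more to see that $h\mapsto\lambda^{-1}(sud-du-\mu)\mapsto h$), so $\phi$ and $\psi$ are mutually inverse isomorphisms; the final assertion of the lemma is just the formula defining $\psi(h)$, and for~(a) it reduces to $h\mapsto sud-du$. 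There is no genuine obstacle beyond bookkeeping here; the only step demanding care is verifying the collapse of~\eqref{E:defGdua1} and~\eqref{E:defGdua2}, in particular tracking the coefficient of $d$ (resp.\ $u$) so that the third parameter of $A$ emerges as $\lambda\gamma+(r-1)\mu$ rather than something else.
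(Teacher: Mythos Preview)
Your argument is correct. The paper does not actually supply a proof of this lemma: it merely records the explicit isomorphisms (correcting typos in the cited reference) and leaves the verification to the reader, so your computation is exactly the sort of check the paper is implicitly asking for. Your observation that (a) is the specialisation $\lambda=1$, $\mu=0$ of (b), and your strategy of solving \eqref{E:defGdua3} for $h$ to see that $d,u$ already generate $L$, is the natural way to organise the verification; the expansion of \eqref{E:defGdua1} into $\lambda^{-1}\bigl(-d^2u+(r+s)dud-rs\,ud^2+(\lambda\gamma+(r-1)\mu)d\bigr)$ is correct and produces the down-up relation with the right third parameter.
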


Other natural isomorphisms between generalized down-up algebras are the following, for $\lambda\in\K^{*}$:
\begin{itemize}
\item $L(f, r, s, \gamma)\xrightarrow{\simeq}L(f(\lambda^{-1}X), r, s, \lambda\gamma)$, where  $u\mapsto u$, $d\mapsto d$, $h\mapsto \lambda^{-1}h$;
\item $L(f, r, s, \gamma)\xrightarrow{\simeq}L(\lambda f, r, s, \gamma)$, where  $u\mapsto \lambda^{-1}u$, $d\mapsto d$, $h\mapsto h$.
\end{itemize}
Therefore, if convenient, it can be assumed that either $f=0$ or $f$ is monic, and that either $\gamma=0$ or $\gamma=1$. An additional symmetry comes from an antiautomorphism of $L(f, r, s, \gamma)$ interchanging $u$ and $d$ and fixing $h$. Because of this antiautomorphism,  one can carry over properties of the generator $u$ to properties of  $d$, and vice-versa. 


\subsection{Noetherian generalized down-up algebras}\label{SS:noeth}

Several ring-theoretical and homological properties of $L$ were derived by Cassidy and Shelton~\cite[Secs.\ 2, 3]{CS04}, and in~\cite[Sec.\ 4]{CS04}  they classified all simple weight modules of $L$ under the assumption that $rs\neq 0$, which is precisely when $L$ is a Noetherian domain. This classification was later extended by Praton~\cite{iP07} to the non-Noetherian case.

Let us briefly recall some of the results from~\cite{CS04} which we will use often. We begin with 
\cite[Props.\ 2.5--2.6]{CS04}, which extend to $L$ results from~\cite{KMP99}:

\begin{proposition}\label{P:noeth:noethdomrs}
The following conditions are equivalent:
\begin{enumerate}
\item $L$ is Noetherian;
\item $L$ is a domain;
\item $rs\neq 0$.
\end{enumerate}
\end{proposition}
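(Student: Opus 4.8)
The plan is to establish the chain of implications (a) $\Rightarrow$ (b) $\Rightarrow$ (c) $\Rightarrow$ (a), or rather the cycle (b) $\Rightarrow$ (a) $\Rightarrow$ (c) $\Rightarrow$ (b), by exploiting the PBW-type basis of $L$. First I would record the basic structural fact (due to Cassidy and Shelton) that $L(f,r,s,\gamma)$ is a free module over $\K[h]$ with basis $\{u^i d^j : i,j\in\N\}$, so in particular $L$ has a filtration whose associated graded algebra is well understood; this underlies all three equivalences. For (c) $\Rightarrow$ (b) and (c) $\Rightarrow$ (a), the key observation is that when $rs\neq 0$ one can realize $L$ as a \emph{generalized Weyl algebra} over the commutative Noetherian domain $R=\K[h,z]$ (with $z$ corresponding to a central-like element such as $ud$ or $du$, adjusted by a polynomial in $h$), with automorphism $\sigma$ determined by $\sigma(h)=r^{-1}(h+\gamma)$ (or its inverse) and $\sigma(z)$ computed from relation~\eqref{E:defGdua3}. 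Since $R$ is a Noetherian domain and $\sigma$ is an automorphism, the generalized Weyl algebra $R(\sigma, z)$ is a Noetherian domain by the standard results of Bavula on generalized Weyl algebras; this gives both (a) and (b) at once.

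Next I would handle the contrapositive directions, namely $rs=0 \Rightarrow L$ is neither Noetherian nor a domain. By the symmetry interchanging $u$ and $d$ noted in the excerpt, it suffices to treat the case $s=0$ (the case $r=0$ follows, possibly after also using the $h\mapsto\lambda^{-1}h$ rescaling to normalize $\gamma$). When $s=0$, relation~\eqref{E:defGdua3} becomes $du=-f(h)$, and one checks from the PBW basis that the element $u$ (or $d$) generates a one-sided ideal that is not finitely generated: concretely, the left ideal $\sum_{i\ge 0} Lu^i$ or an ascending chain built from annihilators fails to stabilize. To see $L$ is not a domain, I would exhibit explicit zero divisors: using $du = -f(h)$ together with the commutation relations~\eqref{E:defGdua1}--\eqref{E:defGdua2}, one produces a nonzero element annihilated on one side by $u$ or by a suitable polynomial expression, leveraging that $f(h)$ can be factored and that $h$ satisfies no nontrivial relation. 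The cleanest route is probably to compute the associated graded algebra of $L$ with respect to the standard filtration: when $rs=0$ this graded algebra visibly contains nilpotents or zero divisors (the image of $du$ or $ud$ becomes $0$), and one lifts this to $L$ itself.

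The main obstacle I anticipate is the bookkeeping in the generalized Weyl algebra presentation: one must choose the right central element $z$ (it will be $ud$ plus a correction term coming from $f$ and $\gamma$, chosen so that $z$ is a normal element and $h$-$z$ generate a polynomial subalgebra), verify that $d$ and $u$ act as the GWA generators $X$ and $Y$ with $XY = z$ and $YX = \sigma^{-1}(z)$ (or the analogous normalization), and confirm $\sigma(z)$ is again a polynomial in $h$ and $z$ — this requires using~\eqref{E:defGdua3} to re-express $ud$ in terms of $du$ and $f(h)$, then applying~\eqref{E:defGdua1}. Getting this identification exactly right, with the correct twist by $s$, is the delicate computational heart of the argument; once it is in place, invoking Bavula's theorem that a GWA over a Noetherian domain is a Noetherian domain (for $z\ne 0$, which holds since $rs\ne 0$ forces the relevant data to be nonzero) closes the loop immediately. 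Everything else — the PBW basis, the filtration argument for $rs=0$, and the zero-divisor construction — is comparatively routine given the defining relations.
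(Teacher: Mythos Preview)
The paper does not prove this proposition; it is quoted from \cite{CS04} (their Propositions~2.5 and~2.6), so there is no in-paper argument to compare against. Your route for (c)~$\Rightarrow$~(a) and (c)~$\Rightarrow$~(b) via the generalized Weyl algebra presentation is correct and is exactly the structure the paper records in Lemma~\ref{L:noeth:gwa}: with $a=ud$, $D=\K[h,a]$, and $\sigma(h)=rh-\gamma$, $\sigma(a)=sa-f(h)$, one has $L\simeq D(\sigma,a)$, and Bavula's results then yield that $L$ is a Noetherian domain when $rs\neq 0$. So that half of your plan is fine and matches the literature.

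Your treatment of the converse, however, has two genuine gaps. First, the antiautomorphism of $L(f,r,s,\gamma)$ swapping $u$ and $d$ and fixing $h$ is an antiautomorphism of the \emph{same} algebra: if you apply it to relations~\eqref{E:defGdua1}--\eqref{E:defGdua3} you recover exactly those relations, with the same $r$ and the same $s$. It therefore does \emph{not} interchange the parameters $r$ and $s$, and you cannot use it to reduce the case $r=0$ to the case $s=0$. The two cases are structurally different and must be handled separately. (For $r=0$ the zero-divisor argument is in fact immediate: relation~\eqref{E:defGdua1} becomes $d(h+\gamma)=0$, and both factors are nonzero by the PBW basis.) Second, your fallback ``compute the associated graded and lift the zero divisors to $L$'' is logically the wrong direction: the standard transfer principle says that if $\mathrm{gr}\,L$ is a domain then $L$ is, not conversely. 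Exhibiting zero divisors in $\mathrm{gr}\,L$ tells you nothing about $L$ itself. In particular, for $s=0$ with $f\neq 0$ your sketch produces no explicit zero divisors in $L$ and no concrete ascending chain, so that case remains open in your proposal.
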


Given a ring $D$, an automorphism $\sigma$ of $D$ and a central element $a\in D$, the generalized Weyl algebra $D(\sigma, a)$ is the ring extension of $D$ generated by $x$ and $y$, subject to the relations:
\begin{equation}\label{E:noeth:gwa1}
xb=\sigma (b)x, \q\q by=y\sigma (b),  \q\q \text{for all $b\in D$;}
\end{equation}
\begin{equation}\label{E:noeth:gwa2}
yx=a, \q\q xy=\sigma (a).
\end{equation}
Generalized Weyl algebras were introduced and studied by Bavula~\cite{vB93}, and their properties and representation theory have been subsequently studied by himself and several other authors.
If $D$ is a Noetherian $\K$-algebra which is a domain, the automorphism $\sigma$ is $\K$-linear and $a\neq 0$ then $D(\sigma, a)$ is a Noetherian domain (see~\cite{vB93} for example).

As occurs with down-up algebras~\cite{KMP99}, the Noetherian generalized down-up algebras can be presented as generalized Weyl algebras. In fact, set $a=ud$, let $D$ be the commutative polynomial algebra 
$\K[h, a]$ and define the automorphism $\sigma$ of $D$ by the rules $\sigma(h)=rh-\gamma$ and 
$\sigma(a)=sa-f(h)$.

\begin{lemma}[{\cite[Lem.\ 2.7]{CS04}}]\label{L:noeth:gwa}
With the notation introduced above, $L$ is isomorphic to the generalized Weyl algebra $D(\sigma, a)$, under an isomorphism taking $d\in L$ (resp.\ $u$, resp.\ $h$) to $x\in D(\sigma, a)$ (resp.\ $y$, resp.\ $h$).
\end{lemma}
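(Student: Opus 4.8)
The plan is to verify directly that the assignment $d\mapsto x$, $u\mapsto y$, $h\mapsto h$ extends to a $\K$-algebra homomorphism $\varphi\colon L(f,r,s,\gamma)\to D(\sigma,a)$, and then to produce an inverse. First I would check that the images satisfy the three defining relations \eqref{E:defGdua1}--\eqref{E:defGdua3} of $L$. Using \eqref{E:noeth:gwa1} with $b=h$ and $\sigma(h)=rh-\gamma$, one computes $xh=\sigma(h)x=(rh-\gamma)x=rhx-\gamma x$, which rearranges to $xh-rhx+\gamma x=0$, matching \eqref{E:defGdua1}; similarly $hy=y\sigma(h)=y(rh-\gamma)$ gives \eqref{E:defGdua2}. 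For \eqref{E:defGdua3}, note $yx=a$ and $xy=\sigma(a)=sa-f(h)$ by \eqref{E:noeth:gwa2} and the definition of $\sigma$, so $xy-syx+f(h)=sa-f(h)-sa+f(h)=0$, which is exactly the image of \eqref{E:defGdua3}. Hence $\varphi$ is a well-defined algebra homomorphism. Since $x$, $y$ and $h$ generate $D(\sigma,a)$ as a $\K$-algebra (the subalgebra they generate contains $h$, $a=yx$, and then all of $D=\K[h,a]$, hence all of $D(\sigma,a)$ by \eqref{E:noeth:gwa1}), $\varphi$ is surjective.

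To build the inverse, I would use the known PBW-type basis of $L$: by the results of Cassidy and Shelton the monomials $u^i h^j d^k$ (with $i,j,k\in\N$) form a $\K$-basis of $L$, and correspondingly the monomials $y^i h^j x^k$ form a basis of $D(\sigma,a)$ (the latter is a standard fact about generalized Weyl algebras over $D=\K[h,a]$, once one rewrites $a=yx$). One then defines a $\K$-linear map $\psi\colon D(\sigma,a)\to L$ on the basis by $y^i h^j x^k\mapsto u^i h^j d^k$ and checks it is an algebra homomorphism, or — more economically — observes that $\varphi$ sends the basis $u^i h^j d^k$ of $L$ into the span of the $y^i h^j x^k$ in a unitriangular fashion with respect to a suitable ordering, so that $\varphi$ maps a basis to a basis and is therefore bijective. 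Either way the conclusion is that $\varphi$ is an isomorphism with the asserted effect on generators.

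The main obstacle is the bookkeeping needed to identify the basis of $D(\sigma,a)$ and to see that $\varphi$ respects it: one must be careful that $a=yx$ is not an independent generator but a product, so the naive monomial set $\{h^m a^n\}\cdot\{x,y\}$-words has to be reduced to the normal form $y^i h^j x^k$ using \eqref{E:noeth:gwa1}--\eqref{E:noeth:gwa2}, and one needs the fact (from \cite{CS04}, valid precisely because $rs\neq 0$, i.e.\ in the Noetherian case, by Proposition~\ref{P:noeth:noethdomrs}) that no further relations collapse this set in $L$. Granting the PBW basis of $L$ from \cite{CS04}, this is routine; the verification of the three relations above is immediate, so the real content is purely the basis comparison.
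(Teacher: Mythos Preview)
The paper does not supply its own proof of this lemma; it is simply quoted from \cite[Lem.~2.7]{CS04}. Your direct verification---checking that $d\mapsto x$, $u\mapsto y$, $h\mapsto h$ respects relations~(\ref{E:defGdua1})--(\ref{E:defGdua3}), observing surjectivity, and then comparing PBW-type bases---is correct and is exactly the standard argument.

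One small correction to your parenthetical remark: the PBW basis $\{u^{i}h^{j}d^{k}\}$ of $L$ is established in \cite{CS04} for \emph{all} values of $r,s,\gamma,f$, not only in the Noetherian case. The hypothesis $rs\neq 0$ enters elsewhere: it is precisely what is needed for the map $\sigma(h)=rh-\gamma$, $\sigma(a)=sa-f(h)$ to be an \emph{automorphism} of $D=\K[h,a]$ (rather than merely an endomorphism), so that the generalized Weyl algebra $D(\sigma,a)$ is defined at all. With that minor reattribution of the role of $rs\neq 0$, your argument is complete.
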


Let $R$ be a ring and let $\tau$ be an endomorphism of $R$. Recall that a (left) $\tau$-derivation of $R$ is an additive map $\delta : R\rightarrow R$ which satisfies the relation
$
\delta (ab)=\tau (a)\delta (b)+\delta (a) b
$
for all $a, b\in R$. Given $R$, $\tau$ and $\delta$ as above, we can form the skew polynomial ring 
$R[\theta;\tau, \delta ]$. As a left $R$-module, $R[\theta;\tau, \delta ]$ is free with basis $\{ \theta^{i} \mid i\geq 0 \}$ and the multiplication in $R[\theta;\tau, \delta ]$ is determined by that of $R$ and the rule:
$$
\theta a=\tau (a)\theta+\delta (a),
$$
for $a\in R$. Naturally, if $\tau'$ is an endomorphism of $R[\theta;\tau, \delta ]$ and $\delta'$ is a $\tau'$-derivation of $R[\theta;\tau, \delta ]$, this construction can be repeated to obtain an iterated skew polynomial ring $R[\theta;\tau, \delta ][\Phi;\tau', \delta']$, and so on.

The next remark will be useful when comparing normal elements of $L$ generating the same ideal. 

\begin{lemma}\label{L:noeth:units}
If $rs\neq 0$ then $L$ is an iterated skew polynomial ring over $\K$ and the group of units of $L$ is $\K^*$.
\end{lemma}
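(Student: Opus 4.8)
The plan is to exhibit $L=L(f,r,s,\gamma)$ explicitly as an iterated skew polynomial ring in three variables over $\K$, and then read off the units from the leading-coefficient structure of such a ring. First I would use the defining relations \eqref{E:defGdua1}--\eqref{E:defGdua3} to order the generators as $h$, then $d$, then $u$. Starting from the commutative polynomial ring $\K[h]$, relation \eqref{E:defGdua1} rewritten as $dh=r^{-1}hd+r^{-1}\gamma d=r^{-1}(h+\gamma)d$ (using $r\neq 0$) shows that $d$ acts on $\K[h]$ by the algebra endomorphism $\tau_1$ determined by $\tau_1(h)=r^{-1}(h+\gamma)$, with zero $\tau_1$-derivation; since $r\neq 0$, $\tau_1$ is in fact an automorphism of $\K[h]$. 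Hence $R_1:=\K[h][d;\tau_1]$ is a skew polynomial ring. Next, relation \eqref{E:defGdua2} written as $hu=ruh-\gamma u$ gives $hu=(ru-\gamma)u$... more precisely $h\cdot u = r u h - \gamma u$, i.e. $h$ times $u$; to make $u$ the outer variable I instead read \eqref{E:defGdua2} as describing how $u$ moves past $h$: $uh = r^{-1}(h+\gamma)u$ wait — one must be careful which side. The correct bookkeeping is that $u$ past $h$ is governed by \eqref{E:defGdua2} and $u$ past $d$ by \eqref{E:defGdua3}: from \eqref{E:defGdua2}, $hu-ruh+\gamma u=0$ gives $uh = r^{-1}(h+\gamma)\,u$ after solving, and from \eqref{E:defGdua3}, $du-sud+f(h)=0$ gives $ud = s^{-1}du + s^{-1}f(h)$ (using $s\neq 0$). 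Thus on $R_1=\K[h][d;\tau_1]$ define the endomorphism $\tau_2$ by $\tau_2(h)=r^{-1}(h+\gamma)$ and $\tau_2(d)=s^{-1}d$, and the $\tau_2$-derivation $\delta_2$ by $\delta_2(h)=0$, $\delta_2(d)=s^{-1}f(h)$; since $r,s\neq 0$ one checks $\tau_2$ is an automorphism of $R_1$. Then $L\cong R_1[u;\tau_2,\delta_2]=\K[h][d;\tau_1][u;\tau_2,\delta_2]$ is an iterated skew polynomial ring over $\K$. (This is exactly the presentation implicit in \cite{CS04}; one should double-check the two consistency conditions, namely that $\delta_2$ really is a $\tau_2$-derivation and that the relation $\delta_2(d)$ together with $\tau_2$ reproduces \eqref{E:defGdua3}.)

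The second and final step computes the group of units. For any skew polynomial ring $S=R[\theta;\tau,\delta]$ with $R$ a domain and $\tau$ injective, the leading term of a product is the product of the leading terms (degrees in $\theta$ add, because $\tau$ is injective and $R$ is a domain), so $S$ is again a domain and a unit of $S$ must have $\theta$-degree $0$, i.e. lie in $R$, and be a unit of $R$. Applying this three times: a unit of $L\cong\K[h][d;\tau_1][u;\tau_2,\delta_2]$ must lie in $\K[h][d;\tau_1]$ and be a unit there; a unit of $\K[h][d;\tau_1]$ must lie in $\K[h]$ and be a unit there; and the units of the commutative polynomial ring $\K[h]$ are exactly $\K^*$. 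Hence the group of units of $L$ is $\K^*$, as claimed. (By Proposition~\ref{P:noeth:noethdomrs} the hypothesis $rs\neq 0$ is equivalently the statement that $L$ is a Noetherian domain, which is consistent with the iterated-skew-polynomial description.)

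The only genuinely delicate point is the bookkeeping in setting up the iterated skew polynomial structure: one must pick the order of the generators so that each defining relation \eqref{E:defGdua1}--\eqref{E:defGdua3} expresses the commutation of a later generator past an earlier one in the exact form $\theta a=\tau(a)\theta+\delta(a)$, verify that the would-be $\tau_i$ are well-defined algebra endomorphisms (this is where $r,s\neq 0$ enters, to invert the scalars and to get automorphisms rather than mere endomorphisms), and verify the derivation identity for $\delta_2$ on generators. Once the presentation is in hand, the units computation is a routine degree argument and carries no obstruction. As a cross-check, this recovers the known fact that the units of a Noetherian down-up algebra are just the scalars, and it is also consistent with Lemma~\ref{L:noeth:gwa}: in the generalized Weyl algebra picture $L\cong D(\sigma,a)$ with $D=\K[h,a]$, a $\Z$-degree argument in the grading by powers of $x,y$ likewise forces units into $D_0=D$ and then into $\K^*$.
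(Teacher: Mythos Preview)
Your approach is essentially identical to the paper's: realize $L$ as $\K[h][d;\sigma][u;\sigma^{-1},\delta]$ and then use the standard degree argument in skew polynomial rings to force units down to $\K^{*}$. The paper simply cites ``well-known results'' for the second step, which you have spelled out correctly.

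There is, however, a bookkeeping slip in your first extension. From relation~\eqref{E:defGdua1}, $dh-rhd+\gamma d=0$ gives $dh=(rh-\gamma)d$, so the correct twist is $\tau_{1}(h)=rh-\gamma$ (this is the paper's $\sigma$), not $r^{-1}(h+\gamma)$. Your formula $dh=r^{-1}(h+\gamma)d$ is what one gets for $hd$, not $dh$. Your second twist $\tau_{2}(h)=r^{-1}(h+\gamma)$, $\tau_{2}(d)=s^{-1}d$ is correct and equals $\sigma^{-1}$; this is why the paper writes the ring as $\K[h][d;\sigma][u;\sigma^{-1},\delta]$. With that correction the argument goes through exactly as you describe, and you were right to flag this bookkeeping as the only delicate point.
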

\begin{proof}
We can realize $L$ as the iterated skew polynomial ring
$$
\K[h][d; \sigma][u; \sigma^{-1}, \delta],
$$
where the automorphism $\sigma$ of $\K[h]$ given by $\sigma(h)=rh-\gamma$ is extended to an automorphism of $\K[h][d; \sigma]$ by defining $\sigma (d)=sd$ and the $\sigma$-derivation $\delta$ of $\K[h][d; \sigma]$ is determined by the rules $\delta (h)=0$ and $\delta (d)=s^{-1}f(h)$. Now it follows from well-known results on skew polynomial rings that the units of $L$ are just the non-zero scalars.
\end{proof}

\begin{remark}
The hypothesis $rs\neq 0$ in the previous lemma is not unnecessary. For example, if $r=0$ then the calculation
$$
(1+\gamma u+uh)(1-\gamma u-uh)=1=(1-\gamma u-uh)(1+\gamma u+uh)
$$
shows that $1+\gamma u+uh$ is a non-scalar unit of $L=L(f, 0, s, \gamma)$. 
\end{remark}


\subsection{Conformal generalized down-up algebras}\label{SS:conf}

Generalized down-up algebras can also be viewed as ambiskew polynomial rings (see~\cite[Sec.\ 2]{CS04} and~\cite{dJ00}). In this context, $L$ is said to be conformal if there exists a polynomial $g\in\K[X]$ such that $f(X)=sg(X)-g(rX-\gamma)$. One of the advantages of $L$ being conformal is that in this case the element $z=du-g(rh-\gamma)=s(ud-g(h))$  is normal and satisfies the relations $zh=hz$, $dz=szd$ and $zu=suz$; furthermore, $z$  is nonzero provided $s\neq 0$.

If $f=0$, then clearly $L$ is conformal. Otherwise, write
\begin{equation}\label{E:conf}
f(X)=a_{0}+a_{1}X+\cdots +a_{n}X^{n}
\end{equation}
with $a_{i}\in\K$, $n\geq 0$ and $a_{n}\neq 0$. Hence $\deg (f)=n$. Cassidy and Shelton~\cite[Lem.\ 2.8]{CS04} give a sufficient condition for $L$ to be conformal, namely that $s\neq r^{i}$ for all 
$0\leq i\leq n$. As is pointed out, this condition is not necessary (take for example $f(X)=X$, $r=s=\gamma=1$ and $g(X)=\frac{1}{2}(X^2+X)$). 

If $\gamma=0$ it is easy to give a necessary and sufficient condition for $L$ to be conformal. We will see shortly that, up to isomorphism, the condition $\gamma=0$ is not very restrictive.

\begin{lemma}\label{L:conf:gamma0}
Let $f$ be as in~(\ref{E:conf}). Then $L(f, r, s, 0)$ is conformal if and only if $s\neq r^{i}$ for all $i$ such that $a_{i}\neq 0$. In that case,  a polynomial $g$ satisfying  $f(X)=sg(X)-g(rX)$  exists and is unique if we impose the additional condition that  $\mathrm{supp}\, (f)=\mathrm{supp}\, (g)$; in particular, $g$ can be chosen so that $\deg(g)=\deg(f)$.
\end{lemma}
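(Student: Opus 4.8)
The plan is to reduce everything to a linear-algebra computation on coefficients by comparing the polynomial identity $f(X)=sg(X)-g(rX)$ term by term. Write $f$ as in~(\ref{E:conf}) and look for $g=\sum_{i} b_{i}X^{i}$. Since the map $X^{i}\mapsto sX^{i}-r^{i}X^{i}=(s-r^{i})X^{i}$ is diagonal in the monomial basis of $\K[X]$, the equation $f=sg-g(rX)$ decouples into the scalar equations $a_{i}=(s-r^{i})b_{i}$, one for each $i$. Thus a solution $g$ exists if and only if, for every $i$ with $a_{i}\neq 0$, the scalar $s-r^{i}$ is invertible, i.e.\ $s\neq r^{i}$; this is exactly the stated criterion. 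When it holds, one simply sets $b_{i}=a_{i}/(s-r^{i})$ for $i\in\mathrm{supp}\,(f)$, which forces $\mathrm{supp}\,(g)\subseteq\mathrm{supp}\,(f)$.

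For the uniqueness and the support statement, I would argue as follows. The ambiguity in $g$ is a solution $g_{0}$ of the homogeneous equation $sg_{0}(X)=g_{0}(rX)$; comparing coefficients gives $(s-r^{i})c_{i}=0$ for each $i$, so $c_{i}=0$ unless $s=r^{i}$. Hence if we additionally require $\mathrm{supp}\,(g)=\mathrm{supp}\,(f)$ (equivalently $\mathrm{supp}\,(g)\subseteq\mathrm{supp}\,(f)$, since the reverse inclusion is automatic from $a_{i}=(s-r^{i})b_{i}$ and $s\neq r^{i}$ on $\mathrm{supp}\,(f)$), then any admissible index $i$ with $s=r^{i}$ lies outside $\mathrm{supp}\,(f)$ and so must have $b_{i}=0$; this kills all homogeneous ambiguity and pins down $g$ uniquely. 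Since $a_{n}\neq 0$ and $s\neq r^{n}$, we get $b_{n}\neq 0$, so $\deg(g)=\deg(f)=n$, which also handles the final clause. The case $f=0$ is trivial: $g=0$ works and the conformality condition is vacuous.

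There is really no hard part here; the only thing to be slightly careful about is bookkeeping with the convention $\deg(0)=-\infty$ and making sure the "$\mathrm{supp}\,(g)=\mathrm{supp}\,(f)$" normalization is stated in a way that is consistent with the automatic inclusion $\mathrm{supp}\,(f)\subseteq\mathrm{supp}\,(g)$ coming from the recursion $a_{i}=(s-r^{i})b_{i}$. Once those conventions are fixed, the proof is the coefficient comparison above, taking at most a few lines.
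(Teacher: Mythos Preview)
Your proposal is correct and follows essentially the same approach as the paper: both arguments compare coefficients in the monomial basis to reduce $f(X)=sg(X)-g(rX)$ to the scalar equations $a_{i}=(s-r^{i})b_{i}$, obtain the criterion $s\neq r^{i}$ for $i\in\mathrm{supp}\,(f)$, and construct $g$ via $b_{i}=a_{i}/(s-r^{i})$. Your treatment of uniqueness via the homogeneous equation is slightly more explicit than the paper's (which simply says it is ``clear from the construction''), but the underlying reasoning is identical.
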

\begin{proof}
Write $g(X)=b_{0}+b_{1}X+\cdots +b_{m}X^m$, so 
$$
sg(X)-g(rX)=\sum_{i=0}^{m}(s-r^{i})b_{i}X^{i}.
$$ 
Thus, if $f(X)=sg(X)-g(rX)$ then $m\geq n$, $a_{i}=(s-r^{i})b_{i}$ for all $0\leq i\leq n$ and $(s-r^{i})b_{i}=0$ for  $i>n$. In particular, $\mathrm{supp}\, (f)\subseteq \mathrm{supp}\, (g)$ and the condition that $s\neq r^{i}$ for all $i$ such that $a_{i}\neq 0$ is necessary for $L$ to be conformal. Moreover, if this condition is satisfied and we take 
$$
g(X)=\sum_{a_{i}\neq 0}\frac{a_{i}}{s-r^{i}}\, X^{i},
$$
then we see that $L$ is indeed conformal with $f(X)=sg(X)-g(rX)$ and $\mathrm{supp}\, (f)=\mathrm{supp}\, (g)$. The uniqueness is clear from the construction.
\end{proof}

\begin{proposition}\label{P:conf:gamma0rnot1}
If $r\neq 1$ then $L(f, r, s, \gamma)\simeq L(\tilde{f}, r, s, 0)$ for some $\tilde{f}\in\K[X]$ of the same degree as $f$. Furthermore, $L(f, r, s, \gamma)$ is conformal if and only if $L(\tilde{f}, r, s, 0)$ is conformal.
\end{proposition}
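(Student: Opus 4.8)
The plan is to obtain the isomorphism from a linear change of the generator $h$, chosen so that the parameter $\gamma$ is absorbed. Since $r\neq 1$, I set $c=\gamma/(r-1)\in\K$ and define $\tilde f(X)=f(X+c)$. Substituting $X+c$ for $X$ leaves the leading coefficient unchanged, so $\deg(\tilde f)=\deg(f)$ (with the usual convention if $f=0$). I then claim there is a $\K$-algebra homomorphism $\phi\colon L(\tilde f,r,s,0)\to L(f,r,s,\gamma)$ sending the canonical generators $d$, $u$, $h$ to $d$, $u$, $h-c$ respectively, together with a homomorphism $\psi\colon L(f,r,s,\gamma)\to L(\tilde f,r,s,0)$ sending $d$, $u$, $h$ to $d$, $u$, $h+c$; since $\phi$ and $\psi$ are visibly mutually inverse on generators, composing them gives the identity and hence the asserted isomorphism $L(f,r,s,\gamma)\simeq L(\tilde f,r,s,0)$.

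To see that $\phi$ is well defined I would check that the proposed images satisfy the three defining relations of $L(\tilde f,r,s,0)$, i.e.\ relations \eqref{E:defGdua1}--\eqref{E:defGdua3} with $\gamma$ replaced by $0$ and $f$ replaced by $\tilde f$. Substituting $d$, $u$, $h-c$ into the first two relations and using \eqref{E:defGdua1}--\eqref{E:defGdua2} in $L(f,r,s,\gamma)$, each reduces to the scalar identity $rc-c-\gamma=0$, which holds by the choice of $c$; substituting into the third relation and using \eqref{E:defGdua3}, it reduces to $\tilde f(h-c)=f(h)$, which holds by the definition of $\tilde f$. The verification that $\psi$ is well defined is the mirror computation, with $c$ replaced by $-c$ and $f$ by $\tilde f$ (noting $f(X)=\tilde f(X-c)$). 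This settles the first assertion.

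For the statement on conformality I would transport the conformality equation along the same shift. By definition, $L(f,r,s,\gamma)$ is conformal precisely when $f(X)=sg(X)-g(rX-\gamma)$ for some $g\in\K[X]$, and $L(\tilde f,r,s,0)$ is conformal precisely when $\tilde f(X)=s\tilde g(X)-\tilde g(rX)$ for some $\tilde g\in\K[X]$. Given such a $g$, put $\tilde g(X)=g(X+c)$ and substitute $X+c$ for $X$ in the first equation; the inner argument on the right becomes $r(X+c)-\gamma=rX+(rc-\gamma)=rX+c$, using the key identity $rc-\gamma=c$ (an immediate consequence of $c(r-1)=\gamma$), so the equation becomes exactly $\tilde f(X)=s\tilde g(X)-\tilde g(rX)$. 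The substitution is invertible, so conversely a $\tilde g$ for $\tilde f$ yields $g(X)=\tilde g(X-c)$ for $f$; hence the two conformality conditions are equivalent.

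I do not expect a genuine obstacle: the argument is a short, direct computation. The only points needing care are bookkeeping ones — tracking the direction of the maps so that one legitimately appeals to the universal property of the generators-and-relations presentation of a generalized down-up algebra, and making sure the shift used in the conformality step is the \emph{same} constant $c$ as in the isomorphism, which is precisely the content of the identity $rc-\gamma=c$. The hypothesis $r\neq 1$ is used only to guarantee that $c=\gamma/(r-1)$ exists.
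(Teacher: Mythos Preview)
Your proof is correct and follows the same strategy as the paper: an affine change of the generator $h$ together with the matching substitution in the conformality equation. The only difference is cosmetic---you use the pure translation $h\mapsto h+c$ with $c=\gamma/(r-1)$ and $\tilde f(X)=f(X+c)$, whereas the paper uses the affine map $h\mapsto \frac{1}{r-1}(h+\gamma)$ and $\tilde f(X)=f\bigl(\frac{1}{r-1}(X+\gamma)\bigr)$; your choice is slightly cleaner, and since the proposition only asserts existence of \emph{some} $\tilde f$ of the same degree, either works.
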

\begin{proof}
Define $\tilde{f}$ by the formula $\tilde{f}(X)=f\left(\frac{1}{r-1}(X+\gamma)  \right)$. Now consider the algebra epimomorphism $\phi: \K\langle d, u, h \rangle\rightarrow L(\tilde{f}, r, s, 0)$ defined on the free 
$\K$-algebra on free generators $d$, $u$, $h$ by:
\begin{equation*}
\phi (d)=d,\q\q \phi (u)=u\q\q\text{and}\q\q\phi (h)=\frac{1}{r-1}(h+\gamma).
\end{equation*}

Using the relations in $L(\tilde{f}, r, s, 0)$ and the definition of $\tilde{f}$ we find that:
\begin{eqnarray*}
\phi (dh-rhd+\gamma d)&=&\frac{1}{r-1}d(h+\gamma)-\frac{r}{r-1}(h+\gamma)d+\gamma d\\
&=&\frac{1}{r-1}\left( (dh-rhd)+(1-r)\gamma d \right)+\gamma d\\
&=&\frac{1-r}{r-1}\gamma d+\gamma d=0;
\end{eqnarray*}
similarly, $\phi (hu-ruh+\gamma u)=0$; and finally
\begin{eqnarray*}
\phi (du-sud+f(h))&=&du-sud+f\left(\frac{1}{r-1}(h+\gamma)  \right)\\
&=&du-sud+\tilde{f}(h)=0.
\end{eqnarray*}
Therefore, by~(\ref{E:defGdua1})--(\ref{E:defGdua3}), $\phi$ induces an algebra epimorphism,  still denoted  $\phi$, $L(f, r, s, \gamma)\rightarrow L(\tilde{f}, r, s, 0)$. To conclude that this map is an isomorphism it is enough to proceed similarly and define an algebra map $\psi: L(\tilde{f}, r, s, 0)\rightarrow L(f, r, s, \gamma)$ satisfying:
\begin{equation*}
\psi (d)=d,\q\q \psi (u)=u\q\q\text{and}\q\q\psi (h)=(r-1)h-\gamma.
\end{equation*}
The maps $\phi$ and $\psi$ are mutual inverses.

For $g\in\K[X]$ define $\tilde{g}$ by $\tilde{g}(X)=g\left(\frac{1}{r-1}(X+\gamma)  \right)$. The last statement follows because the equations $f(X)=sg(X)-g(rX-\gamma)$ and $\tilde{f}(X)=s\tilde{g}(X)-\tilde{g}(rX)$ are equivalent in $\K[X]$.
\end{proof}

In view of Lemma~\ref{L:conf:gamma0} and Proposition~\ref{P:conf:gamma0rnot1}, it remains to determine when $L(f, 1, s, \gamma)$ is conformal, which is what we do next.

\begin{proposition}\label{P:conf:ris1}
$L(f, 1, s, \gamma)$ is conformal in all cases except the case $L(f, 1, 1, 0)$ with $f\neq 0$.
\end{proposition}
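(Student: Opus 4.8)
The plan is to analyze directly the defining equation for conformality, $f(X) = sg(X) - g(rX - \gamma)$, in the specific case $r = 1$. First I would split into cases according to whether $s = 1$ or $s \neq 1$, and also handle $f = 0$ separately (where conformality is automatic, with $g = 0$). So assume $f \neq 0$ and write $f$ as in~(\ref{E:conf}) with leading coefficient $a_n \neq 0$.

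If $s \neq 1$, I expect conformality to always hold. The natural candidate is to solve for $g$ degree by degree: writing $g(X) = \sum_i b_i X^i$, the operator $g \mapsto sg(X) - g(X - \gamma)$ is, in the basis of monomials, upper triangular with diagonal entries $s - 1 \neq 0$ (since substituting $X \mapsto X - \gamma$ is unipotent, fixing leading terms). Hence this linear operator on $\K[X]$, or on each finite-dimensional space $\K[X]_{\leq n}$ of polynomials of degree at most $n$, is invertible, so a $g$ of degree $n$ with $sg(X) - g(X-\gamma) = f(X)$ exists. This establishes conformality whenever $r = 1$ and $s \neq 1$.

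If $s = 1$, the equation becomes $f(X) = g(X) - g(X - \gamma)$. Here I split on $\gamma$. If $\gamma \neq 0$, the difference operator $g(X) \mapsto g(X) - g(X-\gamma)$ lowers degree by exactly one and is surjective onto $\K[X]$ (it is the classical finite-difference operator, up to scaling; one can integrate formally, choosing $g$ of degree $n+1$), so $L(f,1,1,\gamma)$ is conformal for every $f$. If $\gamma = 0$, the operator is identically zero, so the equation $f = g - g = 0$ forces $f = 0$; thus $L(f,1,1,0)$ is conformal precisely when $f = 0$, and not conformal when $f \neq 0$. Assembling the three cases — $s \neq 1$ (always conformal), $s = 1$ and $\gamma \neq 0$ (always conformal), $s = 1$ and $\gamma = 0$ (conformal iff $f = 0$) — gives exactly the stated conclusion: the only non-conformal case is $L(f,1,1,0)$ with $f \neq 0$.

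I do not anticipate a serious obstacle here; the argument is a routine triangularity/degree count for the linear map $g \mapsto sg(X) - g(rX-\gamma)$ acting on polynomials. The one point requiring a little care is the $s = 1$, $\gamma \neq 0$ surjectivity claim, where one must verify that raising the degree of $g$ by one genuinely suffices and that the construction terminates; this is handled by an explicit recursion on coefficients or by noting that the finite-difference operator $\Delta_\gamma$ maps $\K[X]_{\leq n+1}$ onto $\K[X]_{\leq n}$. Everything else is immediate from the discussion preceding the statement, in particular the observation that conformality only depends on the solvability of that polynomial identity.
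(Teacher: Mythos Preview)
Your proposal is correct and follows essentially the same case split as the paper: $f=0$ trivially, $s=1$ with $\gamma=0$ forces $f=0$, and $s=1$ with $\gamma\neq 0$ is handled by surjectivity of the finite-difference operator (the paper does this by an induction showing each $X^n$ lies in the image, which is exactly your degree-lowering observation). The only minor difference is in the case $s\neq 1$: the paper simply invokes the sufficient criterion of \cite[Lem.~2.8]{CS04} (since $r=1$ gives $r^i=1\neq s$ for all $i$), whereas you argue directly that $g\mapsto sg(X)-g(X-\gamma)$ is upper-triangular on $\K[X]_{\le n}$ with nonzero diagonal $s-1$, which is precisely the content of that lemma in this situation.
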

\begin{proof}
Suppose $s\neq 1$. Then by~\cite[Lem.\ 2.8]{CS04} $L(f, 1, s, \gamma)$ is conformal ($s\neq r^{i}$ for all $i$). Also, $L(f, 1, s, \gamma)$ is conformal if $f=0$. Hence, if $L(f, 1, s, \gamma)$ is not conformal then $s=1$ and $f\neq 0$. 

Suppose first that $s=1$, $f\neq 0$ and $\gamma=0$. Then the conformality condition becomes $f(X)=g(X)-g(X)=0$, so indeed $L(f, 1, 1, 0)$ is not conformal for $f\neq 0$. It remains to show that $L(f, 1, 1, \gamma)$ is conformal if $f\neq 0$ and $\gamma\neq 0$. This amounts to showing that the linear map 
$\K[X]\rightarrow\K[X]$ defined by $g(X)\mapsto g(X)-g(X-\gamma)$ is onto. A routine induction on $n$ shows that $X^n$ is in the image of this map for all $n\geq 0$, so the map is indeed onto .
\end{proof}

\begin{remark}
The notion of conformality is not invariant under isomorphism. For example, there is an isomorphism $L(X, 1, 2, 1)\rightarrow L(X+1, 2, 1, 0)$, taking $d$ to $d$, $u$ to $u$ and $h$ to $ud+h+1$. Nevertheless, $L(X, 1, 2, 1)$ is conformal, by Proposition~\ref{P:conf:ris1}, whereas by 
Lemma~\ref {L:conf:gamma0}, $L(X+1, 2, 1, 0)$ is not conformal.
\end{remark}


\subsection{The $\Z$-grading}\label{SS:grad}

Given the defining relations~(\ref{E:defGdua1})--(\ref{E:defGdua3}),  there is a $\Z$-grading of $L$ obtained by assigning to the generators $d$, $u$ and $h$ the degrees $-1$, $1$ and $0$, respectively~\cite[Sec.\ 4]{CS04}. We thus get a decomposition $L=\bigoplus_{i\in\Z}L_{i}$ of $L$ into homogeneous subspaces. Whenever $rs\neq 0$ these are easy to describe, either by using the isomorphism $L\simeq D(\sigma, a)$ of Lemma~\ref{L:noeth:gwa}, or by invoking~\cite[Prop.\ 4.1]{CS04}:

\begin{proposition}\label{P:grad:Li}
Assume $rs\neq 0$. Then $L_{0}=D=\K[h, a]$ is the commutative polynomial algebra generated by $h$ and $a=ud$, $L_{-i}=Dd^{i}=d^{i}D$ and $L_{i}=Du^{i}=u^{i}D$, for $i>0$.
\end{proposition}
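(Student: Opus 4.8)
The plan is to exploit the isomorphism $L\simeq D(\sigma,a)$ of Lemma~\ref{L:noeth:gwa}, where $D=\K[h,a]$, $\sigma(h)=rh-\gamma$, $\sigma(a)=sa-f(h)$, and $d,u,h$ correspond to $x,y,h$. Under this identification the $\Z$-grading on $L$ corresponds to the standard grading on the generalized Weyl algebra in which $x$ has degree $-1$, $y$ has degree $1$, and $D$ sits in degree $0$; indeed, assigning these degrees is compatible with the defining relations~(\ref{E:noeth:gwa1})--(\ref{E:noeth:gwa2}) since $\sigma$ preserves $D$ and $a\in D$. So it suffices to prove the analogous statement for $D(\sigma,a)$.

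First I would establish that $D(\sigma,a)$ is a free left $D$-module with basis $\{x^i\mid i>0\}\cup\{1\}\cup\{y^i\mid i>0\}$. This is the standard normal form for generalized Weyl algebras: using the relations~(\ref{E:noeth:gwa1}) one moves all occurrences of $x$ and $y$ to the right past elements of $D$, and using~(\ref{E:noeth:gwa2}) one reduces any product $xy$ or $yx$ to an element of $D$, so that every monomial collapses to one of the form $bx^i$, $b$, or $by^i$ with $b\in D$; linear independence over $D$ is part of the construction of $D(\sigma,a)$ (and can be checked here via the faithful representation of $L$ on $\K[h,a]$, or is quoted from~\cite{vB93}). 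This immediately gives the decomposition $D(\sigma,a)=\bigoplus_{i<0}Dx^{-i}\oplus D\oplus\bigoplus_{i>0}Dy^i$, and since each summand is homogeneous of the appropriate degree for the grading above, it coincides with the homogeneous decomposition: $L_0=D$, $L_{-i}=Dd^i$ and $L_i=Du^i$ for $i>0$.

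It remains to see that $Dd^i=d^iD$ and $Du^i=u^iD$, equivalently $Dx^i=x^iD$ and $Dy^i=y^iD$. This follows from~(\ref{E:noeth:gwa1}): for $b\in D$ we have $x^i b=\sigma^i(b)x^i$ and $b y^i = y^i\sigma^i(b)$, and since $\sigma$ is an automorphism of $D$, the maps $b\mapsto\sigma^i(b)$ are bijections of $D$, whence $Dx^i=\{\sigma^i(b)x^i\mid b\in D\}=x^iD$ and similarly $Dy^i=y^iD$. Translating back via the isomorphism of Lemma~\ref{L:noeth:gwa} yields the statement for $L$.

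The only real obstacle is the freeness/normal-form claim for $D(\sigma,a)$, i.e.\ that the evident spanning set is in fact a basis; but this is built into Bavula's construction of generalized Weyl algebras and is exactly the content invoked in~\cite{vB93}, so I would simply cite it (alternatively, one can deduce it from the iterated skew polynomial presentation in Lemma~\ref{L:noeth:units}, where $\{h^j d^i\}$ and $\{h^j u^i\}$-type monomials form a PBW-style basis). Everything else is a routine bookkeeping of the defining relations.
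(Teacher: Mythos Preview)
Your proposal is correct and follows exactly the first of the two routes the paper itself points to: it invokes the isomorphism $L\simeq D(\sigma,a)$ from Lemma~\ref{L:noeth:gwa}, uses Bavula's normal form for generalized Weyl algebras to identify the graded pieces, and then uses the bijectivity of $\sigma$ to get $Dd^{i}=d^{i}D$ and $Du^{i}=u^{i}D$. The paper does not actually write out a proof---it simply remarks that the result follows ``either by using the isomorphism $L\simeq D(\sigma,a)$ of Lemma~\ref{L:noeth:gwa}, or by invoking~\cite[Prop.\ 4.1]{CS04}''---so your write-up is a fleshed-out version of the first alternative and is entirely in line with the intended argument.
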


This result  has some interesting consequences, as the next Corollary shows. We recall the reader that an element $t$ of a ring $R$ is said to be normal if $tR=Rt$.

\begin{corollary}\label{C:grad:homrels}
Assume $rs\neq 0$ and let $t\in L_{i}$, for some $i\in\Z$. Recall the automorphism $\sigma$ of $D$ defined just before Lemma~\ref{L:noeth:gwa} by $\sigma(h)=rh-\gamma$ and 
$\sigma(a)=sa-f(h)$. Then:
\begin{enumerate}
\item $pt=t\sigma^{i}(p)$ for all $p\in D$;
\item If $t$ is also assumed to be normal then there exist $\lambda, \mu\in\K^{*}$ such that $td=\lambda dt$ and $tu=\mu ut$.
\end{enumerate}
\end{corollary}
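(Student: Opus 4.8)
The plan is to deduce both statements directly from the generalized Weyl algebra structure of $L$ recorded in Lemma~\ref{L:noeth:gwa}, together with the description of the homogeneous components in Proposition~\ref{P:grad:Li}. For part (a), I would first treat the case $i\geq 0$. By Proposition~\ref{P:grad:Li}, $t=u^{i}p_{0}$ for some $p_{0}\in D$ when $i>0$ (and $t=p_{0}\in D$ when $i=0$, which is trivial since $D$ is commutative). Under the isomorphism of Lemma~\ref{L:noeth:gwa}, $u$ corresponds to the generator $y$ of $D(\sigma,a)$, and the defining relations~(\ref{E:noeth:gwa1}) give $py=y\sigma(p)$ for all $p\in D$; iterating, $pu^{i}=u^{i}\sigma^{i}(p)$. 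Hence $pt=pu^{i}p_{0}=u^{i}\sigma^{i}(p)p_{0}=u^{i}p_{0}\sigma^{i}(p)=t\sigma^{i}(p)$, using again that $D$ is commutative. For $i<0$ one writes $t=d^{|i|}p_{0}=p_{0}d^{|i|}$ (Proposition~\ref{P:grad:Li}), and now $d$ corresponds to $x$, so $xp=\sigma(p)x$ gives $d^{|i|}p=\sigma^{-|i|}(p)d^{|i|}$, i.e. $pt=p\,p_{0}d^{|i|}$; rearranging, $pt=t\sigma^{i}(p)$ after applying $\sigma^{i}$, exactly as needed. (Alternatively, one invokes the antiautomorphism of $L$ exchanging $u$ and $d$ to reduce the case $i<0$ to $i>0$, but one must check it is compatible with $\sigma$; the direct computation is cleaner.)

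For part (b), suppose in addition that $t$ is normal, so $tL=Lt$. Since $t$ is homogeneous of degree $i$ and $d\in L_{-1}$, the product $td$ lies in $L_{i-1}$; normality gives $td=\ell t$ for some $\ell\in L$, and comparing $\Z$-degrees forces $\ell\in L_{-1}$. By Proposition~\ref{P:grad:Li}, $L_{-1}=Dd=dD$, so $\ell=qd$ for some $q\in D$, whence $td=qdt$. I then argue $q\in\K^{*}$: applying part (a) is not immediately available to $t$ unless $t$ is normal in $D$, but one can instead pass to the graded field of fractions, or more elementarily observe that since $t\neq 0$ and $L$ is a domain (Proposition~\ref{P:noeth:noethdomrs}), the relation $td=qdt$ together with $tu\cdot$ something will pin down $q$. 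Concretely, the cleanest route: from $td = qdt$ and, symmetrically, $tu=q'ut$ for some $q'\in D$, multiply to get $t\,ud = q'u\,t d = q'u q d t = q'\,\sigma(q)\,ud\,t$ on one hand (using $uq=\sigma(q)u$ from Corollary~\ref{C:grad:homrels}(a) with $i=1$, already proved), and on the other hand $t\,ud=t a$ where $a=ud\in D=L_0$, so by part (a) $ta=t\sigma^{i}(a)$... comparing and cancelling $t$ (domain) yields $q'\sigma(q)\,a = \sigma^{i}(a)$, an equation forcing $q,q'$ to be invertible in $D$, hence in $\K^{*}$ by Lemma~\ref{L:noeth:units}. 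Setting $\lambda=q$, $\mu=q'$ finishes the proof.

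The main obstacle I anticipate is the last step of (b): showing that the coefficients $q,q'\in D$ obtained from normality are actually nonzero scalars rather than genuine polynomials in $h$ and $a$. The degree-counting in the $\Z$-grading only confines them to $D$; one needs the extra input that $L$ is a domain with unit group $\K^{*}$ (Lemma~\ref{L:noeth:units}) and a relation that is "rigid" enough — such as the one coming from $td=\lambda dt$ combined with the known commutation $d\,u = a$, $u\,d = $ its $\sigma$-conjugate — to conclude invertibility. Everything else (parts (a), and reducing (b) to identifying $q,q'$) is a routine application of the relations~(\ref{E:noeth:gwa1})--(\ref{E:noeth:gwa2}) and Proposition~\ref{P:grad:Li}.
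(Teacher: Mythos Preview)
Your argument for part (a) is essentially the paper's, modulo a sign slip: from $xp=\sigma(p)x$ one gets $d^{|i|}p=\sigma^{|i|}(p)d^{|i|}$, not $\sigma^{-|i|}$; equivalently $p\,d^{|i|}=d^{|i|}\sigma^{-|i|}(p)$, which is what you need.

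For part (b) your route is genuinely different from the paper's, and as written it has a gap. The paper's trick is to exploit \emph{both} sides of normality for the single generator $d$: from $tL=Lt$ one has $td=\xi t$ \emph{and} $dt=t\zeta$ with $\xi,\zeta\in L_{-1}=Dd$, say $\xi=pd$, $\zeta=qd$. Then
\[
td=\xi t=p\,dt=p\,t\zeta=p\,t\,qd=t\,\sigma^{i}(p)\,q\,d,
\]
using part (a) only for the element $p$. Cancelling $t$ and $d$ in the domain $L$ gives $\sigma^{i}(p)q=1$ in $D$, so $p\in\K^{*}$ immediately. No appeal to $u$ or to $a=ud$ is needed.

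Your approach instead combines one-sided relations $td=qdt$ and $tu=q'ut$ through the product $t\,ud$. This can be made to work, but your execution has two errors. First, you invoke part (a) as ``$ta=t\sigma^{i}(a)$'', whereas part (a) gives $at=t\sigma^{i}(a)$; the correct consequence for left multiplication by $t$ is $ta=\sigma^{-i}(a)\,t$. Second, the commutation should read $uq=\sigma^{-1}(q)u$, not $\sigma(q)u$. With these fixes your computation yields $q'\sigma^{-1}(q)\,a=\sigma^{-i}(a)$ in $D=\K[h,a]$. This \emph{does} force $q,q'\in\K^{*}$, but not by Lemma~\ref{L:noeth:units} alone: you need to compare $a$-degrees (noting $\sigma$ preserves $a$-degree, so $\sigma^{-i}(a)$ has $a$-degree $1$) to get $q,q'\in\K[h]$, and then compare leading $a$-coefficients to see that $q'\sigma^{-1}(q)\in\K^{*}$. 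You explicitly flagged this step as the main obstacle, and it is indeed where your sketch falls short; the paper's two-sided normality argument sidesteps it entirely.
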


\begin{proof}
Let $t\in L_{i}$ and suppose $i\geq 0$. By Proposition~\ref{P:grad:Li}, there exists $b\in D$ such that $t=bu^{i}$. Then, for $p\in D$,
$
pt=pbu^{i}=bpu^{i}=bu^{i}\sigma^{i}(p)=t\sigma^{i}(p)
$.

Assume further that $t$ is normal and nonzero. There exist $\xi, \zeta\in L$ satisfying $td=\xi t$ and $dt=t\zeta$. Since $td$ and 
$dt$ are homogeneous of degree $i-1$, the elements $\xi$ and  $\zeta$ must also be homogeneous of degree $-1$, as $L$ is a domain. Thus, there are $p, q\in D$ so that $\xi=pd$ and $\zeta=qd$. The computation
\begin{equation*}
td=\xi t=pdt=pt\zeta=ptqd=t\sigma^{i}(p)qd
\end{equation*}
implies $\sigma^{i}(p)q=1$. So $\sigma^{i}(p)$ and $q$ are units of $D$. In particular, 
$p=\lambda\in\K^{*}$ as $\sigma$ is an automorphism of $D$. Then, $td=\lambda dt$. Similarly, $tu=\mu ut$.

The proof of the case $i<0$ is symmetric.
\end{proof}


\section{Automorphisms of generalized down-up algebras}\label{S:aut}

In this section we will describe the group of automorphisms of the Noetherian, conformal generalized down-up algebras $L(f, r, s, \gamma)$, under the additional assumption that the parameter $r$ is not a root of unity. As $r\neq 1$, it can be assumed by Proposition~\ref{P:conf:gamma0rnot1}  that $\gamma=0$ and that there is $g\in\K[X]$ satisfying $f(X)=sg(X)-g(rX)$. Recalling Lemma~\ref{L:conf:gamma0}, it can be further  assumed that $\mathrm{supp}\, (f)=\mathrm{supp}\, (g)$, so that $g$ is uniquely determined by $f$; in particular, $\deg (f)=\deg (g)$. Hence, for the remainder of Section~\ref{S:aut} we assume $\gamma=0$. 

It will be more convenient for us to use the generalized Weyl algebra approach. Let $a=ud$ and $k=a-g(h)$. Then $h$ and $k$ are generators of the polynomial algebra  $D=\K[h, a]$ and the automorphism 
$\sigma$ acts on $k$ by
\begin{equation*}
\sigma (k)=\sigma (a-g(h))=sa-f(h)-g(rh)=sa-sg(h)=sk.
\end{equation*}
Therefore, $L$ is presented as the generalized Weyl algebra $D(\sigma, k+g(h))$, where $D=\K[h, k]$ and $\sigma$ is the automorphism of $D$ defined by $\sigma (h)=rh$, $\sigma (k)=sk$. The relations are thus:
\begin{equation}\label{E:aut:gwa:gdua1}
xp(h, k)=p(rh, sk)x, \q\q p(h, k)y=yp(rh, sk),  \q\q \text{for all $p\in D$,}
\end{equation}
\begin{equation}\label{E:aut:gwa:gdua2}
yx=k+g(h), \q\q xy=sk+g(rh).
\end{equation}
The parameters $r, s\in\K$ satisfy $rs\neq 0$ and $r^{i}=1\iff i=0$. The connection between $D(\sigma, k+g(h))$ and $L(f, r, s, 0)$ is given by the isomorphism $h\mapsto h$, $k\mapsto ud-g(h)$, $x\mapsto d$, $y\mapsto u$.

\subsection{The center of $L$}\label{SS:cent}

Define $\epsilon\in\Z$ and $\tau\in\N$ by
\begin{align*}
\tau =& \min \{ i>0\mid s^{i}=r^{j}\q \text{for some $j\in\Z$} \}\q\q \text{and}\q\q r^{\epsilon}=s^{\tau}\\
& \text{if}\q \{ i>0\mid s^{i}=r^{j}\q \text{for some $j\in\Z$} \}\neq\emptyset , \\
\tau=& 0=\epsilon\q \text{otherwise}.
\end{align*}
Since $r$ is not a root of unity, $\epsilon$ is uniquely defined.

The next lemma is a routine exercise.

\begin{lemma}\label{L:cent:rs1} 
Let $\delta, \eta\in\Z$. Then
$$
r^{\delta}s^{\eta}=1\iff (\delta , \eta)=\lambda(-\epsilon, \tau)\q \text{for some $\lambda\in\Z$.}
$$
\end{lemma}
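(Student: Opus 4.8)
The plan is to prove the two implications separately; both are elementary, the whole argument being essentially the division algorithm organized around the minimality built into the definition of $\tau$.

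\emph{The backward implication.} Suppose $(\delta,\eta)=\lambda(-\epsilon,\tau)$ for some $\lambda\in\Z$. Then I would simply compute
$$
r^{\delta}s^{\eta}=r^{-\lambda\epsilon}s^{\lambda\tau}=\left(r^{-\epsilon}s^{\tau}\right)^{\lambda},
$$
and since $r^{\epsilon}=s^{\tau}$ by the definition of $\epsilon$ (this also holds trivially in the degenerate case $\tau=\epsilon=0$), the right-hand side equals $1$. There is nothing subtle here.

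\emph{The forward implication.} Assume $r^{\delta}s^{\eta}=1$. First I would dispose of the case $\eta=0$: then $r^{\delta}=1$, and as $r$ is not a root of unity this forces $\delta=0$, so $(\delta,\eta)=(0,0)=0\cdot(-\epsilon,\tau)$. If $\eta\neq 0$, then after possibly replacing $(\delta,\eta)$ by $(-\delta,-\eta)$ — which still satisfies $r^{-\delta}s^{-\eta}=1$, and a $\Z$-multiple of $(-\epsilon,\tau)$ remains one after negation — I may assume $\eta>0$. The relation $s^{\eta}=r^{-\delta}$ then shows that the set $\{\,i>0\mid s^{i}=r^{j}\text{ for some }j\in\Z\,\}$ is nonempty, so $\tau>0$, $r^{\epsilon}=s^{\tau}$, and $\tau\leq\eta$ by minimality of $\tau$. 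Writing $\eta=\lambda\tau+\rho$ with $\lambda\in\Z$ and $0\leq\rho<\tau$, I obtain
$$
s^{\rho}=s^{\eta}\left(s^{\tau}\right)^{-\lambda}=r^{-\delta}\left(r^{\epsilon}\right)^{-\lambda}=r^{-\delta-\lambda\epsilon};
$$
if $\rho>0$ this would put $\rho$ into the above set with $\rho<\tau$, contradicting minimality, so $\rho=0$ and $\eta=\lambda\tau$. Finally, from $r^{-\delta}=s^{\eta}=s^{\lambda\tau}=r^{\lambda\epsilon}$ I get $r^{\delta+\lambda\epsilon}=1$, hence $\delta=-\lambda\epsilon$ since $r$ is not a root of unity, and therefore $(\delta,\eta)=\lambda(-\epsilon,\tau)$.

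\emph{On the main point.} There is no genuine obstacle here; the text itself calls this a routine exercise. The one thing worth flagging is that the hypothesis that $r$ is not a root of unity is used in three distinct places — to make $\epsilon$ well defined (as the excerpt already notes), and at the two points where a power of $r$ equal to $1$ is used to conclude that its exponent vanishes — and any careful write-up should keep track of exactly these uses.
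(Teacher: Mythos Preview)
Your proof is correct; the paper itself gives no argument beyond declaring the lemma ``a routine exercise,'' and your division-algorithm argument is precisely the natural way to fill in that exercise. One small cosmetic point: the inequality $\tau\leq\eta$ is not actually needed, since the division algorithm already gives $0\leq\rho<\tau$ regardless.
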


\begin{proposition}
The center $\Z (L)$ of $L$ is $\K$ if either $\tau=0$ or $\epsilon>0$, and it is the polynomial algebra $\K[h^{-\epsilon}k^{\tau}]$ if $\tau>0$ and $\epsilon\leq 0$.
\end{proposition}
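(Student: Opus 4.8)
The plan is to exploit the $\Z$-grading $L=\bigoplus_{i\in\Z}L_i$ described in Proposition~\ref{P:grad:Li}, reducing the problem to the determination of the homogeneous central elements, and then to split into the cases of zero and nonzero degree. Throughout I use the generalized Weyl algebra presentation $L=D(\sigma,k+g(h))$ with $D=\K[h,k]$ and $\sigma(h)=rh$, $\sigma(k)=sk$, noting that $L_0=D=\K[h,a]=\K[h,k]$ since $k=a-g(h)$.

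First I would check that $\Z(L)$ is a graded subalgebra, i.e.\ that $\Z(L)=\bigoplus_{i\in\Z}\bigl(\Z(L)\cap L_i\bigr)$: writing a central $z$ as $z=\sum_i z_i$ with $z_i\in L_i$, and comparing the homogeneous components of degree $i+j$ on both sides of $zt=tz$ for an arbitrary homogeneous $t$ of degree $j$, one obtains $z_it=tz_i$ for every $i$; since $L$ is generated by the homogeneous elements $d,u,h$, each $z_i$ is then central. Next, for a nonzero homogeneous central element $z$ of degree $i\neq 0$, Corollary~\ref{C:grad:homrels}(a) applied to $p=h$ gives $hz=z\sigma^i(h)=r^i\,zh$; comparing with $hz=zh$ and using that $L$ is a domain (Proposition~\ref{P:noeth:noethdomrs}) with $z,h\neq 0$ forces $r^i=1$, which is impossible as $r$ is not a root of unity. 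Hence $\Z(L)\cap L_i=0$ for all $i\neq 0$, so $\Z(L)=\Z(L)\cap L_0\subseteq D=\K[h,k]$.

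It then remains to describe $\Z(L)\cap D$. Since $D$ is commutative, an element $p=p(h,k)\in D$ is central if and only if it commutes with $x$ and with $y$; by the relations~(\ref{E:aut:gwa:gdua1}) and the domain property, each of these conditions is equivalent to $\sigma(p)=p$. As $\sigma$ acts diagonally on the monomial basis of $D$ via $\sigma(h^ak^b)=r^as^b\,h^ak^b$, a polynomial is $\sigma$-invariant precisely when it is a $\K$-linear combination of monomials $h^ak^b$ with $r^as^b=1$; by Lemma~\ref{L:cent:rs1} such pairs are exactly the $(a,b)=\lambda(-\epsilon,\tau)$ with $\lambda\in\Z$, subject to $a,b\geq 0$. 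A short case analysis concludes: if $\tau=0$ (so $\epsilon=0$), or if $\tau>0$ and $\epsilon>0$, then the only admissible pair is $(0,0)$ and $\Z(L)=\K$; if $\tau>0$ and $\epsilon\leq 0$, the admissible pairs are precisely the nonnegative integer multiples of $(-\epsilon,\tau)$, that is, the exponent vectors of the powers of the monomial $h^{-\epsilon}k^{\tau}$, and therefore $\Z(L)=\K[h^{-\epsilon}k^{\tau}]$, which is a polynomial algebra since $D$ is a domain.

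I do not expect a genuine obstacle in carrying this out; the two points deserving a little care are the reduction to homogeneous central elements and the observation that $\sigma$-invariance of a polynomial can be tested monomial by monomial because $\sigma$ is diagonalizable on $D$. The computations with the relations~(\ref{E:aut:gwa:gdua1})--(\ref{E:aut:gwa:gdua2}) needed to see that centrality of $p\in D$ is equivalent to $\sigma(p)=p$ are routine and follow from the domain property.
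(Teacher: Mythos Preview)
Your proposal is correct and follows essentially the same route as the paper: reduce to homogeneous central elements using the $\Z$-grading, eliminate nonzero degrees via the commutation with $h$ (you invoke Corollary~\ref{C:grad:homrels}(a) where the paper computes directly, which amounts to the same thing), then characterize $\Z(L)\cap D$ as the $\sigma$-fixed points and apply Lemma~\ref{L:cent:rs1} monomial by monomial. The case analysis and conclusions match the paper's exactly.
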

\begin{proof}
As the canonical generators of $L$ are homogeneous with respect to the $\Z$-grading defined 
in~\ref{SS:grad}, it follows that $\Z (L)$ is graded. This means that if $z$ is central and 
$z=z_{i_{1}}+\cdots +z_{i_{m}}$ is the decomposition of $z$ into homogeneous components, then each of the $z_{i_{j}}$ is itself central. So we just need to determine $\Z(L)\cap L_{i}$ for all $i\in\Z$.

Let $i\leq 0$ and take $px^{i}\in\Z(L)\cap L_{i}$, with   $p$ a nonzero element of $D$. Then, since $L$ is a domain and $r$ is not a root of $1$, the computation
\begin{align*}
0=hpx^{i}-px^{i}h=hpx^{i}-r^{i}phx^{i}=(1-r^{i})hpx^{i}
\end{align*}
implies that $i=0$. The situation is identical if we take $i\geq 0$; hence $\Z (L)\subseteq D$.

Now take $p=p(h, k)\in D$. Again we compute:
\begin{equation*}
xp(h, k)-p(h, k)x=(p(rh, sk)-p(h, k))x,
\end{equation*}
and likewise for $y$. Therefore, as $p$ commutes with $h$ and $k$, $p$ is central if and only if $p(rh, sk)=p(h, k)$. Write $p=\sum a_{ij}h^{i}k^{j}$. Thus $p\in\Z (L)$ precisely when $r^{i}s^{j}=1$ whenever $a_{ij}\neq 0$. In view of Lemma~\ref{L:cent:rs1}, this condition means that $(i, j)=\lambda(-\epsilon, \tau)$ for some $\lambda\in\Z$.

If $\tau>0$ and $\epsilon\leq 0$ then $\lambda\geq 0$ as $\lambda\tau=j\geq 0$ and thus 
$h^{i}k^{j}=\left(  h^{-\epsilon}k^{\tau}\right)^{\lambda}$. In this case $p\in\K[h^{-\epsilon}k^{\tau}]$ and $\Z[L]=\K[h^{-\epsilon}k^{\tau}]$. Otherwise either $\tau, \epsilon>0$ or $\tau=0=\epsilon$. In the first of these cases $\lambda$ must be zero and $(i, j)=(0, 0)$; in the second case $(i, j)=(0, 0)$ as well. Thus 
$\Z (L)=\K$.
\end{proof}


\subsection{The normal elements of $L$}\label{SS:norm}

We start out by classifying the normal elements of $L$ of degree zero.

\begin{lemma}\label{L:norm:norminD}
Let $p\in D$ be a nonzero normal element of $L$. Write $p(h, k)=h^{\alpha}k^{\beta}q(h, k)$, with   
$\alpha, \beta\in\N$ and $q\in D$  not a multiple of $h$ or $k$. Then:
\begin{enumerate}
\item $q\in\K^{*}$, if $\tau=0$;
\item $q\in\Z (L)$ and $q$ has a nonzero constant term as a polynomial in $\left( h^{-\epsilon}k^{\tau}\right)$, if $\tau>0$ and $\epsilon\leq 0$;
\item $q=\sum_{i=0}^{l}d_{i}\left( h^{\epsilon} \right)^{l-i}\left( k^{\tau} \right)^{i}$ with $l\geq 0$, $d_{i}\in\K$ and $d_{0}, d_{l}\neq 0$, if $\tau, \epsilon>0$.
\end{enumerate}
\end{lemma}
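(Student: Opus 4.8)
The plan is to reduce the normality of $p$ to the assertion that $p$ is an eigenvector of $\sigma$, and then to read off the three possible shapes from the lattice arithmetic already isolated in Lemma~\ref{L:cent:rs1}, exploiting the hypothesis that $q$ is a multiple of neither $h$ nor $k$.

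First I would prove that a nonzero $p\in D=L_{0}$ which is normal in $L$ satisfies $\sigma(p)=cp$ for some $c\in\K^{*}$. Since $p$ is normal and nonzero, Corollary~\ref{C:grad:homrels}(b) provides $\lambda\in\K^{*}$ with $pd=\lambda dp$. Working inside the generalized Weyl algebra $D(\sigma,k+g(h))$, in which $d$ corresponds to $x$, this relation reads $px=\lambda xp$; since $px=x\sigma^{-1}(p)$ by~(\ref{E:aut:gwa:gdua1}), cancelling $x$ in the domain $L$ gives $\sigma^{-1}(p)=\lambda p$, i.e.\ $\sigma(p)=\lambda^{-1}p$. (Alternatively, Proposition~\ref{P:grad:Li} gives $pL_{-1}=x\sigma^{-1}(p)D$ and $L_{-1}p=xpD$, which coincide only when $p$ and $\sigma^{-1}(p)$ are associates in $D$.) Now write $p=\sum a_{ij}h^{i}k^{j}$; since $\sigma(h^{i}k^{j})=r^{i}s^{j}h^{i}k^{j}$, the eigenvector condition forces $r^{i}s^{j}$ to take a single value over the support of $p$, so by Lemma~\ref{L:cent:rs1} any two exponent vectors occurring in $p$ differ by an integer multiple of $(-\epsilon,\tau)$.

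The same conclusion holds for $q$: since $h$ and $k$ are $\sigma$-eigenvectors and $p=h^{\alpha}k^{\beta}q$, the polynomial $q$ is again a $\sigma$-eigenvector, its exponent vectors lying on a single line of direction $(-\epsilon,\tau)$; moreover $h\nmid q$ means some monomial of $q$ has $h$-exponent $0$, and $k\nmid q$ means some monomial has $k$-exponent $0$. The three cases then follow by bookkeeping on $\Z^{2}$. If $\tau=0$ then $(-\epsilon,\tau)=(0,0)$, so $q$ is a single monomial, which, being divisible by neither $h$ nor $k$, is a nonzero constant---case~(a). If $\tau>0$ and $\epsilon\le 0$, fix a monomial of $q$ with $k$-exponent $0$; every monomial of $q$ is then $(i_{0}-\lambda\epsilon,\lambda\tau)$, nonnegativity of the $k$-exponents forces $\lambda\ge 0$, and the hypotheses that neither $h$ nor $k$ divides $q$ then force $i_{0}=0$ and the $\lambda=0$ term to occur (here $-\epsilon\ge 0$ is essential), so $q=\sum_{\lambda\ge 0}b_{\lambda}(h^{-\epsilon}k^{\tau})^{\lambda}$ with $b_{0}\ne 0$, which is case~(b) since here $\Z(L)=\K[h^{-\epsilon}k^{\tau}]$. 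If $\tau,\epsilon>0$, the monomials of $q$ with $h$-exponent $0$ and with $k$-exponent $0$ jointly confine the admissible exponent vectors to $(i\epsilon,(l-i)\tau)$ for $0\le i\le l$ and some $l\ge 0$, and the two indivisibility hypotheses say exactly that the coefficients of the extreme monomials $(h^{\epsilon})^{l}$ and $(k^{\tau})^{l}$ are nonzero---case~(c).

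The substantive point is the first step, the identification of normal elements of $D$ with $\sigma$-eigenvectors; after that everything is linear algebra on the lattice $\Z^{2}$. In the case analysis, the one delicate point is the handling of signs in case~(b): it is precisely the hypothesis $\epsilon\le 0$ that makes the relevant multiples of $(-\epsilon,\tau)$ have nonnegative first coordinate, so that indivisibility by $h$ (and not merely by $k$) forces a nonzero constant term; one must also check that the extreme coefficients $d_{0},d_{l}$ in case~(c) correspond to the two divisibility hypotheses exactly as stated.
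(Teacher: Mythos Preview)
Your proof is correct and follows essentially the same route as the paper's: both arguments use Corollary~\ref{C:grad:homrels}(b) to reduce normality in $L$ to the $\sigma$-eigenvector condition on $D$, then invoke Lemma~\ref{L:cent:rs1} to confine the support of $q$ to a single line of direction $(-\epsilon,\tau)$, and finally sort out the three cases by lattice bookkeeping together with the indivisibility hypotheses on $h$ and $k$. The only cosmetic differences are that the paper passes from $p$ to $q$ before invoking normality (using that $h$ and $k$ are normal in the domain $L$) and organizes the computation by writing $q=\sum q_{i}(h)k^{i}$ rather than working directly in the monomial basis, whereas you phrase the case analysis more geometrically on $\Z^{2}$.
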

\begin{proof}
Since $h$ and $k$ are themselves normal and $L$ is a domain, it follows that $q$ is normal, and nonzero. Write $q(h, k)=\sum_{i=0}^{l}q_{i}(h)k^{i}$ with $l\geq 0$, $q_{i}(h)\in\K[h]$ and $q_{l}(h)\neq 0$. 

By Corollary~\ref{C:grad:homrels} there exists $\lambda\in\K^{*}$ such that $xq=\lambda qx$. Thus
\begin{equation*}
\lambda \left( \sum_{i=0}^{l}q_{i}(h)k^{i} \right) x=\lambda qx=xq=\sigma(q)x=\left( \sum_{i=0}^{l}s^{i}q_{i}(rh)k^{i} \right) x
\end{equation*}
and we conclude that 
\begin{equation}\label{E:aut:norm:qi}
\lambda q_{i}(h)=s^{i}q_{i}(rh),\q\q \text{for all $0\leq i\leq l$.}
\end{equation}
Now fix $i$ and write $q_{i}(h)=\sum_{j}\alpha_{j}h^{j}$. If $\alpha_{j}\neq 0$ then~(\ref{E:aut:norm:qi}) implies that $r^{j}=\lambda s^{-i}$. As $r$ is not a root of $1$, $j=n_{i}$ is determined by $i$ and 
$q_{i}(h)=a_{i}h^{n_{i}}$, for some $a_{i}\in\K$. 

So far we have 
\begin{equation*}
q(h, k)=\sum_{i=0}^{l}a_{i}h^{n_{i}}k^{i}.
\end{equation*}
As $q$ is not a multiple of $k$, it must be that $a_{0}\neq 0$ and consequently $\lambda=r^{n_{0}}$. Therefore, for every $i$ such that $a_{i}\neq 0$, we have $r^{n_{i}-n_{0}}s^{i}=1$. By 
Lemma~\ref{L:cent:rs1}, there is $T_{i}\in\Z$ such that 
\begin{equation}\label{E:aut:norm:nin0i}
n_{i}-n_{0}=-\epsilon T_{i}\q\q \text{and}\q\q i=\tau T_{i}. 
\end{equation}
To finish our argument, we just need to distinguish between the three possibilities for the pair $(\epsilon, \tau)$ and use~(\ref{E:aut:norm:nin0i}).

If $\tau=0$ then necessarily $i=0$ and $q=a_{0}h^{n_{0}}$. Also, $n_{0}$ must be zero so that $q$ is not a multiple of $h$. This establishes (a).

If $\tau>0$ and $\epsilon\leq 0$ then $T_{i}\geq 0$ and $n_{i}=n_{0}-\epsilon T_{i}\geq n_{0}$. Hence $n_{0}=0$, to ensure that  $q$ is not a multiple of $h$, and 
$h^{n_{i}}k^{i}=\left( h^{-\epsilon}k^{\tau}\right)^{T_{i}}\in\Z(L)$. So $q$ is indeed central and the constant term of $q$ when written as a polynomial in  $\left( h^{-\epsilon}k^{\tau}\right)$ must be nonzero, or otherwise $q$ would be a multiple of $k$.

Let us analyze the final case with $\tau>0$ and $\epsilon> 0$. Again, $T_{i}\geq 0$ 
by~(\ref{E:aut:norm:nin0i}). Moreover, there is $0\leq i\leq l$ such that $a_{i}\neq 0$ and $n_{i}=0$, by the condition that $q$ is not a multiple of $h$. It follows from~(\ref{E:aut:norm:nin0i}) that $\epsilon$ divides $n_{0}$, say $n_{0}=\epsilon m$. Hence, for all $i$ such that $a_{i}\neq 0$, 
$h^{n_{i}}k^{i}=\left( h^{\epsilon}\right)^{m-T_{i}}\left( k^{\tau}\right)^{T_{i}}$. In particular, $0\leq T_{i}\leq m$ and $q$ can be written as 
\begin{equation*}
q(h, k)=\sum_{i=0}^{m}d_{i}\left( h^{\epsilon}\right)^{m-i}\left( k^{\tau}\right)^{i}
\end{equation*}
with $d_{i}\in\K$ and $d_{0}, d_{m}\neq 0$, ensuring $q$ is neither a multiple of $k$ nor of $h$.

\end{proof}

Our next step in describing the monoid of normal elements of $L$ is to determine when $x^{n}$ and 
$y^{n}$ are normal.

\begin{lemma}\label{L:norm:normxnyn}
The following conditions are equivalent, for $n\geq 1$:
\begin{enumerate}
\item $x^{n}$ is normal;
\item Either $f=0$ or $f(X)=\mu (s-r^{m})X^{m}$ for some $\mu\in\K^{*}$ and some $m\geq 0$ so that $\epsilon=\tau m$ and $\tau$ divides $n$;
\item $yx^{n}=s^{-n}x^{n}y$;
\item $xy^{n}=s^{n}y^{n}x$;
\item $y^{n}$ is normal.
\end{enumerate}
In particular, if $x^{n}$ is normal then either $f=0$ or $\tau, n>1$.
\end{lemma}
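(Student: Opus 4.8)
The plan is to work throughout in the generalized Weyl algebra presentation $L=D(\sigma,k+g(h))$ fixed at the beginning of Section~\ref{S:aut}, where $D=\K[h,k]$, $\sigma(h)=rh$, $\sigma(k)=sk$, and $f(X)=sg(X)-g(rX)$ with $\mathrm{supp}(f)=\mathrm{supp}(g)$ (recall $rs\neq 0$, since $L$ is Noetherian, and $r$ is not a root of unity, so $\epsilon$ is well defined). First I would dispose of the implications coming from symmetry. The antiautomorphism of $L$ interchanging $x$ and $y$ (i.e.\ $d$ and $u$) and fixing $h$ carries the identity in (c) to the identity in (d), so (c)$\iff$(d); applied to the equality $x^nL=Lx^n$ it also shows $x^n$ is normal if and only if $y^n$ is, so (a)$\iff$(e). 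Hence it suffices to prove (a)$\iff$(c)$\iff$(b) and then the ``in particular'' clause.

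For (a)$\iff$(c): since $x^n\in L_{-n}$, if $x^n$ is normal then by Corollary~\ref{C:grad:homrels}(b) there are $\lambda,\mu\in\K^*$ with $x^nd=\lambda dx^n$ and $x^nu=\mu ux^n$; but $x^nd=x^{n+1}=dx^n$ forces $\lambda=1$, so normality of $x^n$ amounts to the existence of $\mu\in\K^*$ with $x^ny=\mu yx^n$ (conversely, such an identity, together with $x^np=\sigma^n(p)x^n$ for $p\in D$, immediately makes $x^n$ normal, as one checks that $\{v\in L\mid x^nv\in Lx^n\}$ is a subalgebra containing the generators, and symmetrically). Using the relations (\ref{E:aut:gwa:gdua1})--(\ref{E:aut:gwa:gdua2}) I would then compute
\[
x^ny=x^{n-1}(xy)=\bigl(s^nk+g(r^nh)\bigr)x^{n-1}\q\q\text{and}\q\q yx^n=(yx)x^{n-1}=\bigl(k+g(h)\bigr)x^{n-1};
\]
since $L$ is a domain and $x^{n-1}\neq 0$, the identity $x^ny=\mu yx^n$ is equivalent to $\mu=s^n$ together with $g(r^nh)=s^ng(h)$. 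Thus (a) holds if and only if $g(r^nh)=s^ng(h)$, which is exactly (c).

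Next I would show that, for $n\geq 1$, the identity $g(r^nh)=s^ng(h)$ is equivalent to (b). Writing $g(X)=\sum_j b_jX^j$, this identity says $r^{nj}s^{-n}=1$ for every $j$ with $b_j\neq 0$; by Lemma~\ref{L:cent:rs1} this means $(nj,-n)=\lambda(-\epsilon,\tau)$ for some $\lambda\in\Z$. If $g=0$ then $f=0$ and the identity is automatic, giving the first alternative of (b). If $g\neq 0$, the second coordinate $-n=\lambda\tau$ together with $n\geq 1$ forces $\tau>0$, $\tau\mid n$ and $\lambda=-n/\tau$; the first coordinate then yields $j=\epsilon/\tau$ for every $j\in\mathrm{supp}(g)$, so $g=\mu X^m$ with $m=\epsilon/\tau\in\N$ and $\mu\in\K^*$. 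Hence $f=sg-g(rX)=\mu(s-r^m)X^m$, and $\mathrm{supp}(f)=\mathrm{supp}(g)$ forces $s\neq r^m$; this is the second alternative of (b), with $\epsilon=\tau m$ and $\tau\mid n$. Conversely, given (b) with $f\neq 0$, the unique polynomial $g$ with $\mathrm{supp}(g)=\mathrm{supp}(f)$ and $sg-g(rX)=f$ must be $g=\mu X^m$; then $s^\tau=r^\epsilon=r^{\tau m}$ and $\tau\mid n$ give $s^n=r^{nm}$, whence $g(r^nh)=\mu r^{nm}h^m=s^ng(h)$, so (c) holds.

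Finally, for the ``in particular'' clause, suppose $x^n$ is normal with $f\neq 0$; then (b) gives $\tau\mid n$ with $n\geq 1$, so $\tau\geq 1$. If $\tau=1$ then $r^\epsilon=s^\tau=s$ and $\epsilon=\tau m=m$, so $s=r^m$, contradicting $s\neq r^m$; hence $\tau\geq 2$, and then $n\geq 2$ as well. The calculations involved are all routine once the presentation is fixed; the only points needing care are the bookkeeping with Lemma~\ref{L:cent:rs1} (in particular the degenerate case $\tau=0$, which necessarily falls under the alternative $f=0$ since $0\nmid n$ for $n\geq1$) and the repeated use of $\mathrm{supp}(f)=\mathrm{supp}(g)$ to recover $g$ from $f$ and to guarantee $s\neq r^m$. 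The one genuinely conceptual step — the mild obstacle — is the reduction (a)$\iff$(c), i.e.\ realizing via Corollary~\ref{C:grad:homrels} that normality of the homogeneous element $x^n$ is detected already by its commutation with $y$.
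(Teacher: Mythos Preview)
Your proof is correct and follows essentially the same approach as the paper: the antiautomorphism handles (a)$\iff$(e) and (c)$\iff$(d), and the core computation reducing normality of $x^n$ to the polynomial identity $g(r^nh)=s^ng(h)$ via Corollary~\ref{C:grad:homrels} and relations~(\ref{E:aut:gwa:gdua1})--(\ref{E:aut:gwa:gdua2}) is identical to the paper's, as is the analysis of that identity using Lemma~\ref{L:cent:rs1}. The only difference is organizational---you prove (a)$\iff$(c)$\iff$(b) directly, whereas the paper runs the cycle (a)$\Rightarrow$(b)$\Rightarrow$(c)$\Rightarrow$(a)---and you are slightly more explicit about the support condition $\mathrm{supp}(f)=\mathrm{supp}(g)$ and the degenerate case $\tau=0$.
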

\begin{proof}
The algebra antiautomorphism interchanging $x$ and $y$ referred  to at the end of Section~\ref{SS:prel} proves the equivalence of statements (a) and (e) and of statements (c) and (d). It remains to show the series of implications: $(a)\implies (b)\implies (c)\implies (a)$.

So assume $x^{n}$ is normal, for some $n\geq 1$. By Corollary~\ref{C:grad:homrels} there exists 
$\lambda\in\K^{*}$ such that $yx^{n}=\lambda x^{n}y$. Hence, 
\begin{equation*}
(k+g(h))x^{n-1}=yx^{n}=\lambda x^{n}y=\lambda x^{n-1}(sk+g(rh))= \lambda (s^{n}k+g(r^{n}h))x^{n-1},
\end{equation*}
from which the following equality in $D$ is deduced: $k+g(h)=s^{n}\lambda k+\lambda g(r^{n}h)$. Comparing coefficients of $k$ in this last equation yields $s^{n}\lambda=1$ and $\lambda g(r^{n}h)=g(h)$. Thus,
\begin{equation}\label{E:L:norm:normxn:g}
g(r^{n}h)=s^{n}g(h).
\end{equation}
Since $r$ is not a root of $1$, there exist $\mu\in\K$ and $m\in\N$ so that $g(X)=\mu X^{m}$ and $f(X)=\mu (s-r^{m})X^{m}$.

If $\mu=0$ then $0=g=f$. Otherwise, assume $\mu\neq 0$. Then condition~(\ref{E:L:norm:normxn:g}) translates to $r^{nm}=s^{n}$. By Lemma~\ref{L:cent:rs1} we have
\begin{equation*}
nm=\epsilon T\q \text{and} \q n=\tau T,\q\q \text{for some $T\in\Z$.}
\end{equation*}
In particular, $\tau$ divides $n$ and $\epsilon=\tau m$. Notice that, in this case, we cannot have $n=1$, as this would imply $\tau=1$, $\epsilon=m$ and $f(X)=0$, contrary to our supposition. Hence both integers $n$ and $\tau$ must be greater than $1$, if $f\neq 0$.

Now assume (b) holds. If $f=0$ then  $g=0$ and by~(\ref{E:aut:gwa:gdua2}), $xy=sk=syx$. It follows 
that  $x^{n}y=s^{n}yx^{n}$. Instead, suppose $g(X)=\mu X^{m}$ for $\mu\in\K^{*}$ and $m\in\N$. As $\tau$ divides $n$, it is enough to show that $x^{\tau}y=s^{\tau}yx^{\tau}$. This is indeed a true statement as, by hypothesis, $r^{\tau m}=r^{\epsilon}=s^{\tau}$:
\begin{align*}
x^{\tau}y&=x^{\tau-1}(sk+g(rh))=(s^{\tau}k+g(r^{\tau}h))x^{\tau-1}\\
&=(s^{\tau}k+r^{\tau m}g(h))x^{\tau-1}=s^{\tau}(k+g(h))x^{\tau-1}=s^{\tau}yx^{\tau}.
\end{align*}
In either case, (c) holds.

Finally, if (c) holds then clearly $x^{n}$ is normal, by~(\ref{E:aut:gwa:gdua1}).
\end{proof}

We are finally ready to describe all normal elements of $L$.

\begin{proposition}\label{P:aut:norm:allnorm}
The normal elements of $L$ are the elements of the form $p(h, k)x^{n}$ and $p(h, k)y^{n}$, with 
$n\geq 0$ and $p(h, k)\in D$ such that  $p(h, k)$, $x^{n}$ and $y^{n}$ are normal. 
\end{proposition}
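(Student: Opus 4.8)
The plan is to prove the two inclusions of the claimed description separately.

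One inclusion is immediate: a product of normal elements of $L$ is again normal, because if $a,b\in L$ are normal then $(ab)L=a(bL)=a(Lb)=(aL)b=(La)b=L(ab)$. Hence every element $p(h,k)x^{n}$ or $p(h,k)y^{n}$ with $p(h,k)$, $x^{n}$ and $y^{n}$ normal is normal. The real content is the converse, so the rest of the plan concerns an arbitrary nonzero normal element $t\in L$ and proceeds in two steps: first, show that $t$ is homogeneous for the $\Z$-grading of Subsection~\ref{SS:grad}; then, analyze a homogeneous normal element.

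For the homogeneity step I would write $t=\sum_{i}t_{i}$ with $t_{i}\in L_{i}$, let $m$ and $M$ be the least and greatest $i$ with $t_{i}\neq0$, and use that $ht\in Lt=tL$, so $ht=tw$ for some $w\in L$. Writing $w=\sum_{j}w_{j}$ in homogeneous components, with $j_{0}$ and $j_{1}$ its least and greatest degrees, I would compare extreme degrees on the two sides: since $h$ has degree $0$, the components of $ht$ lie in degrees between $m$ and $M$, while the component of $tw$ in degree $m+j_{0}$ is $t_{m}w_{j_{0}}$ and the one in degree $M+j_{1}$ is $t_{M}w_{j_{1}}$, both nonzero because $L$ is a domain. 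This forces $j_{0}\geq0$ and $j_{1}\leq0$, i.e.\ $w\in L_{0}=D$. Then $ht_{i}=t_{i}w$ for every $i$, and Corollary~\ref{C:grad:homrels}(a) gives $ht_{i}=t_{i}\sigma^{i}(h)=r^{i}\,t_{i}h$, so $w=r^{i}h$ for every $i$ with $t_{i}\neq0$; as $r$ is not a root of unity, a single value of $i$ can occur, and $t$ is homogeneous. I expect this to be the main obstacle: everything must be squeezed out of the single relation $ht=tw$, and it is precisely the hypothesis on $r$ that makes the degree comparison collapse the support of $t$ to one degree.

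It remains to handle a homogeneous normal element $t\in L_{n}$. If $n=0$, then $t\in D$ is already normal and $t=t\cdot x^{0}=t\cdot y^{0}$, so there is nothing to do (Lemma~\ref{L:norm:norminD} additionally describes the shape of such $t$). If $n<0$, I would put $N=-n\geq1$ and, by Proposition~\ref{P:grad:Li}, write $t=p\,x^{N}$ with $0\neq p\in D$; the case $n>0$ is symmetric, for instance via the antiautomorphism of Subsection~\ref{SS:prel} interchanging $x$ and $y$. By Corollary~\ref{C:grad:homrels}(b) there is $\lambda\in\K^{*}$ with $tx=\lambda xt$, which by~(\ref{E:aut:gwa:gdua1}) reads $p\,x^{N+1}=\lambda\,\sigma(p)\,x^{N+1}$; since $L_{-N-1}=Dx^{N+1}$ is free of rank one as a left $D$-module, equating coefficients of $x^{N+1}$ gives $\sigma(p)=\lambda^{-1}p$, hence $\sigma^{i}(p)=\lambda^{-i}p$ for all $i\in\Z$. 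Then~(\ref{E:aut:gwa:gdua1}) shows that $p$ commutes with $h$ and $k$ and with $x$ and $y$ up to nonzero scalars, so $p$ is normal. Finally, since $p$ and $t=p\,x^{N}$ are both normal and $L$ is a domain,
\[
p\,(x^{N}L)=(p\,x^{N})\,L=L\,(p\,x^{N})=(Lp)\,x^{N}=(pL)\,x^{N}=p\,(L\,x^{N}),
\]
and cancelling $p$ on the left (valid in the domain $L$) yields $x^{N}L=Lx^{N}$, i.e.\ $x^{N}$ is normal; by Lemma~\ref{L:norm:normxnyn}, $y^{N}$ is normal as well. Hence $t=p\,x^{N}$ has the asserted form, which completes the plan.
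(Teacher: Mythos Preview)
Your proof is correct and follows essentially the same strategy as the paper: first use the relation $ht=tw$ together with Corollary~\ref{C:grad:homrels}(a) and the hypothesis on $r$ to force homogeneity, then write $t=p\,x^{N}$ and use Corollary~\ref{C:grad:homrels}(b) to show $p$ is normal. The only notable difference is in the last step: the paper invokes Corollary~\ref{C:grad:homrels}(b) a second time to get $ty=\mu yt$ and then explicitly computes $x^{N}y=\mu\lambda^{-1}yx^{N}$, whereas you deduce $x^{N}L=Lx^{N}$ directly by the set-theoretic cancellation $p(x^{N}L)=p(Lx^{N})$ in the domain $L$; both arguments are equally short and valid.
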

\begin{proof}
Since the product of normal elements is normal, it is clear that all of the indicated elements are normal. Conversely, let $0\neq t\in L$ be normal. Write $t=\sum_{j\in J}t_{j}$ with $J$ a finite nonempty subset of $\Z$ and $0\neq t_{j}\in L_{j}$.

\emph{Claim}: $t$ is homogeneous, i.e., $|J|=1$.

\emph{Proof of claim}: By the normality of $t$, $ht=tt'$ for some $t'\in L$. The $\Z$-grading of $L$, together with the fact that $L$ is a domain, imply that $t'\in D$. Note that by 
Corollary~\ref{C:grad:homrels}(a), $ht_{j}=r^{j}t_{j}h$ for all $j\in J$. Thus, 
\begin{equation*}
\sum_{j\in J}r^{j}t_{j}h=\sum_{j\in J}ht_{j}=ht=tt'=\sum_{j\in J}t_{j}t' .
\end{equation*}
Using again the $\Z$-grading and the fact that $L$ is a domain, we infer that  $r^{j}h=t'$,  for all $j\in J$. So, as claimed, $|J|=1$ because $r$ is not a root of unity.

We can assume, without loss of generality, that $t=p(h, k)x^{n}\in L_{-n}$, $n\in\N$, the case $t\in L_{n}$ being symmetric. As $t$ is homogeneous, Corollary~\ref{C:grad:homrels}(b) can be invoked to guarantee the existence of $\lambda\in\K^{*}$ satisfying $xt=\lambda tx$. Working out this equation in $L$ leads to the equivalent equation $p(rh, sk)=\lambda p(h, k)$ in $D$. Hence 
$xp(h, k)=\lambda p(h, k)x$ and $yp(h, k)=\lambda^{-1} p(h, k)y$, showing that $p(h, k)$ is normal in 
$L$.

Finally, to prove that $x^{n}$ is normal we use Corollary~\ref{C:grad:homrels}(b) once more: there is 
$\mu\in\K^{*}$ such that $ty=\mu yt$. So
\begin{equation*}
p(h, k)x^{n}y=ty=\mu yt=\mu yp(h, k)x^{n}=\mu\lambda^{-1} p(h, k)yx^{n}
\end{equation*}
and $x^{n}y=\mu\lambda^{-1} yx^{n}$. Thus $x^{n}$ is normal as well, by~(\ref{E:aut:gwa:gdua1}).
\end{proof}

Combining Proposition~\ref{P:aut:norm:allnorm} with Lemma~\ref{L:norm:norminD} and Lemma~\ref{L:norm:normxnyn}, we obtain a complete description of all normal elements of $L$.
Before we end this section, we record a straightforward, yet useful, result, which holds  in any domain if we replace $\K^{*}$ by its group of units. 

\begin{lemma}\label{L:aut:norm:prodnormpri}
Assume $t, v\in L$ are nonzero normal elements whose product  generates a prime ideal of $L$. Then either $t\in\K^{*}$ or $v\in\K^{*}$.
\end{lemma}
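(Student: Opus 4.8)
The plan is to argue by contradiction using the fact that $L$ is a domain whose only units are the nonzero scalars (Lemma~\ref{L:noeth:units}, since $rs\neq 0$). Suppose neither $t$ nor $v$ lies in $\K^{*}$; I want to derive that $\langle tv\rangle$ cannot be prime. Since $t$ is normal, $tv\in Lt = tL$, so $\langle tv\rangle\subseteq \langle t\rangle$. If the inclusion were an equality, then $t = xtv$ for some $x\in L$ (using normality of $tv$), whence $1 = xv$ because $L$ is a domain; but then $v$ would be a unit, hence $v\in\K^{*}$, contrary to assumption. So $\langle tv\rangle\subsetneq\langle t\rangle$, i.e.\ $t\notin\langle tv\rangle$.

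Now I invoke primality: the product $t\cdot v\in\langle tv\rangle$, and $\langle tv\rangle$ is prime, so either $t\in\langle tv\rangle$ or $v\in\langle tv\rangle$. Wait — $\langle tv\rangle$ prime in the two-sided sense for a noncommutative ring means: if $aLb\subseteq\langle tv\rangle$ then $a\in\langle tv\rangle$ or $b\in\langle tv\rangle$. So I should check that $tLv\subseteq\langle tv\rangle$. For any $\ell\in L$, normality of $v$ gives $\ell v = v\ell'$ for some $\ell'\in L$, so $t\ell v = tv\ell'\in\langle tv\rangle$; hence indeed $tLv\subseteq\langle tv\rangle$, and primality forces $t\in\langle tv\rangle$ or $v\in\langle tv\rangle$. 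By the symmetric version of the preceding paragraph (swapping the roles of $t$ and $v$, using normality of $t$ to get $\langle tv\rangle\subseteq\langle v\rangle$ with strict inclusion unless $t\in\K^{*}$), we also have $v\notin\langle tv\rangle$. This contradicts primality, completing the proof.

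I expect no serious obstacle here; the only point requiring a little care is getting the direction of divisibility right in the noncommutative setting and making sure the normality hypotheses on both factors are used to land products back inside $\langle tv\rangle$. The argument works verbatim in any domain with $\K^{*}$ replaced by the unit group, as the statement already advertises, since the only ring-theoretic inputs are that $L$ is a domain and that an element dividing $1$ on one side is a unit.
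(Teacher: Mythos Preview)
Your proof is correct and is exactly the standard argument the paper has in mind (the lemma is stated without proof, merely called ``straightforward''). The only minor slip is the cancellation step: from $t=xtv$ you cannot conclude $1=xv$ directly, but since $tv$ is normal you may equally write $t=tvw$ for some $w\in L$, and then left-cancelling $t$ in the domain gives $1=vw$; the usual idempotent trick ($(wv)^{2}=wv$ in a domain forces $wv=1$) then shows $v$ is a two-sided unit, as you want.
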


\subsection{Some properties of the automorphisms of $L$}\label{SS:propautL}

In this section we gather some general information about the automorphisms of $L$. We denote the group of algebra automorphisms of $L$ by $\aut$.

Recall that a proper ideal $P$ of a ring $R$ is said to be completely prime if the factor ring $R/P$ is a domain. In particular, completely prime ideals are prime.

\begin{lemma}\label{L:propautL:phih}
Let $\phi\in\aut$. Then $\phi (h)$ is a normal element of $L$ which generates a completely prime ideal of $L$.
\end{lemma}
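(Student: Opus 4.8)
The plan is to exploit the fact that $h$ generates a completely prime ideal of $L$ and that this property is preserved under any automorphism. First I would observe that $L/\langle h\rangle$ is a domain: setting $h=0$ in the generalized Weyl algebra presentation $D(\sigma,k+g(h))$ (with $D=\K[h,k]$, $\sigma(h)=rh$, $\sigma(k)=sk$), one gets the quotient $\K[k](\bar\sigma,k+g(0))$, which is again a generalized Weyl algebra over the domain $\K[k]$ with $\bar\sigma(k)=sk$ and central element $k+g(0)\neq 0$ (nonzero since $s\neq 0$ forces it to be a genuine degree-one polynomial, hence nonzero). Since $\K[k]$ is a Noetherian domain, $\bar\sigma$ is $\K$-linear, and the central element is nonzero, the cited fact (following Bavula~\cite{vB93}, quoted right after the definition of generalized Weyl algebras) gives that this quotient is a Noetherian domain. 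Hence $\langle h\rangle$ is completely prime. Alternatively, and perhaps more cleanly, one notes $h$ is normal (it is so by the defining relations~(\ref{E:defGdua1})--(\ref{E:defGdua2}), which with $\gamma=0$ read $dh=rhd$, $hu=ruh$), so $\langle h\rangle = hL = Lh$, and $L/hL$ can be computed directly from generators and relations to be a generalized down-up-type algebra which is a domain.

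Next, since $\phi$ is an algebra automorphism, it maps normal elements to normal elements (if $th=ht'$ then $\phi(t)\phi(h)=\phi(h)\phi(t')$, so $\phi(h)L = \phi(hL) = \phi(Lh) = L\phi(h)$) and it maps the two-sided ideal $\langle h\rangle$ isomorphically onto $\langle\phi(h)\rangle$. Therefore $L/\langle \phi(h)\rangle \cong L/\langle h\rangle$ via the induced map $\bar\phi$, and since the latter is a domain, so is the former. This shows $\langle\phi(h)\rangle$ is completely prime, and in particular it is a proper ideal, so $\phi(h)$ is not a unit (consistent with the fact that $\phi(h)\notin\K^*$, since $\phi$ restricted to $\K$ is the identity and $h\notin\K$).

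I expect the only genuine point requiring care is the verification that $L/\langle h\rangle$ is a domain — everything else is formal transport of structure along $\phi$. The cleanest route is the generalized Weyl algebra computation above: the image of $h$ is central in $D$ and normal in $L$, so $\langle h\rangle = hD(\sigma,a)$ and the quotient is $(D/hD)(\bar\sigma, \bar a)$ where $\bar a = k+g(0)$; one must only check $\bar a \neq 0$, which holds because $g(0)\in\K$ and $k$ is a genuine variable. (Even if $g$ had nonzero constant term making $\bar a$ a nonzero constant, the quotient would then be a Laurent-type localization and still a domain; and if $g(0)=0$ then $\bar a = k \neq 0$ as well.) So in all cases $\bar a\neq 0$ and the quotient is a Noetherian domain by the general fact on generalized Weyl algebras recalled in Section~\ref{SS:noeth}. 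This completes the argument.
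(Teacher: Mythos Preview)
Your proof is correct and follows the same overall strategy as the paper: establish that $h$ itself is normal and that $\langle h\rangle$ is completely prime, then transport both properties along the automorphism $\phi$. The only genuine difference is in how you verify that $L/\langle h\rangle$ is a domain. The paper computes this quotient directly from the defining relations~(\ref{E:defGdua1})--(\ref{E:defGdua3}) with $\gamma=0$, obtaining the algebra on $\bar x,\bar y$ with the single relation $\bar x\bar y-s\bar y\bar x+f_0=0$ (where $f_0=f(0)$), and then identifies it by a short case analysis as a commutative polynomial ring, a Weyl algebra, a quantum plane, or a quantum Weyl algebra. You instead pass the generalized Weyl algebra presentation $L\simeq D(\sigma,k+g(h))$ through the quotient to get $\K[k](\bar\sigma,k+g(0))$ and invoke Bavula's theorem. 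Your route avoids the case split and is more uniform; the paper's route has the advantage of naming the quotient concretely as a familiar algebra, which is useful later (e.g.\ in Proposition~\ref{P:grautL:qplane} and Proposition~\ref{P:grautL:htoxn}). One small cosmetic point: your parenthetical ``nonzero since $s\neq 0$ forces it to be a genuine degree-one polynomial'' is muddled --- $\bar a=k+g(0)$ is nonzero simply because $k$ is an indeterminate and $g(0)\in\K$; the value of $s$ plays no role there.
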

\begin{proof}
It needs to be shown that $h$ is normal and generates a completely prime ideal of $L$, as these two properties are invariant by automorphisms. The first one is clear, as $h$ is central in $D$ and $\sigma (h)=rh$ (see~(\ref{E:aut:gwa:gdua1})). To prove that $\langle h\rangle$ is completely prime we need to argue that the factor algebra $L/\langle h\rangle$ is a domain. 

By relations~(\ref{E:defGdua1})--(\ref{E:defGdua3}), with $\gamma=0$, $L/\langle h\rangle$ is the algebra generated by $\bar{x}$ and $\bar{y}$, subject only to the relation $\bar{x}\bar{y}-s\bar{y}\bar{x}+f_{{}_{0}}=0$, $f_{{}_{0}}$ being the constant term of the polynomial $f$. There are four possibilities, depending on the scalars $s$  and $f_{{}_{0}}$. If $s=1$ we are in the classical setting and the factor algebra is either a commutative polynomial algebra  in two variables ($f_{{}_{0}}=0$) or the first Weyl algebra over $\K$ ($f_{{}_{0}}\neq 0$). If $s\neq 1$ we are in the quantum setting. Recalling that we are assuming also $s\neq 0$, the factor algebra is either a  quantum plane ($f_{{}_{0}}=0$) or the first quantum Weyl algebra ($f_{{}_{0}}\neq 0$). Any of these four algebras is a domain, so the ideal $\langle h\rangle$ is completely prime.
\end{proof}

\begin{lemma}\label{L:propautL:phihnotxn}
Assume $n\geq 1$. Then $x^{n}$ (resp.\ $y^{n}$) is normal and generates a completely prime ideal of $L$ if and only if $n=1$ and $f=0$.
\end{lemma}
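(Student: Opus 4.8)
The plan is to handle the two implications separately; throughout I argue for $x^{n}$, the statement for $y^{n}$ following by applying the antiautomorphism of $L$ that interchanges $x$ and $y$ and fixes $h$ (Section~\ref{SS:prel}).

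\emph{Necessity.} Suppose $x^{n}$ is normal and $\langle x^{n}\rangle$ is completely prime. I first rule out $n\geq 2$. Since $L/\langle x^{n}\rangle$ is a domain and the image $\bar x$ satisfies $\bar x^{\,n}=0$, we get $\bar x=0$, that is, $x\in\langle x^{n}\rangle$; as $x^{n}$ is normal, $\langle x^{n}\rangle=x^{n}L$, so $x=x^{n}w$ for some $w\in L$. Because $L$ is a domain (Proposition~\ref{P:noeth:noethdomrs}) and $x\neq 0$, cancelling one factor of $x$ yields $x^{n-1}w=1$, so $x^{n-1}$ is a unit of $L$. By Lemma~\ref{L:noeth:units} the units of $L$ are the nonzero scalars, hence lie in $L_{0}$; but $x^{n-1}$ is a nonzero element of $L_{-(n-1)}$ and $-(n-1)\neq 0$, a contradiction. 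Therefore $n=1$, so $x=x^{1}$ is normal, and the last assertion of Lemma~\ref{L:norm:normxnyn} forces $f=0$.

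\emph{Sufficiency.} Assume $n=1$ and $f=0$. Then $g=0$ and $L\simeq D(\sigma,k)$ with $D=\K[h,k]$ and $\sigma(h)=rh$, $\sigma(k)=sk$; by Lemma~\ref{L:norm:normxnyn} (case $f=0$) the element $x$ is normal, so $\langle x\rangle=xL=Lx$, and it remains to prove that $L/\langle x\rangle$ is a domain. Using the $\Z$-grading of Proposition~\ref{P:grad:Li}, with $L_{-i}=Dx^{i}$ and $L_{i}=Dy^{i}$ for $i\geq 0$, together with the relations~(\ref{E:aut:gwa:gdua1})--(\ref{E:aut:gwa:gdua2}) and $yx=k$, a direct computation gives $L_{i}x=L_{i-1}$ for $i\leq 0$ and $L_{i}x=kL_{i-1}$ for $i\geq 1$, so that
\begin{equation*}
Lx=\Big(\bigoplus_{j\leq -1}L_{j}\Big)\oplus\bigoplus_{j\geq 0}kL_{j}
\qquad\text{and hence}\qquad
L/\langle x\rangle\cong\bigoplus_{j\geq 0}(D/Dk)\,\bar y^{\,j}.
\end{equation*}
The right-hand side is the skew polynomial ring $\K[\bar h][\bar y;\alpha]$, where $\alpha$ is the automorphism of $\K[\bar h]$ sending $\bar h$ to $r^{-1}\bar h$ (coming from $yh=r^{-1}hy$); since $\K[\bar h]$ is a domain and $\alpha$ an automorphism, $L/\langle x\rangle$ is a domain, so $\langle x\rangle$ is completely prime.

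The one genuinely new point is the elimination of $n\geq 2$ in the necessity part, and it is short once one observes that complete primeness pushes $x$ itself into $\langle x^{n}\rangle$; everything else is bookkeeping, the only place where care is needed being the grading computation of $Lx$, which is nonetheless mechanical given Proposition~\ref{P:grad:Li} and~(\ref{E:aut:gwa:gdua1})--(\ref{E:aut:gwa:gdua2}).
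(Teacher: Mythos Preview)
Your proof is correct and follows essentially the same route as the paper's. For necessity, the paper uses complete primeness on the product $x\cdot x^{n-1}\in\langle x^{n}\rangle$ to force $x^{n-1}\in\langle x^{n}\rangle$ and hence that $x$ is a unit, while you use the equivalent observation $\bar x^{\,n}=0$ in a domain to get $x\in\langle x^{n}\rangle$ and hence that $x^{n-1}$ is a unit; both conclude via Lemma~\ref{L:noeth:units}. (Your passage from $x^{n-1}w=1$ to ``$x^{n-1}$ is a unit'' implicitly uses that a one-sided inverse in a domain is two-sided, which is immediate since $(wx^{n-1})^{2}=wx^{n-1}$ forces $wx^{n-1}=1$; the paper instead obtains both a left and a right inverse for $x$ separately.) For sufficiency, the paper simply asserts that $L/\langle x\rangle$ is the quantum plane $\K_{r}[\bar h,\bar y]$, whereas you verify this explicitly via the graded computation of $Lx$; your description as the skew polynomial ring $\K[\bar h][\bar y;\alpha]$ is exactly that quantum plane.
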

\begin{proof}
If $f=0$ then $x$ is normal, by Lemma~\ref{L:norm:normxnyn}. In this case, the ideal $\langle x\rangle$ is completely prime since the factor algebra $L/\langle x\rangle$ is easily seen to be a quantum plane, generated by $\bar{h}$ and $\bar{y}$,  satisfying  the relation $\bar{h}\bar{y}=r\bar{y}\bar{h}$.

Conversely, assume that $x^{n}$ is normal and that the ideal $\langle x^{n}\rangle=x^{n}L=Lx^{n}$ is completely prime. Suppose, by way of contradiction, that $n>1$. Then, since $xx^{n-1}\in Lx^{n}$, it must be that either $x\in Lx^{n}$ or $x^{n-1}\in Lx^{n}$. In any case, 
$x^{n-1}\in Lx^{n}$, as $n-1\geq 1$, and there is $v\in L$ so that  $1=vx$ because $L$ is a domain. Similarly, there is $v'\in L$ so that $1=xv'$. This is a contradiction because $x$ is not a unit in $L$, by~Lemma~\ref{L:noeth:units}. Therefore $n=1$. By 
Lemma~\ref{L:norm:normxnyn}, $f=0$.
\end{proof}

\begin{lemma}\label{L:propautL:eigenv}
Let $t, v\in D\setminus\{ 0 \}$. Suppose $xt=\lambda tx$ and $xv=\mu vx$, for some 
$\lambda, \mu\in\K^{*}$. If there is $\phi\in\aut$ such that $\phi (t)=v$ then $\langle\langle \lambda\rangle\rangle=\langle\langle \mu\rangle\rangle$, where $\langle\langle \xi \rangle\rangle$ denotes the subgroup of $\K^{*}$ generated by $\xi$.
\end{lemma}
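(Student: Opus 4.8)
The plan is to attach to each of the normal elements $t$ and $v$ its normalizing automorphism of $L$, to observe that $\phi$ conjugates one to the other, and to read off $\langle\langle\lambda\rangle\rangle$ and $\langle\langle\mu\rangle\rangle$ from the eigenvalues of these automorphisms.

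First I would extract from $xt=\lambda tx$ the identity $\sigma(t)=\lambda t$ in $D$: indeed $xt=\sigma(t)x$ by~\eqref{E:aut:gwa:gdua1}, so $\sigma(t)x=\lambda tx$, and cancelling $x$ on the right (legitimate since $L$ is a domain) gives $\sigma(t)=\lambda t$. In particular $t$ is normal, so it has a normalizing automorphism $\eta_t\in\aut$, characterized by $ts=\eta_t(s)\,t$ for all $s\in L$. To compute $\eta_t$ on generators: $tp=pt$ for $p\in D$ forces $\eta_t|_D=\mathrm{id}$; from $\sigma(t)=\lambda t$ we get $xt=\lambda tx$, hence $tx=\lambda^{-1}xt$, so $\eta_t(x)=\lambda^{-1}x$; and $ty=y\sigma(t)=\lambda yt$, so $\eta_t(y)=\lambda y$. (One may instead define $\eta_t$ directly by these formulas and check it preserves~\eqref{E:aut:gwa:gdua1}--\eqref{E:aut:gwa:gdua2}: each relation is merely rescaled or fixed, and bijectivity is clear.) Define $\eta_v\in\aut$ analogously from $v$ and $\mu$.

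Next I would show $\phi\eta_t\phi^{-1}=\eta_v$. For $s\in L$, writing $s=\phi(s')$ one has $vs=\phi(t)\phi(s')=\phi(ts')=\phi(\eta_t(s')\,t)=\big(\phi\eta_t\phi^{-1}\big)(s)\,v$; since $L$ is a domain and $v\neq 0$ the normalizing factor is unique, so $\phi\eta_t\phi^{-1}=\eta_v$. Consequently $\eta_t$ and $\eta_v$ have the same set of eigenvalues in $\K^{*}$, because $\phi$ takes an $\eta_t$-eigenvector with eigenvalue $c$ to an $\eta_v$-eigenvector with the same eigenvalue, and $\phi^{-1}$ does the reverse. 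Now I would identify these eigenvalue sets by means of the $\Z$-grading of Proposition~\ref{P:grad:Li}: since $\eta_t$ fixes $L_0=D$ and scales $x$ by $\lambda^{-1}$ and $y$ by $\lambda$, it acts on $L_n=Dy^n$ ($n>0$) as multiplication by $\lambda^n$ and on $L_{-n}=Dx^n$ as multiplication by $\lambda^{-n}$. Every $L_n$ is nonzero, so each $\lambda^n$ ($n\in\Z$) occurs as an eigenvalue; and decomposing an arbitrary $\eta_t$-eigenvector into homogeneous components shows that every eigenvalue is of this form. Hence the eigenvalue set of $\eta_t$ is precisely $\{\lambda^n\mid n\in\Z\}=\langle\langle\lambda\rangle\rangle$, that of $\eta_v$ is $\langle\langle\mu\rangle\rangle$, and therefore $\langle\langle\lambda\rangle\rangle=\langle\langle\mu\rangle\rangle$.

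The step demanding the most care is the first one: passing rigorously from $xt=\lambda tx$ to $\sigma(t)=\lambda t$ and confirming that the resulting scaling map really is an automorphism of $L$ and coincides with the normalizing automorphism of $t$. Everything afterwards is formal bookkeeping with the grading, and --- it is worth noting --- the argument uses neither that $r$ is not a root of unity nor that $\K$ is algebraically closed, only the standing hypothesis $rs\neq 0$ which makes $L$ a domain carrying the grading of Proposition~\ref{P:grad:Li}.
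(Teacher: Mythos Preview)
Your proof is correct, but it takes a different route from the paper's. The paper argues directly: from $xv=\mu vx$ it derives $vy=\mu yv$ (by left-multiplying by $y$, using $yx\in D$, and cancelling), hence $vl=\mu^{i}lv$ for $l\in L_{i}$; then it applies $\phi$ to $xt=\lambda tx$, decomposes $\phi(x)=\sum_{j}x_{j}$ into homogeneous pieces, and reads off $\lambda\mu^{j}=1$ for each $j$ with $x_{j}\neq 0$, giving $\lambda\in\langle\langle\mu\rangle\rangle$ (and symmetrically the reverse inclusion). Your approach instead packages the skew-commutation as a normalizing automorphism $\eta_{t}$, shows $\phi$ conjugates $\eta_{t}$ to $\eta_{v}$, and identifies $\langle\langle\lambda\rangle\rangle$ and $\langle\langle\mu\rangle\rangle$ as the eigenvalue sets of these conjugate automorphisms. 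The underlying mechanism is the same graded eigenspace decomposition, but your version is more structural: it explains \emph{why} the subgroups must coincide (they are conjugacy invariants), whereas the paper's version is shorter and avoids setting up $\eta_{t}$ at all. Your closing remark that neither algebraic closure of $\K$ nor the non-root-of-unity hypothesis on $r$ is needed applies equally to the paper's argument. One minor point: you use $s$ as a dummy variable in $L$ while $s$ is already a fixed parameter of the algebra; consider renaming it to avoid confusion.
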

\begin{proof}
Replacing $\phi$ by $\phi^{-1}$, it is enough to show that $\langle\langle \lambda\rangle\rangle\subseteq\langle\langle \mu\rangle\rangle$.

If we multiply both sides of equation $xv=\mu vx$ on the left by $y$, observe that $yx\in D$ commutes with $v$ and use the fact that $L$ is a domain, we obtain $vy=\mu yv$. Therefore,
\begin{equation}\label{E:propautL:eigenv}
vl=\mu^{i}lv, \q\q \text{for all $l\in L_{i}$.}
\end{equation}
Let us write $\phi (x)=\sum_{j} x_{j}$, with $x_{j}\in L_{j}$. Applying $\phi$ to equation $xt=\lambda tx$ and using~(\ref{E:propautL:eigenv}), we deduce the following: 
$\sum_{j}x_{j}v=\sum_{j}\lambda \mu^{j}x_{j}v$. By the $\Z$-grading, $\lambda \mu^{j}=1$ for all $j\in\Z$ such that $x_{j}\neq 0$. Since $\phi (x)\neq 0$, there is $i\in\Z$ with $\lambda=\mu^{i}$. Thus $\lambda\in\langle\langle \mu\rangle\rangle$ and $\langle\langle \lambda\rangle\rangle\subseteq\langle\langle \mu\rangle\rangle$, as desired.

\end{proof}

\subsection{The  automorphisms of $L$}\label{SS:grautL}

Now we  describe the algebra automorphisms of $L$ in detail. The results of this section will be combined in the next section to determine  the group $\aut$.

Recall that $f(X)=sg(X)-g(rX)$. In case $f\neq 0$, we define a nonnegative integer $\rho$ by
\begin{equation*}
\rho=\gcd \{ \deg (f)-i\mid i\in\mathrm{supp}\, (f) \}
\end{equation*}
if $\{ \deg (f)-i\mid i\in\mathrm{supp}\, (f) \}\neq \{ 0\}$, and $\rho=0$ otherwise. We do not define $\rho$ if $f=0$.

\begin{lemma}\label{L:grautL:rho}
Let $\lambda, \mu\in\K$ with $\lambda\neq 0$. Then $f(\lambda X)=\mu f(X)$ $\iff$ either $f=0$, or $\lambda^{\rho}=1$ and $\lambda^{\deg (f)}=\mu$.
\end{lemma}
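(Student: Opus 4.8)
The statement to prove is the following equivalence, for $\lambda, \mu \in \K$ with $\lambda \neq 0$:
$$f(\lambda X) = \mu f(X) \iff \text{either } f = 0, \text{ or } \lambda^\rho = 1 \text{ and } \lambda^{\deg(f)} = \mu.$$

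\textbf{Proof plan.}

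The plan is to write $f$ out in coordinates and compare coefficients on both sides. Suppose $f \neq 0$ (the case $f = 0$ being trivial, since then both sides vanish for every $\lambda, \mu$, and the right-hand disjunct holds by its first clause). Write $f(X) = \sum_{i \in \mathrm{supp}(f)} a_i X^i$ with all $a_i \neq 0$ and put $N = \deg(f)$, so $N \in \mathrm{supp}(f)$ and $a_N \neq 0$. Then $f(\lambda X) = \sum_{i \in \mathrm{supp}(f)} a_i \lambda^i X^i$, so the identity $f(\lambda X) = \mu f(X)$ in $\K[X]$ is equivalent to the system of scalar equations $a_i \lambda^i = \mu a_i$ for all $i \in \mathrm{supp}(f)$, i.e.\ (dividing by $a_i \neq 0$) to $\lambda^i = \mu$ for every $i \in \mathrm{supp}(f)$.

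For the forward implication, assume $\lambda^i = \mu$ for all $i \in \mathrm{supp}(f)$. Taking $i = N$ gives $\lambda^N = \mu$, which is the second clause of the right-hand side. For the first clause, note that for any $i \in \mathrm{supp}(f)$ we have $\lambda^i = \mu = \lambda^N$, hence $\lambda^{N - i} = 1$ (using $\lambda \neq 0$). Thus $\lambda^{d} = 1$ for every $d$ in the set $S = \{N - i \mid i \in \mathrm{supp}(f)\}$. If $S = \{0\}$ then $\rho = 0$ and $\lambda^\rho = \lambda^0 = 1$ trivially; otherwise $\rho = \gcd(S) > 0$, and since the set of integers $n$ with $\lambda^n = 1$ is a subgroup of $\Z$ containing $S$, it contains $\gcd(S) = \rho$, so $\lambda^\rho = 1$. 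Conversely, assume $\lambda^\rho = 1$ and $\lambda^N = \mu$. For any $i \in \mathrm{supp}(f)$, the integer $N - i$ is a multiple of $\rho$ (by definition of $\rho$ as the gcd, including the degenerate case $S = \{0\}$ where $N - i = 0$ is a multiple of anything), so $\lambda^{N-i} = (\lambda^\rho)^{(N-i)/\rho} = 1$ when $\rho > 0$, and $\lambda^{N-i} = \lambda^0 = 1$ when $\rho = 0$; in either case $\lambda^i = \lambda^N = \mu$. Hence $\lambda^i = \mu$ holds for all $i \in \mathrm{supp}(f)$, which we saw is equivalent to $f(\lambda X) = \mu f(X)$.

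There is no real obstacle here: the only points requiring a modicum of care are the bookkeeping around the degenerate case $\rho = 0$ (where $S = \{0\}$, so every exponent $N - i$ equals $0$ and all conditions are vacuous) and the standard fact that $\{n \in \Z \mid \lambda^n = 1\}$ is a subgroup of $\Z$, so that it is closed under taking gcd's of its elements. Both are routine, which matches the paper's implicit framing of this as a preparatory computational lemma.
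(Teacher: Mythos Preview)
Your proof is correct and follows essentially the same approach as the paper's: both reduce the identity $f(\lambda X)=\mu f(X)$ to the system $\lambda^{i}=\mu$ for all $i\in\mathrm{supp}\,(f)$, and then use that $\rho$ is the $\gcd$ of the differences $\deg(f)-i$ to pass between this system and the pair of conditions $\lambda^{\rho}=1$, $\lambda^{\deg(f)}=\mu$. Your write-up is slightly more explicit about the subgroup $\{n\in\Z\mid\lambda^{n}=1\}$ and the degenerate case $\rho=0$, but the argument is the same.
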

\begin{proof}
Notice that $f(\lambda X)=\mu f(X)\iff \lambda^{i}=\mu$ for all $i\in \mathrm{supp}\, (f)$. If $f=0$ this condition is clearly satisfied, so assume $\lambda^{\rho}=1$ and $\lambda^{\deg (f)}=\mu$. Given 
$i\in \mathrm{supp}\, (f)$ we have, by the definition of $\rho$, $\lambda^{\deg (f)-i}=1$. Hence, 
$\mu=\lambda^{\deg (f)}=\lambda^{i}$.

Conversely, assume  that $f(\lambda X)=\mu f(X)$ and $f\neq 0$. Then, in particular, $\lambda^{\deg (f)}=\mu$. If $\rho=0$ there is nothing else to prove. Assume $\rho\neq 0$. By hypothesis, $\lambda^{i}=\mu=\lambda^{\deg{f}}$ for all $i\in \mathrm{supp}\, (f)$. As $\lambda\neq 0$ we have 
$\lambda^{\deg (f)-i}=1$ for all $i\in\mathrm{supp}\, (f)$. Hence $\lambda^{\rho}=1$.
\end{proof}

Consider the following subgroup of $\aut$:
\begin{equation*}
\h=\{ \phi\in\aut \mid \phi (h)=\lambda h, \  \text{for some $\lambda\in\K^{*}$} \}.
\end{equation*}

\begin{lemma}\label{L:grautL:someh}
The following define elements of $\h$:
\begin{enumerate}
\item If $f=0$ there is a unique 
$\phi_{(\alpha, \beta, \gamma)}\in\h$ defined on the generators  by 
$\phi_{(\alpha, \beta, \gamma)}(h)=\alpha h$, $\phi_{(\alpha, \beta, \gamma)}(x)=\beta x$, $\phi_{(\alpha, \beta, \gamma)}(y)=\gamma y$, for any $(\alpha, \beta, \gamma)\in\left(\K^{*}\right)^{3}$;
\item If $f\neq 0$ there is a unique 
$\phi_{(\alpha, \beta)}\in\h$ defined on the generators  by 
$\phi_{(\alpha, \beta)}(h)=\alpha h$, $\phi_{(\alpha, \beta)}(x)=\beta x$, 
$\phi_{(\alpha, \beta)}(y)=\beta^{-1}\alpha^{\deg (f)}y$, for any $(\alpha, \beta )\in\left(\K^{*}\right)^{2}$ such that $\alpha^{\rho}=1$.
\end{enumerate}
\end{lemma}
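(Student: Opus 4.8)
The strategy is to verify, in each of the two cases, that the prescribed assignment on the generators is compatible with the defining relations~\eqref{E:aut:gwa:gdua1}--\eqref{E:aut:gwa:gdua2} of the generalized Weyl algebra presentation $D(\sigma, k+g(h))$, hence extends to an algebra endomorphism of $L$, and that this endomorphism is bijective; uniqueness is automatic since $h$, $x$, $y$ generate $L$. Recall that in the present setting $\gamma=0$ and $f(X)=sg(X)-g(rX)$, with $D=\K[h,k]$, $\sigma(h)=rh$, $\sigma(k)=sk$, $yx=k+g(h)$, $xy=sk+g(rh)$, and the $\Z$-grading assigns degrees $0$, $-1$, $1$ to $h$, $x$, $y$.

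For case (a), where $f=0$: then $g=0$, so the relations simplify to $xh=rhx$, $hy=ryh$, $yx=k$, $xy=sk$ (equivalently $xy=syx$), together with the fact that $D=\K[h,a]=\K[h,yx]$. One sends $h\mapsto\alpha h$, $x\mapsto\beta x$, $y\mapsto\gamma y$, so that $yx\mapsto\gamma\beta\,yx$ and $k=yx\mapsto\gamma\beta k$. I check directly: $(\beta x)(\alpha h)=\alpha\beta\,xh=\alpha\beta r\,hx=r(\alpha h)(\beta x)$, and symmetrically for $hy=ryh$; and $(\beta x)(\gamma y)=\beta\gamma\,xy=s\beta\gamma\,yx=s(\gamma y)(\beta x)$. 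Thus all relations are preserved for \emph{any} $(\alpha,\beta,\gamma)\in(\K^*)^3$, giving an endomorphism $\phi_{(\alpha,\beta,\gamma)}$; its inverse is $\phi_{(\alpha^{-1},\beta^{-1},\gamma^{-1})}$, so it is an automorphism, and clearly $\phi_{(\alpha,\beta,\gamma)}\in\h$.

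For case (b), where $f\neq 0$: one sends $h\mapsto\alpha h$, $x\mapsto\beta x$, $y\mapsto\beta^{-1}\alpha^{\deg(f)}y$ with $\alpha^\rho=1$. The relations $xh=rhx$ and $hy=ryh$ are preserved exactly as in case (a), since the scalar factors commute past. The delicate relation is~\eqref{E:aut:gwa:gdua3}, i.e.\ $xy-syx+f(h)=0$ (equivalently $du-sud+f(h)=0$): applying the assignment gives $\beta\cdot\beta^{-1}\alpha^{\deg(f)}\,xy - s\,\beta^{-1}\alpha^{\deg(f)}\beta\,yx + f(\alpha h) = \alpha^{\deg(f)}(xy-syx) + f(\alpha h) = -\alpha^{\deg(f)}f(h)+f(\alpha h)$. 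This vanishes precisely when $f(\alpha X)=\alpha^{\deg(f)}f(X)$, which by Lemma~\ref{L:grautL:rho} (with $\mu=\alpha^{\deg(f)}$) is equivalent to $\alpha^\rho=1$ — exactly the hypothesis imposed. Hence the assignment respects all three defining relations and extends to an endomorphism $\phi_{(\alpha,\beta)}$ of $L$; one checks that the inverse assignment $h\mapsto\alpha^{-1}h$, $x\mapsto\beta^{-1}x$, $y\mapsto\beta\alpha^{-\deg(f)}y$ (note $(\alpha^{-1})^\rho=1$) defines its two-sided inverse, so $\phi_{(\alpha,\beta)}\in\h$, and uniqueness again follows from generation. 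The only real point requiring care — and the one place the hypothesis $\alpha^\rho=1$ enters — is the compatibility with the relation $du-sud+f(h)=0$, which is precisely handled by Lemma~\ref{L:grautL:rho}; everything else is a routine check that scalar factors on the generators are consistent with the homogeneous relations.
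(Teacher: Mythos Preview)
Your proof is correct and follows essentially the same approach as the paper: check that the defining relations~(\ref{E:defGdua1})--(\ref{E:defGdua3}) (with $\gamma=0$) are preserved, invoke Lemma~\ref{L:grautL:rho} for the only nontrivial relation $xy-syx+f(h)=0$ in case (b), and exhibit the explicit inverse. The paper is slightly terser in case (a), simply noting that all relations are homogeneous in the generators when $f=0$, but your direct verification amounts to the same thing.
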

\begin{proof}
The uniqueness is clear, as $L$ is generated by $h$, $x$ and $y$. To prove the existence, we need to check that relations~(\ref{E:defGdua1})--(\ref{E:defGdua3}), with $\gamma=0$ and $d$ (resp.\ $u$) replaced by $x$ (resp.\ $y$), are preserved when we define the homomorphism on the free algebra on generators $h$, $x$ and $y$, and to argue  the existence of an inverse. 

If $f=0$ then the relations are homogeneous in the generators and hence $\phi_{(\alpha, \beta, \gamma)}$ is indeed an automorphism, with inverse $\phi_{(\alpha^{-1}, \beta^{-1}, \gamma^{-1})}$.

Now consider the case $f\neq 0$. Relations $xh=rhx$ and $hy=ryh$ are trivial to check. When we apply 
$\phi_{(\alpha, \beta)}$ to  $xy-syx+f(h)$ we obtain $\alpha^{\deg (f)}(xy-syx)+f(\alpha h)$. Thus, we must have $f(\alpha h)=\alpha^{\deg (f)}f(h)$ for $\phi_{(\alpha, \beta)}$ to be a homomorphism of $L$. By Lemma~\ref{L:grautL:rho}, this is indeed the case, as we have the additional restriction that $\alpha^{\rho}=1$. Furthermore, $\phi^{-1}_{(\alpha, \beta)}=\phi_{(\alpha^{-1}, \beta^{-1})}$.
\end{proof}

Our next result describes the group $\h$.

\begin{proposition}\label{P:grautL:allh}
\begin{enumerate}
\item If $f=0$ then $\h=\{ \phi_{(\alpha, \beta, \gamma)}\mid (\alpha, \beta, \gamma)\in\left(\K^{*}\right)^{3} \}\simeq \left(\K^{*}\right)^{3}$, with $\phi_{(\alpha, \beta, \gamma)}$ as given in Lemma~\ref{L:grautL:someh}.
\item If $f\neq 0$ then $\h=\{ \phi_{(\alpha, \beta)}\mid (\alpha, \beta)\in\left(\K^{*}\right)^{2} \  
\text{and} \  \   \alpha^{\rho}=1 \}$, with $\phi_{(\alpha, \beta)}$ as given in Lemma~\ref{L:grautL:someh}. Consequently, $\h\simeq\left(\K^{*}\right)^{2}$ if $\rho=0$ and 
$\h\simeq \Z/\rho\Z \times\K^{*}$ if $\rho>0$.
\end{enumerate}
\end{proposition}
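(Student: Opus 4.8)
The plan is to show that any $\phi\in\h$ must coincide with one of the automorphisms $\phi_{(\alpha,\beta,\gamma)}$ (if $f=0$) or $\phi_{(\alpha,\beta)}$ (if $f\neq 0$) constructed in Lemma~\ref{L:grautL:someh}, and then to identify the resulting group structure. So suppose $\phi\in\h$, say $\phi(h)=\alpha h$ with $\alpha\in\K^{*}$. The first step is to locate $\phi(x)$ and $\phi(y)$ inside the $\Z$-graded algebra $L=\bigoplus_{i}L_{i}$. Applying $\phi$ to the relation $xh=rhx$ gives $\phi(x)(\alpha h)=r(\alpha h)\phi(x)$, i.e.\ $\phi(x)h=rh\phi(x)$; writing $\phi(x)=\sum_{j}v_{j}$ with $v_{j}\in L_{j}$ and using Corollary~\ref{C:grad:homrels}(a) (so that $h v_{j}=r^{j}v_{j}h$, whence $v_{j}h=r^{-j}hv_{j}$), comparing homogeneous components forces $r^{-j}=r$, i.e.\ $j=-1$, for every $j$ with $v_{j}\neq 0$. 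Since $r$ is not a root of unity this pins down $\phi(x)\in L_{-1}=Dx$, so $\phi(x)=p(h,k)x$ for some $p\in D$; symmetrically $\phi(y)\in L_{1}=Dy$, say $\phi(y)=q(h,k)y$.

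The second step is to use the remaining relations to force $p$ and $q$ to be scalars. Since $\phi$ is an automorphism, $\phi(x)$ must still generate a completely prime ideal (it does, because $x$ does when — wait, this is only true when $f=0$; in general one instead argues directly). More robustly: apply $\phi$ to $yx=k+g(h)\in D$, whose image must be central-looking enough to lie in $D$; compute $\phi(y)\phi(x)=q(h,k)y\,p(h,k)x=q(h,k)p(rh,sk)yx=q(h,k)p(rh,sk)(k+g(h))$, and this must equal $\phi(k+g(h))=\phi(k)+g(\alpha h)$, where $\phi(k)=\phi(ud-g(h))=\phi(y)\phi(x)-g(\alpha h)$ is consistent but not yet restrictive. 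The cleaner route is to note $\phi$ induces an automorphism of $D=L_{0}$? No — $\phi$ need not preserve $L_{0}$ setwise in general, but here it does: $\phi(D)=\phi(\K[h,a])$ and $\phi(h)=\alpha h\in D$, while $\phi(a)=\phi(yx)=\phi(y)\phi(x)\in L_{1}L_{-1}\subseteq L_{0}=D$. Hence $\phi$ restricts to a $\K$-algebra automorphism of $D=\K[h,k]$. Now $\phi(x)\phi(y)=s\phi(y)\phi(x)+\text{(stuff)}$ coming from the relation $xy=syx+\bigl(g(rh)-sg(h)\bigr)$ — more precisely from $xy-syx+f(h)=0$ — and applying $\phi$ and using $\phi(x)=p(h,k)x$, $\phi(y)=q(h,k)y$ together with \eqref{E:aut:gwa:gdua1} yields an identity in $D$ that, after cancelling $x$ on the right, reads $p(h,k)q(rh,sk)(sk+g(rh)) - s\,q(h,k)p(rh,sk)(k+g(h)) + f(\alpha h)=0$ in $D$. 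Comparing this with the analogous identity with $p=q=1$ (namely $sk+g(rh)-s(k+g(h))+f(h)=0$) and using that $L$ is a domain forces, by a degree/support comparison in the polynomial ring $D$, that $p$ and $q$ are nonzero scalars $\beta,\beta'\in\K^{*}$.

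The third step is to determine the constraints among $\alpha,\beta,\beta'$. With $\phi(h)=\alpha h$, $\phi(x)=\beta x$, $\phi(y)=\beta' y$, applying $\phi$ to the relation $xy-syx+f(h)=0$ gives $\beta\beta'(xy-syx)+f(\alpha h)=0$, hence $\beta\beta' f(h)=f(\alpha h)$ (using $xy-syx=-f(h)$), i.e.\ $f(\alpha h)=\beta\beta' f(h)$. If $f=0$ this is vacuous, so $(\alpha,\beta,\beta')$ ranges freely over $(\K^{*})^{3}$, giving part (a); the group isomorphism with $(\K^{*})^{3}$ is immediate from $\phi_{(\alpha,\beta,\gamma)}\circ\phi_{(\alpha',\beta',\gamma')}=\phi_{(\alpha\alpha',\beta\beta',\gamma\gamma')}$. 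If $f\neq 0$, Lemma~\ref{L:grautL:rho} applied to $f(\alpha X)=(\beta\beta')f(X)$ forces $\alpha^{\rho}=1$ and $\beta\beta'=\alpha^{\deg(f)}$, i.e.\ $\beta'=\beta^{-1}\alpha^{\deg(f)}$, so $\phi=\phi_{(\alpha,\beta)}$ with $\alpha^{\rho}=1$; conversely all such pairs arise by Lemma~\ref{L:grautL:someh}. The map $(\alpha,\beta)\mapsto\phi_{(\alpha,\beta)}$ is an injective group homomorphism from $\{(\alpha,\beta)\in(\K^{*})^{2}:\alpha^{\rho}=1\}$ onto $\h$ (composition rule $\phi_{(\alpha,\beta)}\circ\phi_{(\alpha',\beta')}=\phi_{(\alpha\alpha',\beta\beta')}$, which is consistent since $(\alpha\alpha')^{\deg f}=\alpha^{\deg f}\alpha'^{\deg f}$), and the domain is $\mu_{\rho}\times\K^{*}$, which is $(\K^{*})^{2}$ when $\rho=0$ (interpreting $\mu_{0}=\K^{*}$ — here one should instead say: when $\rho=0$ there is no constraint on $\alpha$) and $\Z/\rho\Z\times\K^{*}$ when $\rho>0$, since $\K$ is algebraically closed of characteristic $0$ so $\mu_{\rho}\cong\Z/\rho\Z$.

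The main obstacle is the second step: cleanly extracting from the image of relation~\eqref{E:defGdua3} that $p$ and $q$ are scalars rather than merely that they generate the same ideals as $1$. The efficient argument is to observe that $\phi(x)=p(h,k)x$ still generates $L$ together with $\phi(h),\phi(y)$, and that $\phi(x)$ is a normal element (being the image of the normal-or-not element $x$ — caution, $x$ is normal only when $f=0$); so the honest route is: $\phi$ restricts to an automorphism of $D$ fixing the height-one primes $\langle h\rangle$ and (when $f=0$) $\langle k\rangle$ appropriately, and comparing leading terms of the polynomial identity displayed above. Once $p,q\in\K^{*}$ is established, everything else is the routine bookkeeping sketched above.
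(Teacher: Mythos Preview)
Your first and third steps are fine and match the paper's argument. The genuine gap is exactly where you flag it: the second step, showing that $p(h,k)$ and $q(h,k)$ are scalars. None of the routes you sketch is actually carried out, and some of them (normality of $x$, completely prime ideals) do not work in general, as you yourself note. The polynomial identity you write down in $D$ is correct, but a ``degree/support comparison'' is not straightforward: the two leading terms $p(h,k)q(rh,sk)\cdot sk$ and $s\,q(h,k)p(rh,sk)\cdot k$ can and typically do cancel in top degree, so one does not immediately read off $\deg p=\deg q=0$.

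The paper's argument for this step is short and uses a fact you never invoke: \emph{surjectivity} of $\phi$. Once you know $\phi(L_{i})\subseteq L_{i}$ for all $i$ (which you have), surjectivity gives $\phi(L_{-1})=L_{-1}$. Hence there is some $v\in D$ with $\phi(vx)=x$. Then
\[
x=\phi(v)\phi(x)=\phi(v)\,p(h,k)\,x,
\]
and since $L$ is a domain this forces $\phi(v)\,p(h,k)=1$ in $D$. But $\phi(v)\in D$ (because $\phi(L_{0})\subseteq L_{0}$), so $p(h,k)$ is a unit of $D=\K[h,k]$, i.e.\ a nonzero scalar. The same argument handles $q$. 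With this in place, your third step goes through verbatim.
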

\begin{proof}
Let $\phi\in\aut$ with $\phi (h)=\alpha h$, for some $\alpha\in\K^{*}$. For $i\in\Z$ and $l\in L_{i}$, we have the relation $hl=r^{i}lh$. Upon applying $\phi$ to this relation and dividing by $\alpha$ we obtain the relation $h\phi (l)=r^{i}\phi (l)h$. Given the $\Z$-grading and since $L$ is a domain and $r$ is not a root of unity, it is routine to conclude that $\phi (L_{i})\subseteq L_{i}$. Moreover, as $\phi$ is onto it follows that $\phi (L_{i})=L_{i}$, for all $i\in\Z$.

Take $t$, $v\in D$ so that $\phi (x)=tx$ and $\phi (vx)=x$. Then, $x=\phi (vx)=\phi (v)\phi (x)=\phi (v)tx$ and thus $\phi (v)t=1$. Since both $\phi (v)$ and $t$ are elements of $D$, the latter implies that $t$ is a unit. So $\phi (x)=\beta x$, for some $\beta\in\K^{*}$; similarly, $\phi (y)=\gamma y$, for some $\gamma\in\K^{*}$.

If $f=0$ then $\phi=\phi_{(\alpha, \beta, \gamma)}$, as described in Lemma~\ref{L:grautL:someh}(a), and the map $\left(\K^{*}\right)^{3}\rightarrow\h$, $(\alpha, \beta, \gamma)\mapsto \phi_{(\alpha, \beta, \gamma)}$ is a group isomorphism.

Now suppose $f\neq 0$. 
Applying $\phi$ to both sides of the relation $xy-syx+f(h)=0$ yields 
$\beta\gamma(xy-syx)+f(\alpha h)=0$, which is equivalent in $L$ to  $\beta\gamma f(h)=f(\alpha h)$. Since the elements $\left\{ h^{j} \right\}_{j\geq 0}$ are linearly independent over $\K$, we have 
$\beta\gamma f(X)=f(\alpha X)$. Thus, by Lemma~\ref{L:grautL:rho}, $\alpha^{\rho}=1$ and 
$\beta\gamma=\alpha^{\deg (f)}$. So $\phi=\phi_{(\alpha, \beta)}$. 

If $\rho=0$ then  $\alpha$ and $\beta\in\K^{*}$ are arbitrary and $\left(\K^{*}\right)^{2}\rightarrow\h$, $(\alpha, \beta)\mapsto \phi_{(\alpha, \beta)}$ is a group isomorphism. Otherwise, if $\rho\geq 1$, let 
$\xi\in\K$ be a primitive $\rho$-th root of unity. Then  the multiplicative group $\{ \alpha\in\K^{*}\mid \alpha^{\rho}=1 \}=\{ \xi^{i}\mid 0\leq i\leq \rho-1 \}$ is isomorphic to the additive group $\Z/\rho\Z$ of integers modulo $\rho$ and $\Z/\rho\Z\times\K^{*}\rightarrow\h$, $(i+\rho\Z, \beta)\mapsto \phi_{(\xi^{i}, \beta)}$ is a group isomorphism.
\end{proof}

Now we turn our attention to automorphisms of $L$ not necessarily fixing the ideal $\langle h\rangle$.

\begin{lemma}\label{L:grautL:someother}
Assume $\tau>0$ and $f(X)=\alpha X+\beta$ for some $\alpha, \beta\in\K$. The following define automorphisms of $L$:
\begin{enumerate}
\item If $\epsilon=1$ and $\alpha=0$  then there is a unique $\psi^{+}_{(\mu, \mu', \nu, \eta)}\in\aut$ so that $\psi^{+}_{(\mu, \mu', \nu, \eta)}(x)=\mu x$, 
$\psi^{+}_{(\mu, \mu', \nu, \eta)}(y)=\mu' y$ and $\psi^{+}_{(\mu, \mu', \nu, \eta)}(h)=\nu h+\eta k^{\tau}$, for all $(\mu, \mu', \nu, \eta)\in\left(\K^{*} \right)^{3}\times\K$ with $\beta (\mu\mu'-1)=0$. 

\item If $\tau=1$, $\epsilon=-1$ and $\alpha\neq 0$ then there is a unique $\psi^{-}_{(\mu, \nu )}\in\aut$ so that $\psi^{-}_{(\mu, \nu )}(x)=\mu y$, $\psi^{-}_{(\mu, \nu )}(y)=\frac{r\alpha\nu}{\mu(s-r)} x$ and $\psi^{-}_{(\mu, \nu )}(h)=\nu k$, for all $(\mu, \nu )\in\left(\K^{*} \right)^{2}$ with $\beta (\frac{r\alpha\nu}{s-r}-1)=0$.
\end{enumerate}
\end{lemma}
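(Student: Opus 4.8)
The plan in both cases is to follow the pattern of Lemma~\ref{L:grautL:someh}: first verify that the prescribed images of the generators $x$, $y$, $h$ satisfy the defining relations~(\ref{E:defGdua1})--(\ref{E:defGdua3}) (taken with $\gamma=0$ and with $d$, $u$ replaced by $x$, $y$), so that they extend to an algebra endomorphism of $L$; and then exhibit a two-sided inverse of the same form, so that the endomorphism is an automorphism. All commutations involving $k$ will be handled with~(\ref{E:aut:gwa:gdua1}) and~(\ref{E:aut:gwa:gdua2}).

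For part~(a), I would first note that $\epsilon=1$ gives $s^{\tau}=r^{\epsilon}=r$, so that~(\ref{E:aut:gwa:gdua1}) yields $xk^{\tau}=rk^{\tau}x$ and $k^{\tau}y=ryk^{\tau}$; hence $\nu h+\eta k^{\tau}$ obeys exactly the same commutation rules with $x$ and with $y$ as $h$ does, which disposes of the first two defining relations. Since $\alpha=0$ we have $f(h)=\beta$, and applying $\psi^{+}_{(\mu,\mu',\nu,\eta)}$ to $xy-syx+\beta$ gives $\mu\mu'(xy-syx)+\beta=\beta(1-\mu\mu')$, which is zero precisely under the hypothesis $\beta(\mu\mu'-1)=0$. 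For the inverse: here $g$ is the constant $\beta/(s-1)$ (or $0$ when $\beta=0$; note $s\neq1$, as $r=s^{\tau}$ is not a root of unity), so from $k=yx-g(h)$ one finds $\psi^{+}_{(\mu,\mu',\nu,\eta)}(k)=k$ if $\beta\neq0$ and $=\mu\mu' k$ if $\beta=0$; thus $\psi^{+}_{(\mu,\mu',\nu,\eta)}(k^{\tau})$ is a scalar multiple of $k^{\tau}$, and a routine check shows that $\psi^{+}_{(\mu^{-1},\mu'^{-1},\nu^{-1},\,-\nu^{-1}\eta(\mu\mu')^{-\tau})}$ is a two-sided inverse (reading $(\mu\mu')^{-\tau}$ as $1$ when $\beta\neq0$, since then $\mu\mu'=1$).

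For part~(b), I would begin by observing that $\tau=1$ together with $r^{\epsilon}=s^{\tau}$ forces $s=r^{-1}$, i.e.\ $rs=1$; also $s\neq1$ and $s\neq r$ (otherwise $r$ would be a root of unity), so conformality gives $g(X)=\tfrac{\beta}{s-1}+\tfrac{\alpha}{s-r}X$. The first two defining relations then reduce, via~(\ref{E:aut:gwa:gdua1}), to the identities $yk=s^{-1}ky=rky$ and $kx=s^{-1}xk=rxk$. For the third relation I would compute in $D$, using~(\ref{E:aut:gwa:gdua2}), that $yx-sxy=(1-s^{2})k+\bigl(g(h)-sg(rh)\bigr)$; because $rs=1$ the linear term of $g(h)-sg(rh)$ cancels, leaving the constant $-\beta$, so that $\psi^{-}_{(\mu,\nu)}(xy-syx+\alpha h+\beta)$ collapses to a $\K$-linear combination of $k$ and $1$. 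Substituting the prescribed coefficient $\tfrac{r\alpha\nu}{\mu(s-r)}$ of $x$ in $\psi^{-}_{(\mu,\nu)}(y)$ and using $rs=1$ makes the $k$-coefficient vanish identically, while the $1$-coefficient is $\beta\bigl(1-\tfrac{r\alpha\nu}{s-r}\bigr)$, zero by hypothesis. For the inverse I would take $\psi^{-}_{(\mu_{1},\nu_{1})}$ with $\mu_{1}=\tfrac{\mu(s-r)}{r\alpha\nu}$ and $\nu_{1}=\tfrac{(s-r)^{2}}{r^{2}\alpha^{2}\nu}$, check that it satisfies the required constraint involving $\beta$, and verify that it composes with $\psi^{-}_{(\mu,\nu)}$ to the identity on the generators; the only non-immediate equality there is $\psi^{-}_{(\mu_{1},\nu_{1})}(k)=\nu^{-1}h$, which comes out of $k=yx-g(h)$ and~(\ref{E:aut:gwa:gdua2}), again using $rs=1$.

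The relation-checking is routine bookkeeping; the one place where I expect to need care is the computation of $yx-sxy$ (equivalently of $g(h)-sg(rh)$) in part~(b), where conformality together with $rs=1$ is exactly what makes the linear part disappear and leaves precisely the constant $-\beta$ --- and this in turn is what forces the particular shape of the coefficient $\tfrac{r\alpha\nu}{\mu(s-r)}$ in $\psi^{-}_{(\mu,\nu)}(y)$. As an alternative to writing down explicit inverses, one could instead observe that each of these maps is surjective --- the generators $x$, $y$, $h$ all lie in its image, which for $h$ in part~(b) uses that the $h$-coefficient of $\psi^{-}_{(\mu,\nu)}(k)$ is nonzero because $\alpha\neq0$ --- and then invoke that $L$ is Noetherian (Proposition~\ref{P:noeth:noethdomrs}), since a surjective algebra endomorphism of a Noetherian ring is automatically an automorphism.
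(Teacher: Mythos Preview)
Your proposal is correct and follows essentially the same approach as the paper: verify the three defining relations directly (using $s^{\tau}=r$ in~(a) and $rs=1$ in~(b) to handle the $k$-commutations and the collapse of $g(h)-sg(rh)$), compute $\psi(k)$, and exhibit the explicit inverse of the same form---your formulas for the inverses coincide with the paper's. The alternative surjectivity-plus-Noetherian argument you mention at the end is a valid shortcut the paper does not use, but the main line of your argument matches the paper's proof.
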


\begin{proof}
As in the proof of Lemma~\ref{L:grautL:someh}, the uniqueness is clear, and the existence follows from checking that relations~(\ref{E:defGdua1})--(\ref{E:defGdua3}) are preserved and from the construction of an inverse homomorphism.

Suppose first that $\epsilon=1$, $\alpha=0$  and the scalars $\mu$, $\mu'$, $\nu$, $\eta\in\K$ satisfy $\mu \mu'\nu\neq 0$ and 
$\beta (\mu\mu'-1)=0$. In this case, $f(X)=$ is a constant polynomial and 
\begin{align*}
\psi^{+}_{(\mu, \mu', \nu, \eta)}(xh-rhx) &= \mu \left(x (\nu h+\eta k^{\tau})-r(\nu h+\eta k^{\tau})x\right)\\
&=0,\q\q\q\text{as $r=s^{\tau}$;}\\[8pt]
\psi^{+}_{(\mu, \mu', \nu, \eta)}(hy-ryh) &= \mu' \left( (\nu h+\eta k^{\tau})y-ry(\nu h+\eta k^{\tau})\right)\\
&=0,\q\q\q\text{as $r=s^{\tau}$;}\\[8pt]
\psi^{+}_{(\mu, \mu', \nu, \eta)}(xy-syx+f(h)) &= \mu\mu' (xy-syx)+\beta \\
&=\mu\mu' \left(xy-syx+\beta \right) ,\q\q\q\text{as $\beta=\beta \mu\mu'$,} \\
&=0.
\end{align*}
Thus $\psi^{+}_{(\mu, \mu', \nu, \eta)}$ does indeed define an algebra endomorphism of $L$. Note also that $g(X)=\frac{\beta}{s-1}$ and thus $\psi^{+}_{(\mu, \mu', \nu, \eta)}(k)=\psi^{+}_{(\mu, \mu', \nu, \eta)}(yx-g(h))=\mu\mu' (yx-g(h))=\mu\mu' k$.
Finally, we can check that $\psi^{+}_{(\mu^{-1}, \mu'^{-1}, \nu^{-1}, \frac{-\eta}{(\mu\mu')^{\tau}\nu})}$ is the inverse of $\psi^{+}_{(\mu, \mu', \nu, \eta)}$.

Now assume $\tau=1$, $\epsilon=-1$, $\alpha\neq 0$, $(\mu, \nu )\in\left(\K^{*} \right)^{2}$ and $\beta (\frac{r\alpha\nu}{s-r}-1)=0$. Since $s=r^{-1}$ and $\beta=\frac{\alpha\beta\nu r}{s-r}$, we obtain 
\begin{align*}
\psi^{-}_{(\mu, \nu )}(xh-rhx) &= \mu\nu (yk-rky)=0;\\[8pt]
\psi^{-}_{(\mu, \nu )}(hy-ryh) &= \frac{r\alpha\nu^{2}}{\mu (s-r)} (kx-rxk)=0;\\[8pt]
\psi^{-}_{(\mu, \nu )}(xy-syx+f(h)) &= \frac{r\alpha\nu}{s-r} (yx-sxy)+f(\nu k)\\
&=\frac{r\alpha\nu}{s-r} ((1-r^{-2})k+g(h)-r^{-1}g(rh))+f(\nu k) \\
&=\frac{r\alpha\nu}{s-r} ((1-r^{-2})k+\frac{1-r^{-1}}{r^{-1}-1}\beta)+f(\nu k) \\
&=-(\alpha\nu k+\frac{\alpha\beta\nu r}{s-r})+f(\nu k)\\
&=0.
\end{align*}
So indeed $\psi^{-}_{(\mu, \nu )}$ defines an algebra endomorphism of $L$. It can be checked that $\psi^{-}_{(\mu, \nu )}(k)=\left(\frac{\alpha r}{s-r} \right)^{2}\nu h$ and that the  inverse of $\psi^{-}_{(\mu, \nu )}$ is $\psi^{-}_{(\frac{\mu(s-r)}{r\alpha\nu}, \left(\frac{s-r}{\alpha r}\right)^{2}\nu^{-1})}$.
\end{proof}

\begin{proposition}\label{P:grautL:another}
Suppose $\phi (h)=\lambda k$ for some $\phi\in\aut$  with $\lambda\in\K^{*}$. Then  
$s=r^{-1}$, there exists $(\alpha, \beta)\in\K^{*}\times\K$ so that $f(X)=\alpha X+\beta$, $\beta(\frac{r\alpha\lambda}{s-r}-1)=0$ and 
$\phi=\psi^{-}_{(\mu, \lambda )}$ for some $\mu\in\K^{*}$.
\end{proposition}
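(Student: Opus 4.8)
The plan is to exploit Proposition~\ref{P:grautL:allh} (applied to $\phi^{-1}$) together with Lemma~\ref{L:propautL:phih} and the structural description of normal elements from Proposition~\ref{P:aut:norm:allnorm}. Since $\phi\in\aut$, by Lemma~\ref{L:propautL:phih} the element $\phi(h)=\lambda k$ must be normal and must generate a completely prime ideal of $L$. The first step is therefore to record that $k$ is normal and that $\langle k\rangle$ is completely prime: normality of $k$ follows from $\sigma(k)=sk$ via~(\ref{E:aut:gwa:gdua1}), and $L/\langle k\rangle$ can be computed explicitly from~(\ref{E:aut:gwa:gdua1})--(\ref{E:aut:gwa:gdua2}) as an algebra on generators $\bar x$, $\bar y$, $\bar h$ subject to $\bar x\bar h=r\bar h\bar x$, $\bar h\bar y=r\bar y\bar h$, $\bar y\bar x=g(\bar h)$, $\bar x\bar y=g(r\bar h)$; this is a domain, so in fact the completely-prime condition gives us no new restriction but confirms consistency.

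Next I would apply the eigenvalue-matching Lemma~\ref{L:propautL:eigenv}. We have $xh=rhx$ and $xk=skx$ (from~(\ref{E:aut:gwa:gdua1})), so with $t=h$, $v=k$ and $\phi(t)=v$ (after rescaling, since $\phi(h)=\lambda k$ and eigenvalues are unaffected by nonzero scalars) the lemma forces $\langle\langle r\rangle\rangle=\langle\langle s\rangle\rangle$. Because $r$ is not a root of unity, this means $s=r^{m}$ for some nonzero integer $m$ \emph{and} $r=s^{n}$ for some nonzero integer $n$, whence $mn=1$ and $m=n=\pm1$; thus $s=r$ or $s=r^{-1}$. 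To eliminate $s=r$, observe that if $s=r$ then the map $h\leftrightarrow k$ is not an automorphism unless $g$ is suitably compatible — more directly, I would use the definition of $\tau$ and $\epsilon$: $s=r$ gives $\tau=1$, $\epsilon=1$, and then compare with what $\phi(h)=\lambda k$ imposes on the relation~(\ref{E:defGdua3}). Applying $\phi$ to $xy-syx+f(h)=0$ and using that $\phi$ sends $h$ to $\lambda k$ (so $\phi(k)=\phi(yx-g(h))$) leads to an identity in $D$ that, combined with $s=r$, is incompatible with $\phi$ being bijective; hence $s=r^{-1}$, i.e.\ $\tau=1$, $\epsilon=-1$.

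Once $s=r^{-1}$ is established, the third step is to pin down $f$ and the form of $\phi$. Since $\phi(h)=\lambda k$ generates the same ideal as $k$, and $\phi^{-1}(k)$ must be normal generating a completely prime ideal, Lemma~\ref{L:propautL:phih} applied to $\phi^{-1}$ and the normal-element classification (Proposition~\ref{P:aut:norm:allnorm} with Lemmas~\ref{L:norm:norminD} and~\ref{L:norm:normxnyn}) force $\phi^{-1}(k)$ to be a scalar multiple of $h$; since $k=yx-g(h)$, applying $\phi$ gives $\lambda^{-1}\phi(h)\cdot(\text{scalar})$... more usefully, $\phi(k)$ is a scalar multiple of $h$. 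Then, just as in the proof of Proposition~\ref{P:grautL:allh}, the relation $hl=r^{i}lh$ for $l\in L_i$ transforms under $\phi$ (using $\phi(h)=\lambda k$ and $kl'=s^{i}l'k$ for $l'\in L_i$, cf.~(\ref{E:propautL:eigenv})) into a grading condition showing $\phi(L_i)=L_{-i}$; hence $\phi(x)=\mu y$ and $\phi(y)=\mu' x$ for some $\mu,\mu'\in\K^{*}$. Substituting into~(\ref{E:defGdua3}) and comparing coefficients in $D$ then forces $g(X)$ to have degree at most $1$ (so $f(X)=\alpha X+\beta$ with $\alpha=\deg$-condition), determines $\mu'=\frac{r\alpha\lambda}{\mu(s-r)}$, and yields the constraint $\beta(\frac{r\alpha\lambda}{s-r}-1)=0$. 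Matching with Lemma~\ref{L:grautL:someother}(b) identifies $\phi=\psi^{-}_{(\mu,\lambda)}$.

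The main obstacle I anticipate is the elimination of the case $s=r$ and, relatedly, making the coefficient comparison in~(\ref{E:defGdua3}) rigorous: one must carefully track how $\phi(k)=\phi(yx)-\phi(g(h))=\mu'\mu\,(xy\text{ or }yx\text{ term})-g(\lambda k)$ expands in $D=\K[h,k]$ using~(\ref{E:aut:gwa:gdua2}), and argue that the only way the result lands in $\K h$ (as it must, being $\phi(k)$ with $k$'s image known to be a scalar times $h$) is for $g$ to be affine and for $s=r^{-1}$ rather than $s=r$. Everything else is bookkeeping with the $\Z$-grading and the already-established normal-element structure.
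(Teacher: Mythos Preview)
Your overall strategy matches the paper's: apply Lemma~\ref{L:propautL:eigenv} to get $s=r^{\pm1}$, use the grading argument from Proposition~\ref{P:grautL:allh} to locate $\phi(x)$ and $\phi(y)$, then substitute into relation~(\ref{E:defGdua3}) and compare coefficients in $D$. Two points deserve correction.

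First, your elimination of $s=r$ is vague, and the paper's argument is more concrete than you anticipate. If $s=r$ then the grading argument gives $\phi(L_i)=L_i$, hence $\phi(x)=\mu x$, $\phi(y)=\mu' y$ for scalars. Applying $\phi$ to $xy-syx+f(h)=0$ yields $\mu\mu' f(h)=f(\lambda k)$ in $D=\K[h,k]$; since one side lies in $\K[h]$ and the other in $\K[k]$, $f$ must be a constant $\beta$, so $g=\beta/(s-1)$ is also constant. Then $\phi(k)=\phi(yx)-g=\mu\mu' yx-g=\mu\mu' k$ (using $\beta\mu\mu'=\beta$). But now $\phi(\mu\mu' h)=\mu\mu'\lambda k=\phi(\lambda k)$ with $\mu\mu'\lambda\neq0$, contradicting injectivity. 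This is the whole argument; no appeal to ``incompatibility with bijectivity'' in the abstract is needed.

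Second, your claim that $\phi(k)$ must be a scalar multiple of $h$ is unjustified and unnecessary. You seem to want to invoke Lemma~\ref{L:propautL:phih} for $\phi^{-1}$, but that lemma concerns $\phi^{-1}(h)$, not $\phi^{-1}(k)$ or $\phi(k)$; there is no a priori reason $\phi(k)$ should be a scalar times $h$. The paper never makes this claim. Instead, once $s=r^{-1}$ and $\phi(L_i)=L_{-i}$ are established (exactly as you correctly outline via $k\phi(l)=r^i\phi(l)k$ forcing $\phi(l)\in L_{-i}$), one has $\phi(x)=\mu y$, $\phi(y)=\mu' x$, and applying $\phi$ to $xy-r^{-1}yx+f(h)=0$ and expanding via~(\ref{E:aut:gwa:gdua2}) gives
\[
\mu\mu'(1-r^{-2})k+f(\lambda k)=\mu\mu'\bigl(r^{-1}g(rh)-g(h)\bigr),
\]
an identity in $D$ with one side in $\K[k]$ and the other in $\K[h]$. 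Both sides are therefore scalars, which forces $\deg(f)=1$ (the coefficient of $k$ on the left must vanish, and $\mu\mu'(1-r^{-2})\neq0$), and then the scalar and linear constraints give exactly $\mu'=\frac{r\alpha\lambda}{\mu(s-r)}$ and $\beta(\frac{r\alpha\lambda}{s-r}-1)=0$.

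A minor side remark: your assertion that $L/\langle k\rangle$ is always a domain is false when $g=0$, since then $\bar y\bar x=0=\bar x\bar y$; but you do not actually use this, so it does no harm.
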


\begin{proof}
By Lemma~\ref{L:propautL:eigenv}, $r$ and $s$ generate the same multiplicative subgroup of $\K^{*}$ and so $s=r^{\pm 1}$, as $\langle\langle r\rangle\rangle$ is the infinite cyclic group. If we argue as in the proof of Proposition~\ref{P:grautL:allh}, we can deduce that, for any $i\in\Z$, $\phi (L_{i})=L_{i}$ if $s=r$ or $\phi (L_{i})=L_{-i}$ if $s=r^{-1}$. In either case, $\phi (D)=D$.

Suppose, by way of contradiction, that $s=r$. Then, again following the proof of 
Proposition~\ref{P:grautL:allh}, there exist $\mu, \mu'\in\K^{*}$ so that $\phi (x)=\mu x$ and 
$\phi (y)=\mu' y$. Upon applying $\phi$ to the relation $xy-syx+f(h)=0$ we obtain the equality 
$\mu\mu'(xy-syx)+f(\lambda k)=0$, which is equivalent to 
\begin{equation}\label{E:grautL:another:sr}
\mu\mu' f(h)=f(\lambda k).
\end{equation}
The left-hand-side of~(\ref{E:grautL:another:sr}) being a polynomial in $h$  whereas the right-hand-side is one in $k$ implies that $f$ is a constant polynomial, say $f=\beta\in\K$. Thus $g=\frac{\beta}{s-1}$  and equation~(\ref{E:grautL:another:sr}) yields $\beta=\beta\mu\mu'$. Hence, $\phi (k)=\phi (yx-\frac{\beta}{s-1})=\mu\mu' yx-\frac{\beta}{s-1}=\mu\mu' k$. This contradicts the injectivity of $\phi$, as we would have 
$\phi (\lambda k)=\mu\mu'\lambda k=\phi (\mu\mu' h)$, with $\lambda\mu\mu'\neq 0$. So indeed 
$s=r^{-1}$.

As before, given that $\phi (L_{\pm 1})=L_{\mp 1}$, there exist $\mu, \mu'\in\K^{*}$ so that $\phi (x)=\mu y$ and  $\phi (y)=\mu' x$. This time, if we apply $\phi$ to relation $xy-r^{-1}yx+f(h)=0$ and work it out in 
$L$ using~(\ref{E:aut:gwa:gdua2}), we arrive at the equivalent equation
\begin{equation}\label{E:grautL:another:srinv}
\mu\mu'(1-r^{-2})k+f(\lambda k)=\mu\mu'(r^{-1}g(rh)-g(h)).
\end{equation}
So each one of the two sides of~(\ref{E:grautL:another:srinv}) must be a scalar. In particular, the condition $\mu\mu'(1-r^{-2})k+f(\lambda k)\in\K$ implies $\deg (f)=\deg (g)=1$, as $r$ is not a root of $1$. 
Thus $f(X)=\alpha X+\beta$, for $\alpha, \beta\in\K$ with $\alpha\neq 0$ and 
$g(X)=\frac{\alpha}{r^{-1}-r}X+\frac{\beta}{r^{-1}-1}$. Equation~(\ref{E:grautL:another:srinv}) becomes 
$\left( \mu\mu' (1-r^{-2})+\lambda\alpha\right) k+\beta=\mu\mu'\beta$, which is equivalent to $\mu'=\frac{r\alpha\lambda}{\mu (s-r)}$ and $\beta (\frac{r\alpha\lambda}{s-r} -1)=0$. So 
$\phi=\psi^{-}_{(\mu, \lambda )}$, as we wished to conclude.
\end{proof}

We will continue our study of those automorphisms of $L$ which are not in the subgroup $\h$. The following proposition will be useful in the proof of 
Proposition~\ref{P:grautL:htoxn}. Lacking a precise reference for part (a), we provide a simple sketch of the proof. Note, however, that parts (b) and (c) can be deduced from~\cite[Prop.\ 1.4.4]{AC92}.

In what follows, given $q\in\K\setminus\{ 0, 1\}$, $\K_{q}[z, w]$ denotes the quantum plane, generated over $\K$ by indeterminates $z$, $w$ satisfying the $q$-commutation relation $zw=qwz$.

\begin{proposition}\label{P:grautL:qplane}
Let $q, q'\in\K\setminus\{ 0, 1\}$ and assume $\phi : \K_{q}[z, w]\rightarrow\K_{q'}[z', w']$ is an algebra isomorphism. Then:
\begin{enumerate}
\item $q'=q^{\pm 1}$;
\item if $q\neq -1$ then either $q'=q$, $\phi (z)=\lambda_{1}z'$, $\phi (w)=\lambda_{2}w'$ or $q'=q^{-1}$, $\phi (z)=\lambda_{1}w'$, $\phi (w)=\lambda_{2}z'$, for $\lambda_{1}, \lambda_{2}\in\K^{*}$;
\item if $q=-1$ then either  $\phi (z)=\lambda_{1}z'$, $\phi (w)=\lambda_{2}w'$ or  
$\phi (z)=\lambda_{1}w'$, $\phi (w)=\lambda_{2}z'$, for $\lambda_{1}, \lambda_{2}\in\K^{*}$.
\end{enumerate}
\end{proposition}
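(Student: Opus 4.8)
The plan is to analyze the isomorphism $\phi$ by studying how it interacts with the commutators and with the group-like structure of the quantum plane. First I would observe that $\K_q[z,w]$ has a canonical $\N^2$-grading with $(z,w)$ in degrees $(1,0)$ and $(0,1)$; the key structural fact I want is that the units of $\K_q[z,w]$ are exactly $\K^*$, and more importantly that the \emph{normal} monomials are precisely the scalar multiples of $z^aw^b$. Indeed, if $p\in\K_q[z,w]$ is normal then $zp=\lambda pz$ and $wp=\mu pw$ for scalars $\lambda,\mu\in\K^*$ (since $zp,pz$ lie in the same graded component up to the $q$-twist, and the algebra is a domain), and writing $p=\sum a_{ij}z^iw^j$ one finds $q^{-j}=\lambda$ and $q^{i}=\mu$ whenever $a_{ij}\ne 0$; as $q$ is not a root of unity in case (b) this forces $p$ to be a single monomial $a z^aw^b$. (In case (c), $q=-1$ is a root of unity, so this argument needs the separate treatment indicated below.)

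Next, I would exploit the fact that $z$ and $w$ each generate a normal (in fact completely prime, with polynomial-ring quotient) ideal, and these properties are preserved by $\phi$. So $\phi(z)$ and $\phi(w)$ are normal elements of $\K_{q'}[z',w']$ generating completely prime principal ideals. By the normal-monomial analysis of the previous paragraph (applied in $\K_{q'}[z',w']$, valid when $q'$ is not a root of unity), $\phi(z)=\lambda_1 (z')^a(w')^b$ and $\phi(w)=\lambda_2(z')^c(w')^e$ with $\lambda_1,\lambda_2\in\K^*$; the completely-prime condition on $\langle\phi(z)\rangle$ forces the exponent pair $(a,b)$ to be $(1,0)$ or $(0,1)$ — any other monomial factors as a product of two non-units, so the ideal is not prime (cf. Lemma~\ref{L:aut:norm:prodnormpri}, or directly Lemma~\ref{L:propautL:phihnotxn}-style reasoning). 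Likewise $(c,e)\in\{(1,0),(0,1)\}$. Since $\phi$ is an isomorphism, $\phi(z)$ and $\phi(w)$ cannot be proportional to the same generator, so either $\bigl(\phi(z),\phi(w)\bigr)=(\lambda_1 z',\lambda_2 w')$ or $\bigl(\phi(z),\phi(w)\bigr)=(\lambda_1 w',\lambda_2 z')$.

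Finally I would pin down $q'$ by applying $\phi$ to the defining relation $zw=qwz$. In the first case one gets $\lambda_1\lambda_2 z'w'=q\lambda_1\lambda_2 w'z'$, i.e. $q'=q$; in the second case $\lambda_1\lambda_2 w'z'=q\lambda_1\lambda_2 z'w'$, i.e. $q'^{-1}=q$, that is $q'=q^{-1}$. This simultaneously proves (a) and (b). For part (c), with $q=q'=-1$ (so $q'=q^{\pm1}$ holds trivially), the only gap is that the normal-monomial argument used a root-of-unity hypothesis; here I would instead note that $\K_{-1}[z,w]$ is a free module of rank $4$ over its center $\K[z^2,w^2]$, that $z^2$ and $w^2$ span the degree-$2$ part of the center, and that $\phi$ restricts to an isomorphism of centers, hence permutes the height-one primes $\langle z^2\rangle,\langle w^2\rangle$ of the center; pulling this back shows $\phi(z)$ is a normal element whose square is a scalar times $z'^2$ or $w'^2$, and a short direct computation in $\K_{-1}[z',w']$ (comparing graded components, using that $\phi$ preserves the $\N^2$-grading up to swapping the two factors, which follows because $\phi$ sends the one-dimensional degree-$1$-and-normal spaces to themselves appropriately) gives $\phi(z)=\lambda_1 z'$ or $\lambda_1 w'$, and similarly for $w$. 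The relation-checking at the end is automatic since $q=q'=-1$.

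The main obstacle, and the only real subtlety, is case (c): the clean normality-forces-monomial argument relies on $q$ not being a root of unity, so $q=-1$ genuinely needs the separate center-based / grading argument sketched above, and one must be careful that at $q=-1$ there are \emph{more} normal elements (e.g. $z^2,w^2,zw$ are all normal) but still only $z$ and $w$ (up to scalars) are normal \emph{and} generate a completely prime ideal whose quotient is a commutative polynomial ring — that last property is what rules out the off-diagonal monomials. Everything else is routine bookkeeping with the grading and the domain property, for which I would simply cite the analogous arguments already carried out in Section~\ref{SS:grautL} and in~\cite[Prop.\ 1.4.4]{AC92}.
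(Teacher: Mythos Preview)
Your overall strategy---show that $\phi(z)$ and $\phi(w)$ are normal and generate completely prime ideals, classify such elements, then read off $q'=q^{\pm 1}$ from the relation $zw=qwz$---is exactly the paper's. But the execution has a gap.

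You assert that ``$q$ is not a root of unity in case (b)'' and use this to force every normal element to be a single monomial. The hypothesis in (b) is only $q\neq -1$; nothing prevents $q$ from being, say, a primitive cube root of unity. Worse, you then apply this monomial conclusion inside $\K_{q'}[z',w']$, and at that point you know nothing about $q'$ at all---it could even equal $-1$ while $q\neq -1$. So the key structural step rests on an unjustified premise, and the argument for (a) and (b) is circular as written.

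The paper sidesteps this by stating the normal-element structure in the form valid for \emph{every} $q'\in\K\setminus\{0,1\}$: each nonzero normal element of $\K_{q'}[z',w']$ is $z'^{a}w'^{b}v$ with $v$ \emph{central}. (When $q'$ is not a root of unity the center is $\K$ and one recovers your monomial claim; when $q'$ is a primitive $n$-th root of unity the center is $\K[z'^{n},w'^{n}]$.) Then $a=b=0$ is excluded since $z$ is not central, and the completely-prime condition forces $v\in\K^{*}$ and $(a,b)\in\{(1,0),(0,1)\}$ exactly as in your Lemma~\ref{L:aut:norm:prodnormpri}-style reasoning, uniformly in $q'$. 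Surjectivity rules out $\phi(z)$ and $\phi(w)$ landing on the same generator, and the relation gives $q'=q$ or $q'=q^{-1}$. No case split between (b) and (c) is needed.

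Your separate treatment of (c) also has a flaw: the center $\K[z'^{2},w'^{2}]$ is a polynomial ring in two variables, so its height-one primes are all $\langle p\rangle$ for $p$ irreducible, not just $\langle z'^{2}\rangle$ and $\langle w'^{2}\rangle$; an isomorphism of centers does not automatically permute those two. The cleanest repair is simply to adopt the general normal-element description above, which handles $q=-1$ along with every other value.
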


\begin{proof}
It can easily be shown that the set of normal elements of $\K_{q'}[z', w']$ is 
$\{ z'^{a}w'^{b}v\mid a, b\in\N, v\in\Z\left( \K_{q'}[z', w']\right) \}$. Therefore, 
$\phi (z)=z'^{a}w'^{b}v$ for some $a, b\in\N$ and $v\in\Z\left( \K_{q'}[z', w']\right)$. If $a=0=b$ then $\phi (z)$ would be central, implying that also $z$ is central and $q=1$, which is a contradiction. Since $z$ generates a completely prime ideal of $\K_{q}[z, w]$, it must be that $v=\lambda_{1}\in\K^{*}$ and either $a=1$, $b=0$ or $a=0$, $b=1$. A similar statement holds for $\phi(w)$. If $\phi (z)=\lambda_{1}z'$ then, by the surjectivity of $\phi$, $\phi (w)=\lambda_{2}w'$ for some $\lambda_{2}\in\K^{*}$. By relation $zw=qwz$, it follows that $q'=q$. Similarly, if $\phi (z)=\lambda_{1}w'$ then $q'=q^{-1}$.
\end{proof}

The proof of the next lemma is omitted, as it is obvious.

\begin{lemma}\label{L:grautL:some4f0}
Assume $f=0$ and $s=r^{-1}$. There are  $\phi, \psi\in\aut$, uniquely determined by the rules:
\begin{align*}
\phi (x)=\lambda_{1}y, \q\q \phi (y)=\lambda_{2}h \q \text{and} \q \phi (h)=\lambda_{3}x,\\
\psi (x)=\lambda_{1}h, \q\q \psi (y)=\lambda_{2}x \q \text{and} \q \psi (h)=\lambda_{3}y,
\end{align*}
for any $(\lambda_{1}, \lambda_{2}, \lambda_{3})\in\left(\K^{*}\right )^{3}$.
\end{lemma}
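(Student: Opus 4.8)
The plan is to exhibit a cyclic $\Z/3$ symmetry of the defining relations of $L$ that is present exactly when $f=0$ and $s=r^{-1}$, and to recognize $\phi$ and $\psi$ as two incarnations of it. Under the standing hypothesis $\gamma=0$ and with $f=0$ we may take $g=0$, so by~(\ref{E:aut:gwa:gdua1})--(\ref{E:aut:gwa:gdua2}) the algebra $L$ is generated by $x$, $y$, $h$ subject only to the relations $xh=rhx$, $hy=ryh$ and $xy=syx$. Since $s=r^{-1}$, the last relation reads $yx=rxy$, and putting $(z_1,z_2,z_3)=(x,h,y)$ all three relations take the uniform shape $z_iz_{i+1}=rz_{i+1}z_i$ with indices read cyclically modulo $3$. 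In particular this system is carried to itself, up to an independent rescaling of each generator, by either of the two cyclic permutations of $(z_1,z_2,z_3)$.

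First I would verify that $\psi$ and $\phi$ are well-defined algebra endomorphisms of $L$, proceeding exactly as in the proof of Lemma~\ref{L:grautL:someh}: applying the assignment defining $\psi$ to each relator produces a nonzero scalar multiple of another relator, e.g.\ $\psi(z_1z_2-rz_2z_1)=\lambda_1\lambda_3(z_2z_3-rz_3z_2)=0$, and similarly for the other two; the same computation applies to $\phi$, which realizes the opposite cyclic permutation. Then I would observe that $\phi$ with parameters $(\lambda_2^{-1},\lambda_3^{-1},\lambda_1^{-1})$ is a two-sided inverse of $\psi$ with parameters $(\lambda_1,\lambda_2,\lambda_3)$ — one checks this on the generators $h$, $x$, $y$ — and symmetrically for the reverse composite; hence both $\phi$ and $\psi$ are automorphisms. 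Uniqueness is immediate since $h$, $x$, $y$ generate $L$.

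There is essentially no obstacle, which is why the authors call the statement obvious; the only points deserving a line of care are to record the inverse in the form stated (so it again lies among the listed maps) and to note that the two hypotheses are precisely what produces the cyclic symmetry: if $f\neq 0$ the relation $xy-syx+f(h)=0$ destroys the symmetry between $h$ and the pair $x,y$, and if $s\neq r^{-1}$ the exponents occurring in the three relations no longer match up after a cyclic relabeling.
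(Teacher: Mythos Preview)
Your proposal is correct. The paper omits the proof entirely as ``obvious,'' and your verification---checking that the defining relations are preserved and exhibiting an explicit inverse among the listed maps---is exactly the routine computation the authors are suppressing; your observation that the three relations acquire the uniform cyclic form $z_{i}z_{i+1}=rz_{i+1}z_{i}$ when $f=0$ and $s=r^{-1}$ is a clean way to organize it.
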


\begin{proposition}\label{P:grautL:htoxn}
Suppose there exists $\phi\in\aut$ satisfying $\phi (h)=\mu x^{n}$ 
(resp.\ $\phi (h)=\mu y^{n}$), for some $n\geq 1$ and $\mu\in\K^{*}$. Then:
\begin{enumerate}
\item $n=1$, $f=0$, $s=r^{-1}$;
\item $\phi (x)=\lambda_{1}y$ and $\phi (y)=\lambda_{2}h$ (resp.\ $\phi (x)=\lambda_{1}h$ and $\phi (y)=\lambda_{2}x$), for some $\lambda_{1}, \lambda_{2}\in\K^{*}$.
\end{enumerate}
\end{proposition}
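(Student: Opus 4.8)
The plan is to analyze the hypothesis $\phi(h)=\mu x^{n}$ in two stages: first pin down that $n=1$ and $f=0$ using the results already established about normal elements and completely prime ideals, then leverage the grading and Proposition~\ref{P:grautL:qplane} to read off the precise form of $\phi$ on the remaining generators. By the antiautomorphism interchanging $x$ and $y$ (end of Section~\ref{SS:prel}), it suffices to treat the case $\phi(h)=\mu x^{n}$; the case $\phi(h)=\mu y^{n}$ follows by composing with that antiautomorphism.

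First I would apply Lemma~\ref{L:propautL:phih}: since $\phi$ is an automorphism, $\phi(h)=\mu x^{n}$ must be a normal element generating a completely prime ideal of $L$. As $\mu\in\K^{*}$, the ideal $\langle \mu x^{n}\rangle=\langle x^{n}\rangle$, so $x^{n}$ is normal and generates a completely prime ideal. Lemma~\ref{L:propautL:phihnotxn} then forces $n=1$ and $f=0$ immediately, giving the first half of (a). It remains to extract $s=r^{-1}$ and the action on $x$ and $y$. With $f=0$, Lemma~\ref{L:propautL:phihnotxn} (or Lemma~\ref{L:norm:normxnyn}) tells us $x$ is normal, and in fact $L/\langle x\rangle\cong \K_{r}[\bar h,\bar y]$, the quantum plane with $\bar h\bar y=r\bar y\bar h$ (this factor computation appears in the proof of Lemma~\ref{L:propautL:phihnotxn}).

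Next I would exploit that $\phi$ descends to an isomorphism of factor algebras. Since $\phi(h)=\mu x$, we have $\phi(\langle h\rangle)=\langle x\rangle$, hence $\phi$ induces an isomorphism $\bar\phi: L/\langle h\rangle \xrightarrow{\sim} L/\langle x\rangle$. Now $L/\langle h\rangle$ is the algebra on $\bar x,\bar y$ with $\bar x\bar y=s\bar y\bar x$ (from~(\ref{E:defGdua3}) with $f=0$, $\gamma=0$), i.e.\ the quantum plane $\K_{s}[\bar x,\bar y]$ provided $s\neq 1$; and since we are assuming $L$ is conformal with $r$ not a root of unity and $f=0$, one checks $s\neq 1$ is automatic here (if $s=1$ the factor $\K[\bar x,\bar y]$ is not a quantum plane, but $\K_{r}[\bar h,\bar y]$ is one with $r\neq 1$, and a commutative polynomial ring is never isomorphic to a genuine quantum plane — so $s\neq 1$). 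Apply Proposition~\ref{P:grautL:qplane} to $\bar\phi:\K_{s}[\bar x,\bar y]\to\K_{r}[\bar h,\bar y]$: the quantum parameters must satisfy $r=s^{\pm 1}$. If $r=s$ then $\bar\phi(\bar x)=\lambda_1\bar h$, $\bar\phi(\bar y)=\lambda_2\bar y$ (or the swapped version), which would give $\phi(x)\in \lambda_1 h + \langle x\rangle$; tracking this back through the $\Z$-grading — using that $\phi$ permutes the homogeneous components $L_i$ compatibly with the eigenvalue argument of Lemma~\ref{L:grautL:someother}'s style and Lemma~\ref{L:propautL:eigenv} applied to $h$ and $x$ — leads to a contradiction (the subgroups $\langle\langle r\rangle\rangle$ and $\langle\langle r\rangle\rangle$ would be fine, but the homogeneous degrees cannot match up; more directly, $\phi(h)=\mu x$ has $\Z$-degree $-1$ whereas $h$ has degree $0$, so $\phi$ cannot preserve the grading, forcing $s=r^{-1}$). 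Hence $s=r^{-1}$, completing (a), and from the $r=s^{-1}$ case of Proposition~\ref{P:grautL:qplane} we get $\bar\phi(\bar x)=\lambda_1\bar y$, $\bar\phi(\bar y)=\lambda_2\bar h$ (the other option $\bar\phi(\bar x)=\lambda_1 \bar h$ is excluded since $\phi(x)$ must lie in a grading component sent appropriately). Lifting to $L$ via Lemma~\ref{L:grautL:some4f0}, which says exactly such automorphisms exist and are determined by their action on generators, yields $\phi(x)=\lambda_1 y$, $\phi(y)=\lambda_2 h$ for some $\lambda_1,\lambda_2\in\K^{*}$, establishing (b).

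\textbf{Main obstacle.} The delicate point is the bookkeeping in ruling out $s=r$ and in matching the quantum-plane isomorphism data of $\bar\phi$ with the actual action of $\phi$ on $L$. One must be careful that an isomorphism of the factor quantum planes is induced by $\phi$ in a way that respects how $\phi$ moves the $\Z$-homogeneous pieces $L_i$ (cf.\ the argument in Proposition~\ref{P:grautL:allh} that $\phi(L_i)=L_i$ or, when $s=r^{-1}$, $\phi(L_i)=L_{-i}$); the cleanest route is probably to first establish directly from $\phi(h)=\mu x$ and the relation $h\ell=r^{i}\ell h$ for $\ell\in L_i$ that $\phi$ reverses the grading, which simultaneously forces $s=r^{-1}$ (via Lemma~\ref{L:propautL:eigenv} or a direct computation with $xy-syx=0$) and tells us $\phi(x)\in L_1$, $\phi(y)\in L_{-1}$, after which the factor-algebra computation is only a consistency check rather than the main engine. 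Everything else is routine given Lemmas~\ref{L:propautL:phih}, \ref{L:propautL:phihnotxn}, \ref{L:grautL:some4f0} and Proposition~\ref{P:grautL:qplane}.
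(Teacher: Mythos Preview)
Your overall architecture matches the paper's: first apply Lemmas~\ref{L:propautL:phih} and~\ref{L:propautL:phihnotxn} to get $n=1$ and $f=0$, then pass to the induced isomorphism $\bar\phi:\K_{s}[\bar x,\bar y]\to\K_{r}[\bar h,\bar y]$ and invoke Proposition~\ref{P:grautL:qplane}. However, two steps are genuinely incomplete.

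First, your exclusion of $s=r$ is not an argument. Saying ``$\phi(h)=\mu x$ has $\Z$-degree $-1$ whereas $h$ has degree $0$, so $\phi$ cannot preserve the grading, forcing $s=r^{-1}$'' does not work: nothing a priori says $\phi$ must either preserve or reverse the grading. The paper handles this concretely: when $s=r$, Proposition~\ref{P:grautL:qplane} gives $\bar\phi(\bar x)=\lambda_{1}\bar h$, so $\phi(x)=\lambda_{1}h+v_{1}x$ for some $v_{1}\in L$; applying $\phi$ to $xh=rhx$ and computing yields $\lambda_{1}(1-r^{2})h=rxv_{1}-v_{1}x\in\langle x\rangle$, a contradiction since $r^{2}\neq 1$.

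Second, and more seriously, your ``lifting via Lemma~\ref{L:grautL:some4f0}'' is a gap. Knowing $\bar\phi(\bar x)=\lambda_{1}\bar y$ only gives $\phi(x)=\lambda_{1}y+v_{1}x$ for some $v_{1}\in L$; Lemma~\ref{L:grautL:some4f0} merely asserts existence of certain automorphisms, not that every $\phi$ inducing this $\bar\phi$ is one of them. The paper closes this as follows: applying $\phi$ to $xh=rhx$ gives $v_{1}x=rxv_{1}$, which (by expanding in the basis $\{x^{\alpha}y^{\beta}h^{\gamma}\}$) forces $v_{1}=y\xi$ for some $\xi\in L$, so $\phi(x)=y(\lambda_{1}+\xi x)$. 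Since $\phi(x)$ is normal generating a completely prime ideal and $y$ is normal, $\lambda_{1}+\xi x$ is normal; then Lemma~\ref{L:aut:norm:prodnormpri} forces $\lambda_{1}+\xi x\in\K^{*}$, hence $\xi=0$. The same reasoning applies to $\phi(y)$. Your alternative suggestion of first showing $\phi(L_{i})=L_{-i}$ would also work, but you would still need an argument (as in Proposition~\ref{P:grautL:allh}) that $\phi(x)\in L_{1}=Dy$ implies $\phi(x)\in\K^{*}y$; this is not automatic.
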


\begin{proof}
Let $\phi\in\aut$ with $\phi (h)=\mu x^{n}$, $n\geq 1$ and $\mu\in\K^{*}$. By 
Lemmas~\ref{L:propautL:phih} and~\ref{L:propautL:phihnotxn}, $n=1$ and $f=0$. Then $\phi$ induces an isomorphism $\bar{\phi}: L/\langle h\rangle \rightarrow L/\langle x\rangle$. Clearly, $L/\langle h\rangle\simeq\K_{s}[\bar{x}, \bar{y}]$ and 
$L/\langle x\rangle\simeq\K_{r}[\bar{h}, \bar{y}]$, so Proposition~\ref{P:grautL:qplane} implies that $s=r^{\pm 1}$.

Suppose first that $s=r$. Then, again by Proposition~\ref{P:grautL:qplane} ($r\neq -1$), there are $\lambda_{1}, \lambda_{2}\in\K^{*}$ so that 
$\bar{\phi}(\bar{x})=\lambda_{1}\bar{h}$ and $\bar{\phi}(\bar{y})=\lambda_{2}\bar{y}$. Hence there exist also $v_{1}, v_{2}\in L$ such that $\phi (x)=\lambda_{1}h+v_{1}x$ and $\phi (y)=\lambda_{2}y+v_{2}x$. If we apply $\phi$ to the relation $xh=rhx$ and compute, we arrive at the relation $\lambda_{1}(1-r^{2})h=rxv_{1}-v_{1}x$. This implies the contradiction $h\in\langle x\rangle$. Therefore it must be that $s=r^{-1}$.

Proceeding in the same manner as in the last paragraph, we conclude that $\phi (x)=\lambda_{1}y+v_{1}x$ and $\phi (y)=\lambda_{2}h+v_{2}x$. Furthermore, relation $xh=rhx$ implies 
$v_{1}x=rxv_{1}$. Writing $v_{1}$ in the basis $\left\{ x^{\alpha}y^{\beta}h^{\gamma} \right\}_{\alpha, \beta, \gamma\geq 0}$, we easily deduce that $v_{1}$ can be written as 
$v_{1}=y\xi$, for some $\xi\in L$. Thus $\phi (x)=y(\lambda_{1}+\xi x)$. Since $\phi (x)$ must be normal and generate a completely prime ideal of $L$, as $x$ does, it follows that 
$\lambda_{1}+\xi x$ is normal (because both $y(\lambda_{1}+\xi x)$ and $y$ are normal, and $L$ is a domain). We can then invoke Lemma~\ref{L:aut:norm:prodnormpri} and infer that $\lambda_{1}+\xi x\in\K^{*}$. In such a case, necessarily $\xi=0$ and 
$\phi (x)=\lambda_{1}y$. Similarly, $\phi (y)=\lambda_{2}h$.

The case $\phi (h)=\mu y^{n}$ is symmetric.
\end{proof}

\begin{proposition}\label{P:grautL:htoNd0dl}
Suppose $\tau, \epsilon>0$ and $\phi (h)=\sum_{i=0}^{l}d_{i}\left( h^{\epsilon} \right)^{l-i}\left( k^{\tau} \right)^{i}$, for some $\phi\in\aut$ with $l>0$, $d_{i}\in\K$  and $d_{0}, d_{l}\neq 0$. Then $\epsilon=l=1$, $f(X)=\beta\in\K$ and $\phi=\psi^{+}_{(\mu, \mu', d_{0}, d_{1})}$ for some $\mu, \mu'\in\K^{*}$ so that  $\beta(\mu \mu'-1)=0$.
\end{proposition}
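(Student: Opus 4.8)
The plan is to proceed exactly as in the preceding propositions dealing with automorphisms that do not preserve $\langle h\rangle$, exploiting the completely-prime-ideal criterion and the $\Z$-grading. Write $p=p(h,k)=\sum_{i=0}^{l}d_{i}(h^{\epsilon})^{l-i}(k^{\tau})^{i}$, so $\phi(h)=p$. First I would observe that, by Lemma~\ref{L:norm:norminD}(c), $p$ is precisely a normal element of $D$ of the type allowed in the case $\tau,\epsilon>0$, and by Lemma~\ref{L:propautL:phih} the ideal $\langle p\rangle$ is completely prime. I want to use this to force $l=1$. The factor $L/\langle p\rangle$ must be a domain; but in $D=\K[h,k]$ the element $p$ factors (over the algebraically closed field $\K$) into homogeneous pieces with respect to the grading $\deg h^{\epsilon}=\deg k^{\tau}$, namely $p=d_{l}\prod_{j}\big(h^{\epsilon}-c_{j}k^{\tau}\big)$ with $c_{j}\in\K^{*}$ (using $d_{0}\neq 0$). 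Since $h^{\epsilon}-c_{j}k^{\tau}$ is central (it lies in $\Z(L)=\K[h^{-\epsilon}k^{\tau}]$ — here one has to be careful: $\epsilon>0$, so $h^{-\epsilon}k^{\tau}$ is not central; instead $h^{\epsilon}-c_jk^{\tau}$ need not be central, so I must instead argue that each factor is normal, using Lemma~\ref{L:norm:norminD}(c) applied to the factor itself) and each factor is a non-unit normal element, Lemma~\ref{L:aut:norm:prodnormpri} (or rather iterated use of it, together with the fact that $\langle p\rangle$ being prime forces $p$ to have a single irreducible normal factor up to scalars) gives $l=1$. Thus $\phi(h)=d_{0}h^{\epsilon}+d_{1}k^{\tau}$.

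Next I would pin down $\epsilon$. Since $\phi$ is an automorphism and $\phi^{-1}(h)$ must again be a normal element generating a completely prime ideal, and $\phi(L_{i})$ is governed by the eigenvalue relation $h\ell=r^{i}\ell h$ as in Propositions~\ref{P:grautL:allh} and~\ref{P:grautL:another}: applying $\phi$ to $h\ell=r^{i}\ell h$ for $\ell\in L_{i}$ yields $\phi(h)\phi(\ell)=r^{i}\phi(\ell)\phi(h)$, and since $\phi(h)=d_{0}h^{\epsilon}+d_{1}k^{\tau}$ is central in $D$ and satisfies $x\,\phi(h)=(d_0 r^{\epsilon}h^{\epsilon}+d_1 s^{\tau}k^{\tau})x = r^{\epsilon}(d_0 h^{\epsilon}+d_1 k^{\tau})x=r^{\epsilon}\phi(h)x$ (using $s^{\tau}=r^{\epsilon}$), one gets that $\phi$ rescales the grading by the factor $\epsilon$, i.e. $\phi(L_{i})\subseteq L_{\epsilon i}$-type behaviour forces, by surjectivity and the fact that $L_{\pm1}\neq 0$ must be hit, $\epsilon=1$. (This is the analogue of the dichotomy $s=r^{\pm1}$ in Proposition~\ref{P:grautL:another}.) With $\epsilon=1$ and $l=1$ we have $\phi(h)=d_{0}h+d_{1}k^{\tau}$, and $\phi(D)=D$, $\phi(L_{i})=L_{i}$ for all $i$.

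Then the argument is routine and parallels Lemma~\ref{L:grautL:someother}(a): since $\phi(L_{\pm1})=L_{\pm1}$ and the units of $D$ are $\K^{*}$, there are $\mu,\mu'\in\K^{*}$ with $\phi(x)=\mu x$, $\phi(y)=\mu' y$. Applying $\phi$ to $xy-syx+f(h)=0$ gives $\mu\mu'(xy-syx)+f(d_{0}h+d_{1}k^{\tau})=0$, equivalently $\mu\mu' f(h)=f(d_{0}h+d_{1}k^{\tau})$. The left side is a polynomial in $h$ alone while the right side involves $k^{\tau}$ nontrivially unless $f$ is constant (here $\tau>0$ is used), so $f(X)=\beta\in\K$; comparing then gives $\beta=\mu\mu'\beta$, i.e.\ $\beta(\mu\mu'-1)=0$. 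By Lemma~\ref{L:grautL:someother}(a) the map $\psi^{+}_{(\mu,\mu',d_{0},d_{1})}$ is a well-defined automorphism and it agrees with $\phi$ on the generators $x,y,h$, hence $\phi=\psi^{+}_{(\mu,\mu',d_{0},d_{1})}$.

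I expect the main obstacle to be the first step: cleanly extracting $l=1$ from the hypothesis that $\langle p\rangle$ is completely prime. The factor $p$ is $d_l$ times a product of $l$ linear forms in $h^{\epsilon},k^{\tau}$ over $\K$, and one needs that each such linear form $h^{\epsilon}-c k^{\tau}$ (with $c\neq0$, guaranteed by $d_0\neq0$) is a normal non-unit of $L$ — which follows from Lemma~\ref{L:norm:norminD}(c) read in reverse, i.e.\ verifying directly via Corollary~\ref{C:grad:homrels} that $h^{\epsilon}-ck^{\tau}$ is normal since $x(h^{\epsilon}-ck^{\tau})=r^{\epsilon}(h^{\epsilon}-ck^{\tau})x$. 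Then Lemma~\ref{L:aut:norm:prodnormpri}, applied with $t=h^{\epsilon}-c_{1}k^{\tau}$ and $v$ the product of the remaining factors (all normal), forces one of them to be a scalar; since none of the linear forms is a scalar, we must have had only one factor, i.e.\ $l=1$. Everything after the reduction to $\phi(h)=d_{0}h+d_{1}k^{\tau}$ is a direct transcription of the computations already carried out in Lemma~\ref{L:grautL:someother}(a) and Proposition~\ref{P:grautL:allh}.
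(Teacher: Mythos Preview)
Your argument is essentially correct, but it takes a noticeably longer route than the paper for the key step of establishing $\epsilon=l=1$. You split this into two stages: first you force $l=1$ by factoring $p$ as a product of linear forms $h^{\epsilon}-c_{j}k^{\tau}$ in $D$ (each of which is indeed normal since $s^{\tau}=r^{\epsilon}$) and invoking Lemma~\ref{L:aut:norm:prodnormpri} against the complete primality of $\langle\phi(h)\rangle$; then you force $\epsilon=1$ by a grading argument. The paper instead dispatches both at once via Lemma~\ref{L:propautL:eigenv}: since $xh=rhx$ and $xN=r^{\epsilon l}Nx$ with $N=\phi(h)$, that lemma gives $\langle\langle r\rangle\rangle=\langle\langle r^{\epsilon l}\rangle\rangle$, whence $\epsilon l=\pm 1$ and so $\epsilon=l=1$. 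This is considerably shorter and avoids any factorization or primality considerations. After this point your argument and the paper's coincide verbatim.

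Two small corrections to your write-up. In the factorization you should have $p=d_{0}\prod_{j}(h^{\epsilon}-c_{j}k^{\tau})$, not $d_{l}$, since $d_{0}$ is the coefficient of $(h^{\epsilon})^{l}$; the nonvanishing of each $c_{j}$ then comes from $d_{l}\neq 0$. More importantly, your grading claim is in the wrong direction: from $\phi(h)\,m=r^{\epsilon j}\,m\,\phi(h)$ for $m\in L_{j}$ together with $\phi(h)\phi(\ell)=r^{i}\phi(\ell)\phi(h)$ for $\ell\in L_{i}$, one gets $\phi(L_{i})\subseteq L_{i/\epsilon}$ (when $\epsilon\mid i$), not $L_{\epsilon i}$. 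The conclusion $\epsilon=1$ then follows from \emph{injectivity} (since $0\neq\phi(x)$ with $x\in L_{1}$ forces $\epsilon\mid 1$), rather than from surjectivity as you wrote.
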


\begin{proof}
Let $\phi\in\aut$ and assume the hypotheses of the proposition are satisfied. For simplicity of notation write $N:=\sum_{i=0}^{l}d_{i}\left( h^{\epsilon} \right)^{l-i}\left( k^{\tau} \right)^{i}$. Then $xN=r^{\epsilon l}Nx$ and $yN=r^{-\epsilon l}Ny$. By 
Lemma~\ref{L:propautL:eigenv}, $r$ and $r^{\epsilon l}$ generate the same multiplicative subgroup of $\K^{*}$ and hence, $r$ not being a root of unity, $\epsilon l=\pm 1$. As both integers $\epsilon$ and $l$ are positive, it must be that $\epsilon=l=1$. In particular, $\phi (h)=N=d_{0}h+d_{1}k^{\tau}$, $xN=rNx$ and $yN=r^{-1}Ny$. As we have argued  in the proof of Proposition~\ref{P:grautL:allh}, this implies that $\phi (L_{i})=L_{i}$ for all $i\in\Z$, and that $\phi (x)=\mu x$, 
$\phi (y)=\mu' y$ for some $\mu, \mu'\in\K^{*}$.

If we apply $\phi$ to the relation $xy-syx+f(h)=0$ and simplify, we obtain $\mu\mu'f(h)=f(N)$. Since the left-hand side of the latter equation is a polynomial in $h$ and $N=d_{0}h+d_{1}k^{\tau}$ with $\tau>0$ and $d_{1}\neq 0$, by hypothesis, the given relation forces $f$ to be a constant polynomial, say $f(X)=\beta\in\K$. In that case, equation $\mu\mu'f(h)=f(N)$ reduces to $\beta(\mu\mu'-1)=0$ and $\phi$ must be the automorphism $\phi=\psi^{+}_{(\mu, \mu', d_{0}, d_{1})}$ of 
Lemma~\ref{L:grautL:someother}(a).
\end{proof}

\subsection{The group  $\aut$}\label{SS:thegp}

Using the information obtained this far, especially in Section~\ref{SS:grautL}, the  group 
$\aut$ is   explicitly determined in the following theorem. Recall the definition of $\tau$ and $\epsilon$ given in Section~\ref{SS:cent}.

\begin{theorem}\label{T:thegp:main} 
Let $L=L(f, r, s, \gamma)$ be a generalized down-up algebra. 
Assume $r$, $s\in\K^{*}$, $r$ is not a root of unity and $f(X)=sg(X)-g(rX-\gamma)$ for some $g\in\K[X]$. Then the group $\aut$ of algebra automorphisms of $L$ is isomorphic to:
\begin{enumerate}
\item $\left(\K^{*} \right)^{3}\rtimes \Z/3\Z$ if $f=0$ and $s=r^{-1}$, where the generator 
$1+3\Z$ of $\Z/3\Z$ acts on the torus $\left(\K^{*} \right)^{3}$ via the automorphism $(\lambda_{1}, \lambda_{2}, \lambda_{3})\mapsto (\lambda_{3}, \lambda_{1}, \lambda_{2})$;

\item $\K\rtimes\left(\K^{*} \right)^{3}$ if $f=0$ and $s^{\tau}=r$, where $(\lambda_{1}, \lambda_{2}, \lambda_{3})\in\left(\K^{*} \right)^{3}$ acts on the additive group $\K$ via the automorphism $t\mapsto \lambda_{1}^{-1} \left(\lambda_{2} \lambda_{3}\right)^{\tau}t$;

\item $\K\rtimes\left(\K^{*} \right)^{2}$ if $\deg (f)=0$ and $s^{\tau}=r$, where $(\lambda_{1}, \lambda_{2})\in\left(\K^{*} \right)^{2}$ acts on the additive group $\K$ via the automorphism $t\mapsto \lambda_{1}^{-1} t$;

\item $\left(\K^{*} \right)^{2}\rtimes \Z/2\Z$ if $\deg (f)=1$, $s=r^{-1}$ and $f(\frac{\gamma}{r-1})=0$, where the generator 
$1+2\Z$ of $\Z/2\Z$ acts on the torus $\left(\K^{*} \right)^{2}$ via the automorphism $(\lambda_{1}, \lambda_{2})\mapsto (\lambda_{1}, \lambda_{2}^{-1} \lambda_{1})$;

\item $\K^{*}\rtimes \Z/2\Z$ if $\deg (f)=1$, $s=r^{-1}$ and $f(\frac{\gamma}{r-1})\neq 0$, where the generator 
$1+2\Z$ of $\Z/2\Z$ acts on the torus $\K^{*}$ via the automorphism $\lambda \mapsto \lambda^{-1}$;

\item $\h$ otherwise, where $\h$ should be taken as  described in Proposition~\ref{P:grautL:allh}, with $\rho$ determined with respect to the polynomial $\tilde{f}(X)=f\left( \frac{1}{r-1}(X+\gamma) \right)$.
\end{enumerate}
\end{theorem}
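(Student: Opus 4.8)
The plan is to reduce to the case $\gamma=0$ via Proposition~\ref{P:conf:gamma0rnot1}, then use the structural results of Section~\ref{SS:grautL} to determine which automorphisms can occur, and finally assemble the group from the description of $\h$ and the ``exotic'' automorphisms. Since $r$ is not a root of unity we have $r\neq 1$, so by Proposition~\ref{P:conf:gamma0rnot1} there is an isomorphism $L(f,r,s,\gamma)\simeq L(\tilde f,r,s,0)$ with $\tilde f(X)=f\!\left(\frac{1}{r-1}(X+\gamma)\right)$ of the same degree as $f$, and by Lemma~\ref{L:conf:gamma0} we may further adjust $\tilde f$ so that $\mathrm{supp}\,(\tilde f)=\mathrm{supp}\,(g)$, making $g$ unique. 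So it suffices to compute $\aut$ for $L=L(\tilde f,r,s,0)$ presented as the generalized Weyl algebra $D(\sigma,k+g(h))$ as in Section~\ref{S:aut}. Note that $\deg(\tilde f)=\deg(f)$ and that the various conditions in the statement ($f=0$, $\deg(f)=0$, $\deg(f)=1$, $f(\tfrac{\gamma}{r-1})=0$) translate respectively to $\tilde f=0$, $\deg(\tilde f)=0$, $\deg(\tilde f)=1$, and $\tilde f(0)=0$; keeping careful track of this dictionary is a bookkeeping point to handle cleanly.

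The core of the argument is a case analysis on $\phi(h)$ for an arbitrary $\phi\in\aut$. By Lemma~\ref{L:propautL:phih}, $\phi(h)$ is a normal element generating a completely prime ideal; by Proposition~\ref{P:aut:norm:allnorm} combined with Lemma~\ref{L:norm:norminD} and Lemma~\ref{L:norm:normxnyn}, every normal element is of the form $p(h,k)x^n$ or $p(h,k)y^n$ with $p$ of the restricted shapes listed in Lemma~\ref{L:norm:norminD}, and Lemma~\ref{L:propautL:phihnotxn} rules out $x^n,y^n$ ($n\geq1$) as generators of a completely prime ideal unless $n=1$ and $\tilde f=0$. Working through the possibilities for the normal element $\phi(h)$ and applying Lemma~\ref{L:aut:norm:prodnormpri} to the factorization $p(h,k)\cdot x^n$ forces $\phi(h)$ into one of only three types: (i) $\phi(h)=\lambda h$ with $\lambda\in\K^*$, i.e. $\phi\in\h$; (ii) $\phi(h)=\lambda k$ (handled by Proposition~\ref{P:grautL:another}, which forces $s=r^{-1}$, $\deg(\tilde f)=1$ with $\tilde f(0)$ subject to the stated scalar condition, and $\phi=\psi^-_{(\mu,\lambda)}$); (iii) $\phi(h)=\mu x^n$ or $\mu y^n$ (handled by Proposition~\ref{P:grautL:htoxn}, which forces $n=1$, $\tilde f=0$, $s=r^{-1}$, and gives $\phi(x),\phi(y)$ explicitly); together with the case $\tau,\epsilon>0$ where $\phi(h)=\sum d_i(h^\epsilon)^{l-i}(k^\tau)^i$ (handled by Proposition~\ref{P:grautL:htoNd0dl}, forcing $\epsilon=l=1$, $\tilde f$ constant, and $\phi=\psi^+_{(\mu,\mu',d_0,d_1)}$). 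One must check that when $\tau=0$ the only surviving case is (i), when $\tau>0,\epsilon\leq0$ the element $k^\tau$ is central so again only (i) and possibly (iii) survive, etc.; the dichotomies in the definition of $\tau,\epsilon$ must be matched against the hypotheses $s=r^{-1}$ (giving $\tau=1,\epsilon=-1$) and $s^\tau=r$ (giving $\epsilon=1$) appearing in cases (a)--(e) of the theorem.

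With the list of all automorphisms in hand, the final step is to identify the group structure in each case. In the generic situation (f), every $\phi$ satisfies $\phi(h)=\lambda h$, so $\aut=\h$, described by Proposition~\ref{P:grautL:allh} with $\rho$ computed from $\tilde f$. In cases (a)--(e) one has $\aut=\h\cup(\text{exotic coset(s)})$; here $\h$ is the normal subgroup (it is precisely the stabilizer of the ideal $\langle h\rangle$, hence visibly normal, being the kernel of the action of $\aut$ on the finite or one/two-element set of ``$h$-type'' normal generators), and the exotic automorphisms ($\psi^\pm$, or the $\phi,\psi$ of Lemma~\ref{L:grautL:some4f0}) supply a complementary cyclic factor $\Z/2\Z$ or $\Z/3\Z$, or in cases (b),(c) the extra normal factor $\K$ coming from $\psi^+_{(1,1,1,\eta)}$. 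The conjugation action of the complement on $\h$ — computed by composing the explicit formulas from Lemmas~\ref{L:grautL:someh}, \ref{L:grautL:someother} and~\ref{L:grautL:some4f0} — yields the semidirect-product actions displayed in the statement; e.g. in (a) the order-$3$ automorphism cyclically permutes the roles of $x,y,h$, inducing the cyclic shift on $(\K^*)^3$, while in (d),(e) conjugation by $\psi^-$ sends $\phi_{(\alpha,\beta)}$ to $\phi_{(\alpha,\beta^{-1}\alpha)}$ after translating through $\phi(h)=\lambda k$ and $\psi^-(k)=(\tfrac{\alpha r}{s-r})^2\nu h$. I expect the main obstacle to be precisely this last bookkeeping: correctly composing the explicit generator formulas to get the conjugation actions right (signs and inverses in the exponents), and making sure the constraints $\beta(\mu\mu'-1)=0$ resp. $\beta(\tfrac{r\alpha\lambda}{s-r}-1)=0$ from Lemma~\ref{L:grautL:someother} split cleanly into the sub-cases ``$\tilde f(0)=0$'' (free parameter, giving the larger torus in (b),(d)) versus ``$\tilde f(0)\neq0$'' (forcing $\mu\mu'=1$ resp. fixing $\mu$, giving the smaller group in (c),(e)).
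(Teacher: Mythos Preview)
Your proposal is correct and follows essentially the same route as the paper, which proves the theorem by splitting into three propositions (according to $f=0$; $f\neq0$, $\epsilon>0$; $f\neq0$, $\epsilon\leq0$) and in each performs exactly the case analysis on $\phi(h)$ that you outline, citing the same lemmas. One small slip in your exposition: $\h$ is \emph{not} the normal factor in cases (b) and (c) --- there the paper writes $\aut=\{\psi^{+}_{(1,1,1,t)}\mid t\in\K\}\rtimes\h$ with the additive $\K$ normal and $\h$ as complement --- though you in fact already acknowledge this when you call $\K$ ``the extra normal factor'' at the end.
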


In view of Proposition~\ref{P:conf:gamma0rnot1} and the hypothesis that $r$ is not a root of unity, we can assume that $\gamma=0$, by replacing $f\in\K[X]$ with $\tilde{f}(X)=f\left( \frac{1}{r-1}(X+\gamma) \right)$. Notice that $f=0$ \emph{iff} $\tilde{f}=0$, $\deg (f)=\deg (\tilde{f})$ and $f(\frac{\gamma}{r-1})=0$ \emph{iff} $\tilde{f}(0)=0$. Hence, for the proof of Theorem~\ref{T:thegp:main}, we assume that 
$\gamma=0$ and  $\h$ is the subgroup of $\aut$ defined in Section~\ref{SS:grautL} and computed in  Proposition~\ref{P:grautL:allh}, relative to $\tilde{f}$.  For a group $G$, $\langle\langle a\rangle\rangle$ denotes the cyclic subgroup of $G$ generated by $a\in G$.

For the sake of clarity, we split the proof of this theorem into three propositions, dealing separately  with the cases $f=0$; $f\neq 0$, $\epsilon>0$; and $f\neq 0$, $\epsilon\leq 0$. Each of these propositions gives additional insight into the group $\aut$, as it explicitly lists the elements of this group, rather than just describing the group up to isomorphism.

\begin{proposition}\label{P:thegp:f0} 
Let $L=L(f, r, s, 0)$ be as before and suppose $f=0$. Then:
\begin{enumerate}
\item $\aut=\h\rtimes \langle\langle \phi\rangle\rangle$ if $s=r^{-1}$, where $\phi$ is  defined by  $\phi (x)=y$, $\phi(y)=h$, $\phi(h)=x$, as given in Lemma~\ref{L:grautL:some4f0}; for all $\phi_{(\lambda_{1}, \lambda_{2}, \lambda_{3})}\in\h$, $\phi\circ\phi_{(\lambda_{1}, \lambda_{2}, \lambda_{3})}\circ\phi^{-1}=\phi_{(\lambda_{3}, \lambda_{1}, \lambda_{2})}$.

\item $\aut=\{ \psi^{+}_{(1, 1, 1, t)}\mid t\in\K \}\rtimes \h$ if $s^{\tau}=r$, where $\psi^{+}_{(1, 1, 1, t)}$ is  given in Lemma~\ref{L:grautL:someother}(a); for all $\phi_{(\lambda_{1}, \lambda_{2}, \lambda_{3})}\in\h$, $\phi_{(\lambda_{1}, \lambda_{2}, \lambda_{3})}\circ\psi^{+}_{(1, 1, 1, t)}\circ\phi_{(\lambda_{1}, \lambda_{2}, \lambda_{3})}^{-1}=\psi^{+}_{(1, 1, 1, \lambda_{1}^{-1} \left(\lambda_{2} \lambda_{3}\right)^{\tau}t)}$.

\item $\aut=\h$ otherwise.
\end{enumerate}
\end{proposition}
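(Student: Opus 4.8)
The plan is to use the structural results of Section~\ref{SS:grautL} to classify an arbitrary $\phi\in\aut$ according to the value of $\phi(h)$. Since $\phi(h)$ is always a normal element generating a completely prime ideal (Lemma~\ref{L:propautL:phih}), and since all normal elements of $L$ are described by Proposition~\ref{P:aut:norm:allnorm} together with Lemmas~\ref{L:norm:norminD} and~\ref{L:norm:normxnyn}, the candidates for $\phi(h)$ are very restricted in the case $f=0$. First I would observe that $f=0$ forces $\tau=0$ unless $s$ is a power of $r$; more precisely, $\tau=0$ exactly when no positive power of $s$ equals a power of $r$, and in that case Lemma~\ref{L:norm:norminD}(a) shows that any normal element of $D$ generating a prime ideal is a scalar multiple of $h$ or of $k$. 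Combined with Lemma~\ref{L:propautL:phihnotxn} (which, since $f=0$, permits $\phi(h)$ to be a scalar multiple of $x$, of $y$, or — via $k=yx$ — potentially a product), and with Lemma~\ref{L:aut:norm:prodnormpri} to rule out genuine products, the only possibilities for $\phi(h)$ are $\lambda h$, $\lambda k$, $\lambda x$, or $\lambda y$ with $\lambda\in\K^*$.

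Next I would dispatch each case. If $\phi(h)=\lambda h$ then $\phi\in\h$ by definition, and $\h$ is computed in Proposition~\ref{P:grautL:allh}(a) as $\{\phi_{(\alpha,\beta,\gamma)}\}\simeq(\K^*)^3$. If $\phi(h)=\lambda k$, then Proposition~\ref{P:grautL:another} applies; but that proposition requires $f(X)=\alpha X+\beta$, and since here $f=0$ it forces $\deg(f)=1$, a contradiction — so this case does not occur (note $k$ is central-like only when $\tau>0$, and one checks $k$ generates a prime ideal of $L$ only under the quantum-plane analysis, which still leads back to Proposition~\ref{P:grautL:another}). If $\phi(h)=\lambda x$ or $\lambda y$, then Proposition~\ref{P:grautL:htoxn} gives $s=r^{-1}$ and pins down $\phi$ on all generators: $\phi(x)=\lambda_1 y$, $\phi(y)=\lambda_2 h$ (or the symmetric triple $\phi(x)=\lambda_1 h$, $\phi(y)=\lambda_2 x$). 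In the case $s^\tau=r$ with $\tau\ge 2$ (so $s\ne r^{-1}$), the $x$- and $y$-cases for $\phi(h)$ are excluded by Proposition~\ref{P:grautL:htoxn}, leaving only $\phi(h)=\lambda h$ or $\lambda k$; the latter again demands $\deg(f)=1$ and is impossible, so $\aut=\h$ unless additionally some $\psi^+$-type automorphism exists — and indeed when $f=0$, $\epsilon=1$, $\alpha=0=\beta$, Lemma~\ref{L:grautL:someother}(a) furnishes the automorphisms $\psi^+_{(\mu,\mu',\nu,\eta)}$, which move $h$ to $\nu h+\eta k^\tau$; but that lands $\phi(h)$ in the third bracket of Lemma~\ref{L:norm:norminD}, handled by Proposition~\ref{P:grautL:htoNd0dl}, which forces $\epsilon=l=1$. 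Thus when $s^\tau=r$ exactly, $\phi(h)\in\{\nu h+\eta k^\tau\}$ and one reads off $\phi=\psi^+_{(\mu,\mu',\nu,\eta)}\circ(\text{something in }\h)$; normalizing by an element of $\h$ reduces the $\psi^+$ part to $\psi^+_{(1,1,1,t)}$.

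Finally I would assemble the semidirect-product structure. For part (a): the elements $\phi_{(\lambda_1,\lambda_2,\lambda_3)}$ form the normal subgroup $\h$ (it is the kernel of the action of $\aut$ on the set $\{\langle h\rangle,\langle x\rangle,\langle y\rangle\}$ of the three height-one primes generated by the relevant normal elements, or alternatively the stabilizer of $\langle h\rangle$), and the $\Z/3\Z$ quotient is generated by the cyclic permutation $\phi:x\mapsto y\mapsto h\mapsto x$ from Lemma~\ref{L:grautL:some4f0}; the conjugation formula $\phi\circ\phi_{(\lambda_1,\lambda_2,\lambda_3)}\circ\phi^{-1}=\phi_{(\lambda_3,\lambda_1,\lambda_2)}$ is a direct computation on the three generators, as is $\phi^3=\mathrm{id}$. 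For part (b): the set $\{\psi^+_{(1,1,1,t)}\mid t\in\K\}$ is a subgroup isomorphic to $(\K,+)$ since $\psi^+_{(1,1,1,t)}\circ\psi^+_{(1,1,1,t')}=\psi^+_{(1,1,1,t+t')}$ (check on $h$: $h\mapsto h+t'k^\tau\mapsto h+tk^\tau+t'k^\tau$, using that $\psi^+$ fixes $k$ up to the scalar $\mu\mu'=1$ here), it is normal (conjugation by $\phi_{(\lambda_1,\lambda_2,\lambda_3)}$ sends $k^\tau$ to $(\lambda_2\lambda_3)^\tau k^\tau$ and $h$ to $\lambda_1 h$, yielding the stated action $t\mapsto\lambda_1^{-1}(\lambda_2\lambda_3)^\tau t$), and every $\phi\in\aut$ decomposes uniquely as such a translation followed by an element of $\h$ by the case analysis above. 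Part (c) is immediate: in all remaining situations $\phi(h)$ can only be $\lambda h$.

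The main obstacle I anticipate is the bookkeeping in the case analysis — in particular, carefully arguing that $\phi(h)=\lambda k$ is genuinely impossible when $f=0$ (one must check whether $k$ itself generates a completely prime ideal and whether Proposition~\ref{P:grautL:another}'s hypothesis $f$ linear is the only gateway), and making sure the $\psi^+$ automorphisms exist precisely when $s^\tau=r$ with $\tau$ the minimal such exponent, so that the decomposition $\aut=\{\psi^+_{(1,1,1,t)}\}\rtimes\h$ is exhaustive. The conjugation computations and the verification that the listed maps are mutually inverse automorphisms are routine given Lemmas~\ref{L:grautL:someh},~\ref{L:grautL:someother} and~\ref{L:grautL:some4f0}.
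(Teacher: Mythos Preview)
Your approach is essentially the same as the paper's: classify $\phi(h)$ among the normal generators of completely prime ideals via Proposition~\ref{P:aut:norm:allnorm}, Lemma~\ref{L:norm:norminD} and Lemma~\ref{L:aut:norm:prodnormpri}, then invoke Propositions~\ref{P:grautL:another}, \ref{P:grautL:htoxn}, and \ref{P:grautL:htoNd0dl} to dispatch each case and assemble the semidirect products. The one organizational wobble is that your initial enumeration of possibilities for $\phi(h)$ (namely $\lambda h$, $\lambda k$, $\lambda x$, $\lambda y$) omits the case $\phi(h)=q$ with $q=\sum_{i}d_i(h^\epsilon)^{l-i}(k^\tau)^i$ from Lemma~\ref{L:norm:norminD}(c), which arises when $\tau,\epsilon>0$; you do recover it later as the source of the $\psi^+$ automorphisms, but in the paper this case sits in the primary split from the outset (with the non-centrality of $h$ used to exclude the central $q$ of Lemma~\ref{L:norm:norminD}(b)), so that the argument for part~(b) does not have to backtrack after first asserting $\aut=\h$.
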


\begin{proof}
Let $\phi\in\aut$. By Lemma~\ref{L:propautL:phih}, $\phi (h)$ is normal and generates a (completely) prime ideal of $L$. Hence, by Proposition~\ref{P:aut:norm:allnorm}, $\phi (h)=h^{a}k^{b}qx^{n}$ or $\phi (h)=h^{a}k^{b}qy^{n}$, for $a, b, n\geq 0$ and $q\in D$ as described in Lemma~\ref{L:norm:norminD}. Observing Lemma~\ref{L:aut:norm:prodnormpri}, we see that one of the following must occur:
\begin{equation*}
\phi (h)=\lambda h, \q \phi (h)=\lambda k, \q \phi (h)=q, \q \phi (h)=\lambda x \q \text{or}\q \phi (h)=\lambda y,
\end{equation*}
for some $\lambda\in\K^{*}$.

The case $\phi (h)=\lambda k$ cannot occur, by Proposition~\ref{P:grautL:another}, as $f=0$.
Notice that, since $h$ is not central ($r\neq 1$), if $\phi (h)=q$ then necessarily $\tau, \epsilon>0$ and 
$q=\sum_{i=0}^{l}d_{i}\left( h^{\epsilon} \right)^{l-i}\left( k^{\tau} \right)^{i}$, with $l>0$ and $d_{0}, d_{l}\neq 0$. Furthermore, in this case, Proposition~\ref{P:grautL:htoNd0dl} can be applied and we deduce that 
$\epsilon=1=l$ and $\phi=\psi^{+}_{(\mu, \mu', d_{0}, d_{1})}$ for  $\mu, \mu'\in\K^{*}$.
Similarly, if either $\phi (h)=\lambda x$ or $\phi (h)=\lambda y$ then Proposition~\ref{P:grautL:htoxn} implies that $(\tau, \epsilon)=(1, -1)$.

Suppose first that neither $\epsilon=1$ nor $(\tau, \epsilon)=(1, -1)$. Then the only possibility is $\phi (h)=\lambda h$ and $\phi\in\h$, as given in Proposition~\ref{P:grautL:allh}(a).

Now consider the case $\epsilon=1$. Then either $\phi (h)=\lambda h$ and $\phi\in\h$, or 
$\phi=\psi^{+}_{(\mu, \mu', d_{0}, d_{1})}$ for  $\mu, \mu'\in\K^{*}$. As $\psi^{+}_{(\mu, \mu', d_{0}, d_{1})}=\phi_{(d_{0}, \mu, \mu')}\circ \psi^{+}_{(1, 1, 1, t)}$, with $\phi_{(d_{0}, \mu, \mu')}\in\h$ and $t=d_{1}(\mu\mu')^{-\tau}$, we can assume without loss of generality that $\phi=\psi^{+}_{(1, 1, 1, t)}$, for some $t\in\K$. Now note that $\psi^{+}_{(1, 1, 1, 0)}=\mathrm{id}_{L}$ and $\psi^{+}_{(1, 1, 1, t)}\circ\psi^{+}_{(1, 1, 1, t')}=\psi^{+}_{(1, 1, 1, t+t')}$, so $\{ \psi^{+}_{(1, 1, 1, t)}\mid t\in\K \}$ is isomorphic to the additive group of $\K$. Furthermore, $\h\cap\{ \psi^{+}_{(1, 1, 1, t)}\mid t\in\K \}=\{ \mathrm{id}_{L}\}$ and it is routine to verify that 
\begin{equation*}
\phi_{(\lambda_{1}, \lambda_{2}, \lambda_{3})}\circ\psi^{+}_{(1, 1, 1, t)}\circ\phi_{(\lambda_{1}, \lambda_{2}, \lambda_{3})}^{-1}=\psi^{+}_{(1, 1, 1, \lambda_{1}^{-1} \left(\lambda_{2} \lambda_{3}\right)^{\tau}t)}.
\end{equation*}
So indeed $\aut=\{ \psi^{+}_{(1, 1, 1, t)}\mid t\in\K \}\rtimes \h\simeq\K\rtimes\left( \K^{*}\right)^{3}$, in this case.

Finally, let us consider the case $(\tau, \epsilon)=(1, -1)$, i.e., $s=r^{-1}$. As we have seen, either $\phi (h)=\lambda h$ and $\phi\in\h$ or $\phi (h)=\lambda x$ or $\phi (h)=\lambda y$. Assume that $\phi (h)=\lambda x$ (the case $\phi (h)=\lambda y$ is symmetric). Then by Proposition~\ref{P:grautL:htoxn}, $\phi (x)=\lambda_{1}y$ and $\phi (y)=\lambda_{2}h$. Composing $\phi$ with an appropriate element of $\h$, we can assume that $\phi (h)=x$, $\phi (x)=y$ and $\phi (y)=h$. Then $\phi^{2}(h)=y$ and $\phi^{3}=\mathrm{id}_{L}$. Hence 
$\langle\langle \phi\rangle\rangle$ is the cyclic group of order $3$, $\h\cap\langle\langle \phi\rangle\rangle=
\{ \mathrm{id}_{L} \}$ and 
\begin{equation*}
\phi\circ\phi_{(\lambda_{1}, \lambda_{2}, \lambda_{3})}\circ\phi^{-1}=\phi_{(\lambda_{3}, \lambda_{1}, \lambda_{2})}.
\end{equation*}
This proves that, in this case, $\aut=\h\rtimes \langle\langle \phi\rangle\rangle$.
\end{proof}

\begin{proposition}\label{P:thegp:fn0epos} 
Let $L=L(f, r, s, 0)$ be as before and suppose $f\neq0$ and $\epsilon>0$. Then:
\begin{enumerate}
\item $\aut=\{ \psi^{+}_{(1, 1, 1, t)}\mid t\in\K \}\rtimes \h$ if $s^{\tau}=r$ and $\deg (f)=0$, where $\psi^{+}_{(1, 1, 1, t)}$ is  given in Lemma~\ref{L:grautL:someother}(a); for all $\phi_{(\lambda_{1}, \lambda_{2})}\in\h$, $\phi_{(\lambda_{1}, \lambda_{2})}\circ\psi^{+}_{(1, 1, 1, t)}\circ\phi_{(\lambda_{1}, \lambda_{2})}^{-1}=\psi^{+}_{(1, 1, 1, \lambda_{1}^{-1}t)}$.

\item $\aut=\h$ otherwise.
\end{enumerate}
\end{proposition}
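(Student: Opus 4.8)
The plan is to follow closely the proof of Proposition~\ref{P:thegp:f0}. Fix $\phi\in\aut$. By Lemma~\ref{L:propautL:phih}, $\phi(h)$ is normal and generates a completely prime ideal, so by Proposition~\ref{P:aut:norm:allnorm} and Lemma~\ref{L:norm:norminD} I may write $\phi(h)=h^{a}k^{b}q\,x^{n}$ or $h^{a}k^{b}q\,y^{n}$, where each displayed factor is normal. Iterating Lemma~\ref{L:aut:norm:prodnormpri}, together with the fact that $\langle h^{a}\rangle$, $\langle k^{a}\rangle$ are not completely prime for $a\geq2$ and that $\langle x^{n}\rangle$, $\langle y^{n}\rangle$ cannot be completely prime since $f\neq0$ (Lemma~\ref{L:propautL:phihnotxn}), forces $\phi(h)$ to be, up to a nonzero scalar, one of $h$, $k$, or a non-scalar $q$. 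The branch $\phi(h)=\lambda k$ is eliminated by Proposition~\ref{P:grautL:another}, which would give $s=r^{-1}$, hence $\tau=1$ and $\epsilon=-1$, contradicting $\epsilon>0$. If $\phi(h)=q$ with $q$ non-scalar, then $\tau,\epsilon>0$ (otherwise $q$ would be central by Lemma~\ref{L:norm:norminD}, impossible as $h$ is not central since $r\neq1$), so $q=\sum_{i=0}^{l}d_{i}(h^{\epsilon})^{l-i}(k^{\tau})^{i}$ with $l\geq1$ and $d_{0},d_{l}\neq0$; then Proposition~\ref{P:grautL:htoNd0dl} yields $\epsilon=l=1$, $f(X)=\beta$ a nonzero constant (so $\deg(f)=0$), and $\phi=\psi^{+}_{(\mu,\mu',d_{0},d_{1})}$ with $\beta(\mu\mu'-1)=0$, i.e.\ $\mu\mu'=1$; moreover $\epsilon=1$ gives $s^{\tau}=r^{\epsilon}=r$. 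Hence, unless both $s^{\tau}=r$ and $\deg(f)=0$, the only remaining possibility is $\phi(h)=\lambda h$, giving $\phi\in\h$ by Proposition~\ref{P:grautL:allh}(b); this proves part~(b).

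For part~(a) I assume $s^{\tau}=r$ and $\deg(f)=0$ (with $\epsilon>0$, $f\neq0$). Since $r$ is not a root of unity, $\epsilon=1$, and $f=\beta\in\K^{*}$, $g=\beta/(s-1)$ are both constant, so Lemma~\ref{L:grautL:someother}(a) provides the automorphisms $\psi^{+}_{(\mu,\mu',\nu,\eta)}$ (with $\mu\mu'=1$), and by the first paragraph these together with $\h$ are all of $\aut$. It then remains to read off the semidirect product structure: I would check that $\{\psi^{+}_{(1,1,1,t)}\mid t\in\K\}$ is a subgroup isomorphic to $(\K,+)$ via $\psi^{+}_{(1,1,1,t)}\circ\psi^{+}_{(1,1,1,t')}=\psi^{+}_{(1,1,1,t+t')}$; that it meets $\h$ only in $\mathrm{id}$, because $\psi^{+}_{(1,1,1,t)}(h)=h+tk^{\tau}$ is a scalar multiple of $h$ only for $t=0$; and that it is complemented by $\h$, since $\psi^{+}_{(\mu,\mu',\nu,\eta)}=\phi_{(\nu,\mu)}\circ\psi^{+}_{(1,1,1,\eta)}$, which reduces to the identity $\phi_{(\nu,\mu)}(k^{\tau})=k^{\tau}$ (valid because $g$ is constant and $\phi_{(\nu,\mu)}(yx)=yx$ when $\deg(f)=0$). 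Finally a short computation using $\phi_{(\lambda_{1},\lambda_{2})}(k^{\tau})=k^{\tau}$ yields the conjugation rule $\phi_{(\lambda_{1},\lambda_{2})}\circ\psi^{+}_{(1,1,1,t)}\circ\phi_{(\lambda_{1},\lambda_{2})}^{-1}=\psi^{+}_{(1,1,1,\lambda_{1}^{-1}t)}$, so $\{\psi^{+}_{(1,1,1,t)}\}$ is normal and $\aut=\{\psi^{+}_{(1,1,1,t)}\mid t\in\K\}\rtimes\h$.

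Essentially all of the substantive work is already contained in the normal-element classification (Proposition~\ref{P:aut:norm:allnorm}, Lemmas~\ref{L:norm:norminD} and~\ref{L:norm:normxnyn}) and in Propositions~\ref{P:grautL:another} and~\ref{P:grautL:htoNd0dl}. The only delicate point here is the combinatorial bookkeeping that pins down exactly when an automorphism with $\phi(h)\notin\K^{*}h$ can occur: the hypothesis $\epsilon>0$ is precisely what kills the $\phi(h)=\lambda k$ branch and forces any automorphism moving $\langle h\rangle$ into the constant-$f$, $\epsilon=1$ situation. After that, the verification of the semidirect product decomposition and its conjugation action is routine.
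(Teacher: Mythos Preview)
Your proof is correct and follows essentially the same approach as the paper's: reduce the possibilities for $\phi(h)$ via the normal-element classification and Lemma~\ref{L:aut:norm:prodnormpri}, eliminate the $\lambda k$ and $x^{n}$/$y^{n}$ branches using Propositions~\ref{P:grautL:another} and~\ref{L:propautL:phihnotxn} (the hypotheses $\epsilon>0$ and $f\neq 0$ being exactly what is needed), invoke Proposition~\ref{P:grautL:htoNd0dl} for the remaining $q$-branch, and then verify the semidirect product structure. Your treatment is in fact slightly more explicit than the paper's, which simply refers back to the proof of Proposition~\ref{P:thegp:f0} for the factorization $\psi^{+}_{(\mu,\mu^{-1},\nu,\eta)}=\phi_{(\nu,\mu)}\circ\psi^{+}_{(1,1,1,\eta)}$ and the conjugation formula; your observation that $\phi_{(\nu,\mu)}(k)=k$ when $\deg(f)=0$ is precisely the reason the parameter $t$ equals $\eta$ here rather than $\eta(\mu\mu')^{-\tau}$ as in the $f=0$ case.
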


\begin{proof}
Let $\phi\in\aut$. As in the proof of Proposition~\ref{P:thegp:f0}, only two possibilities can occur: $\phi (h)=\lambda h$ or $\phi (h)=q$, with $q\in D$ as described in Lemma~\ref{L:norm:norminD}(c) (see Proposition~\ref{P:grautL:another} and Proposition~\ref{P:grautL:htoxn}).

If $\phi (h)=\lambda h$ then $\phi\in\h$. Otherwise, Proposition~\ref{P:grautL:htoNd0dl} implies that $\epsilon=1$, $\deg (f)=0$ and $\phi=\psi^{+}_{(\mu, \mu^{-1}, d_{0}, d_{1})}$ for $\mu, d_{0}, d_{1}\in\K^{*}$. Therefore, if either $\epsilon\neq 1$ or $\deg (f)\neq 0$ then $\aut=\h$. If $\epsilon=1$ and $\deg (f)=0$ then 
$\rho=0$ and $\h=\{ \phi_{(\lambda_{1}, \lambda_{2})}\mid (\lambda_{1}, \lambda_{2})\in\left(\K^{*}\right)^{2} \}$, with 
$\phi_{(\lambda_{1}, \lambda_{2})}(h)=\lambda_{1}h$, 
$\phi_{(\lambda_{1}, \lambda_{2})}(x)=\lambda_{2}x$,  $\phi_{(\lambda_{1}, \lambda_{2})}(y)=\lambda_{2}^{-1}y$. In case $\phi\notin\h$  we can assume $\phi=\psi^{+}_{(1, 1, 1, t)}$ and proceed as in the proof of Proposition~\ref{P:thegp:f0} to conclude that $\aut=\{ \psi^{+}_{(1, 1, 1, t)}\mid t\in\K \}\rtimes \h$.
\end{proof}

\begin{proposition}\label{P:thegp:fn0eneg}
Let $L=L(f, r, s, 0)$ be as before and suppose $f\neq0$ and $\epsilon\leq 0$. Then:
\begin{enumerate}
\item $\aut=\h\rtimes \langle\langle \psi^{-}_{(1, \frac{r^{-1}-r}{r\alpha})}\rangle\rangle$ if $s=r^{-1}$ and $f(X)=\alpha X$ for $\alpha\in\K^{*}$, where $\psi^{-}_{(1, \frac{r^{-1}-r}{r\alpha})}$ is  given in Lemma~\ref{L:grautL:someother}(b); for all $\phi_{(\lambda_{1}, \lambda_{2})}\in\h$, $\psi^{-}_{(1, \frac{r^{-1}-r}{r\alpha})}\circ\phi_{(\lambda_{1}, \lambda_{2})}\circ (\psi^{-}_{(1, \frac{r^{-1}-r}{r\alpha})})^{-1}=\phi_{(\lambda_{1}, \lambda_{2}^{-1}\lambda_{1})}$.

\item $\aut=\h\rtimes \langle\langle \psi^{-}_{(1, \frac{r^{-1}-r}{r\alpha})}\rangle\rangle$ if $s=r^{-1}$ and $f(X)=\alpha X+\beta$ for $\alpha, \beta\in\K^{*}$, where $\psi^{-}_{(1, \frac{r^{-1}-r}{r\alpha})}$ is  given in Lemma~\ref{L:grautL:someother}(b); for all $\phi_{(1, \lambda)}\in\h$, $\psi^{-}_{(1, \frac{r^{-1}-r}{r\alpha})}\circ\phi_{(1, \lambda)}\circ(\psi^{-}_{(1, \frac{r^{-1}-r}{r\alpha})})^{-1}=\phi_{(1, \lambda^{-1})}$.

\item $\aut=\h$ otherwise.
\end{enumerate}
\end{proposition}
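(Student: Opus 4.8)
The plan is to follow the template set by the proofs of Propositions~\ref{P:thegp:f0} and~\ref{P:thegp:fn0epos}, exploiting the classification of normal elements of $L$. So let $\phi\in\aut$. By Lemma~\ref{L:propautL:phih}, $\phi(h)$ is a normal element generating a completely prime ideal, and combining Proposition~\ref{P:aut:norm:allnorm} with Lemmas~\ref{L:norm:norminD},~\ref{L:norm:normxnyn} and~\ref{L:aut:norm:prodnormpri} exactly as in the first paragraph of the proof of Proposition~\ref{P:thegp:f0}, one narrows $\phi(h)$ to one of $\lambda h$, $\lambda k$, $q$, $\lambda x^n$, $\lambda y^n$ ($\lambda\in\K^*$, $n\geq1$), where $q$ is a non-scalar normal element of $D$ divisible by neither $h$ nor $k$. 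Since $f\neq0$, Lemma~\ref{L:propautL:phihnotxn} removes the forms $\lambda x^n$ and $\lambda y^n$; and since $\epsilon\leq0$, case (c) of Lemma~\ref{L:norm:norminD} cannot occur, so such a $q$ would be central by case (b), contradicting the non-centrality of $h$ (as $r$ is not a root of unity). Hence either $\phi(h)=\lambda h$, which by Proposition~\ref{P:grautL:allh} puts $\phi$ in $\h$, or $\phi(h)=\lambda k$.

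I would then feed $\phi(h)=\lambda k$ into Proposition~\ref{P:grautL:another}: it forces $s=r^{-1}$, $f(X)=\alpha X+\beta$ with $\alpha\in\K^*$ and $\beta\bigl(\tfrac{r\alpha\lambda}{s-r}-1\bigr)=0$, and $\phi=\psi^-_{(\mu,\lambda)}$ for some $\mu\in\K^*$. In particular this case is vacuous unless $s=r^{-1}$ and $\deg(f)=1$, which already yields conclusion (c): whenever $s\neq r^{-1}$, or $s=r^{-1}$ but $\deg(f)=0$, every $\phi\in\aut$ fixes $\langle h\rangle$, so $\aut=\h$. It remains to treat $s=r^{-1}$ with $\deg(f)=1$, i.e.\ either $f(X)=\alpha X$ ($\beta=0$, conclusion (a)) or $f(X)=\alpha X+\beta$ with $\alpha,\beta\neq0$ (conclusion (b)); in both, $s=r^{-1}$ gives $\tau=1$, $\epsilon=-1$, so $\psi^-$ is available through Lemma~\ref{L:grautL:someother}(b). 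In case (a) the constraint $\beta(\cdots)=0$ is empty and $\mu,\lambda$ range freely over $\K^*$; in case (b) it pins $\lambda=\tfrac{s-r}{r\alpha}$, leaving only $\mu$ free.

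Finally, set $\psi_0:=\psi^-_{(1,\,(r^{-1}-r)/(r\alpha))}$, a legitimate automorphism in both cases since $\tfrac{r\alpha}{s-r}\cdot\tfrac{r^{-1}-r}{r\alpha}=1$ makes the defining constraint of Lemma~\ref{L:grautL:someother}(b) hold. Using the formulas for $\psi^-_{(\mu,\nu)}$ on $k$ and for its inverse recorded in Lemma~\ref{L:grautL:someother}(b), I would check that $\psi_0^2=\mathrm{id}_L$, so $\langle\langle\psi_0\rangle\rangle\cong\Z/2\Z$, and that for every admissible $\psi^-_{(\mu,\lambda)}$ the composite $\psi^-_{(\mu,\lambda)}\circ\psi_0$ sends $h$ to a scalar multiple of itself, hence lies in $\h$; thus $\aut=\h\cup\h\psi_0=\h\langle\langle\psi_0\rangle\rangle$ and $\h\cap\langle\langle\psi_0\rangle\rangle=\{\mathrm{id}_L\}$ since $\psi_0(h)$ is a multiple of $k$. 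The conjugation formulas $\psi_0\phi_{(\lambda_1,\lambda_2)}\psi_0^{-1}=\phi_{(\lambda_1,\lambda_2^{-1}\lambda_1)}$ (case (a)) and $\psi_0\phi_{(1,\lambda)}\psi_0^{-1}=\phi_{(1,\lambda^{-1})}$ (case (b)), checked directly on generators, give $\h\trianglelefteq\aut$ and the decomposition $\aut=\h\rtimes\langle\langle\psi_0\rangle\rangle$; the identifications with $(\K^*)^2\rtimes\Z/2\Z$ and $\K^*\rtimes\Z/2\Z$ then follow from Proposition~\ref{P:grautL:allh}(b) together with the facts that $\rho=0$ for $f(X)=\alpha X$ and $\rho=1$ for $f(X)=\alpha X+\beta$ ($\beta\neq0$). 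I expect no genuine obstacle: the only delicate point is tracking the constraint $\beta\bigl(\tfrac{r\alpha\lambda}{s-r}-1\bigr)=0$, whose different behaviour for $\beta=0$ versus $\beta\neq0$ is precisely what separates conclusions (a) and (b), while everything else is an assembly of the structural results of Section~\ref{SS:grautL} plus short computations on the generators.
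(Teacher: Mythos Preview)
Your proposal is correct and follows essentially the same route as the paper's proof: reduce $\phi(h)$ to $\lambda h$ or $\lambda k$ via the normal-element machinery, invoke Proposition~\ref{P:grautL:another} to force $s=r^{-1}$ and $\deg(f)=1$ in the latter case, then use $\psi_0=\psi^-_{(1,(s-r)/(r\alpha))}$ (which equals your $\psi^-_{(1,(r^{-1}-r)/(r\alpha))}$) as the order-two coset representative and verify the semidirect-product structure. The only cosmetic difference is that the paper factors each $\psi^-_{(\mu,\lambda)}$ directly as an element of $\h$ times $\psi_0$, whereas you show $\psi^-_{(\mu,\lambda)}\circ\psi_0\in\h$; these are equivalent since $\psi_0^2=\mathrm{id}_L$. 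One small slip: in your enumeration for case (c) you list only ``$s\neq r^{-1}$'' and ``$s=r^{-1}$ but $\deg(f)=0$'', omitting $\deg(f)\geq 2$; your preceding sentence (``vacuous unless $s=r^{-1}$ and $\deg(f)=1$'') already covers it, so just tighten that clause.
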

\begin{proof}
We only sketch the proof, as it is similar to the proof of the two previous results.

Assume $\phi\notin\h$. Hence, as before, the only other possibility is $\phi (h)=\lambda k$, for some 
$\lambda\in\K^{*}$. Then, by Proposition~\ref{P:grautL:another}, $s=r^{-1}$ and $\deg (f)=1$. Write $f(X)=\alpha X+\beta$, with $\alpha\neq 0$. Thus $\phi=\psi^{-}_{(\mu, \lambda)}$ for $\mu\in\K^{*}$ and $\beta(\frac{r\alpha\lambda}{s-r}-1)=0$.

Suppose $\beta=0$. Then $\rho=0$, $\h=\{ \phi_{(\lambda_{1}, \lambda_{2})}\mid (\lambda_{1}, \lambda_{2})\in\left(\K^{*}\right)^{2} \}$ and it can be assumed that $\phi=\psi^{-}_{(1, \frac{s-r}{r\alpha})}$, as 
$\psi^{-}_{(\mu, \lambda)}=\phi_{(\frac{r\alpha\lambda}{s-r}, \frac{r\alpha\lambda}{\mu (s-r)})}\circ\psi^{-}_{(1, \frac{s-r}{r\alpha})}$. 
The result follows in this case because  
$(\psi^{-}_{(1, \frac{s-r}{r\alpha})})^{2}=\mathrm{id}_{L}$, $\h\cap \langle\langle \psi^{-}_{(1, \frac{s-r}{r\alpha})}\rangle\rangle=\{ \mathrm{id}_{L}\}$ and 
\begin{equation*}
\psi^{-}_{(1, \frac{s-r}{r\alpha})}\circ\phi_{(\lambda_{1}, \lambda_{2})}\circ (\psi^{-}_{(1, \frac{s-r}{r\alpha})})^{-1}=\phi_{(\lambda_{1}, \lambda_{2}^{-1}\lambda_{1})}.
\end{equation*}

The case $\beta\neq 0$ is analogous, with $\rho=1$ and $\frac{r\alpha\lambda}{s-r}=1$.
\end{proof}


\section{Automorphisms of down-up algebras}\label{S:adua}

Having computed in Section~\ref{S:aut} the automorphism group of the generalized down-up algebras $L(f, r, s, \gamma)$ which are conformal, Noetherian and for which $r$ is not a root of unity, we specialize in this section our results to the case of down-up algebras. We remark that the isomorphism problem for Noetherian down-up algebras has already been solved in~\cite{CM00}.

Other classes of algebras to which our study applies are Le Bruyn's \emph{conformal $\mathfrak{sl}_{2}$ enveloping algebras}~\cite{lLB95}, occuring as $L(bx^{2}+x, r, s, \gamma)$, for $b\in\K$ and $rs\neq 0$, and some of Witten's seven parameter deformations of the enveloping algebra of $\mathfrak{sl}_{2}$~\cite{eW90} (see also~\cite[Thm.~2.6]{gB99} and~\cite[Ex.~1.4]{CS04}). We leave it to the reader to apply Theorem~\ref{T:thegp:main} to these and perhaps to other classes of generalized down-up algebras.

\subsection{Some well-known examples}\label{SS:adua:swke}

We start by computing some examples, which have appeared elsewhere in the literature, namely \cite{AC92}, \cite{AD96} and \cite{pC95}.

The quantum Heisenberg algebra $\mathbb{H}_{q}$ is a deformation of the enveloping algebra of the $3$-dimensional Heisenberg Lie algebra. It can be viewed as the positive part in the triangular decomposition of the quantized enveloping algebra corresponding to the simple complex Lie algebra $\mathfrak{sl}_{3}$ of traceless $3\times 3$ matrices. It is presented as the unital associative $\K$-algebra generated by $X$, $Y$, $Z$, with relations:
\begin{equation}\label{E:adua:swke:qH}
XZ=qZX, \q\q ZY=qYZ, \q\q XY-q^{-1}YX=Z,
\end{equation}
where $q\in\K^{*}$.

The automorphism group of the quantum Heisenberg algebra was computed  by Caldero~\cite{pC95} in case $q$ is transcendental over $\Q$ and, independently, by Alev and Dumas~\cite{AD96} just assuming $q$ is not a root of $1$. It is the semidirect product of the $2$-torus 
$\left(\K^{*} \right)^{2}$, acting diagonally on the generators $X$ and $Y$, and the finite group $\Z/2\Z$, acting as the symmetric group on $X$ and $Y$. From relations~(\ref{E:adua:swke:qH}), we see that 
$\mathbb{H}_{q}$ is the algebra $L(-X, q, q^{-1}, 0)$, isomorphic to the down-up algebra $A(q+q^{-1}, -1, 0)$. Thus, $L(-X, q, q^{-1}, 0)$ is Noetherian for all choices of $q\in\K^{*}$, and conformal provided $q\neq 1, -1$. If we assume, as in~\cite{AD96}, that $q$ is not a root of $1$, then $\tau=1$, $\epsilon=-1$ and $\rho=0$. Hence, we retrieve~\cite[Prop. 2.3]{AD96} in Theorem~\ref{T:thegp:main}(d).

Another example, which is not that of a down-up algebra, is the algebra of regular functions on quantum affine $3$-space. This is the unital associative $\K$-algebra with generators  $x_{1}$, $x_{2}$, $x_{3}$, satisfying the relations: 
\begin{equation}\label{E:adua:swke:q3sp}
x_{1}x_{2}=q_{12}x_{2}x_{1}, \q\q x_{1}x_{3}=q_{13}x_{3}x_{1}, \q\q x_{2}x_{3}=q_{23}x_{3}x_{2},
\end{equation}
where $q_{12}, q_{13}, q_{23}\in\K^{*}$.
In case $q_{12}q_{13}=1$, this algebra coincides with the generalized down-up algebra $L(0, r, s, 0)$, with $r=q_{13}=q_{12}^{-1}$ and $s=q_{23}$. Therefore, Proposition~\ref{P:thegp:f0} can be used to compute $\aut$ whenever $r$ is not a root of $1$. In particular, if $r=s$ is not a root of unity, it was seen in~\cite[Th\'e. 1.4.6i)]{AC92} that $\aut$ is isomorphic to the semidirect product $\K\rtimes\left(\K^{*} \right)^{3}$. Since, in this case, $\tau=\epsilon=1$, we also obtain this description of $\aut$ in 
Theorem~\ref{T:thegp:main}(b).

\subsection{Down-up algebras}\label{SS:adua:dua}

Using Proposition~\ref{P:conf:gamma0rnot1}, Theorem~\ref{T:thegp:main} and also some results of Carvalho and Musson~\cite{CM00} and  Jordan~\cite{dJ00}, we will now compute the automorphism group of all down-up algebras $A(\alpha, \beta, \gamma)$, except in the cases where either $\beta=0$ or where both roots $r$ and $s$ of the polynomial $X^{2}-\alpha X-\beta$ are roots of $1$. 

Our exceptions include two remarkable examples, corresponding to $r=s=1$. One is $U(\mathfrak{sl}_{2})$, the enveloping algebra of the complex simple Lie algebra $\mathfrak{sl}_{2}$, which occurs in the family of down-up algebras as $A(2, -1, 1)$. The other one is $U(\mathfrak{h})$, the enveloping algebra of the $3$-dimensional Heisenberg Lie algebra, occurring as $A(2, -1, 0)$. Neither for $U(\mathfrak{sl}_{2})$ nor for $U(\mathfrak{h})$ is the full group of automorphisms known, and in both cases \emph{wild} automorphisms have been shown to exist, by work of Joseph~\cite{aJ76} and Alev~\cite{jA86}, respectively. Regarding $U(\mathfrak{sl}_{2})$, Dixmier computed the automorphism group of the minimal primitive quotients of 
this algebra in~\cite{jD73}. As for the primitive quotients of $U(\mathfrak{h})$ which are not one-dimensional, these are isomorphic to the first Weyl algebra ${A}_{1}(\K)$, whose group of automorphisms was also computed by Dixmier in~\cite{jD68}. 

In~\cite{BJ01}, Bavula and Jordan solved the isomorphism problem and found generators for the automorphism group of  generalized Weyl algebras of the form 
$\K[X](X\stackrel{\sigma}{\mapsto}X-1, a)$, a class which includes the infinite-dimensional primitive quotients of both $U(\mathfrak{sl}_{2})$ and $U(\mathfrak{h})$. They also solved the isomorphism problem for Smith's algebras $L(f, 1, 1, 1)$ similar to $U(\mathfrak{sl}_{2})$. We note that our results do not overlap with those of~\cite{BJ01}. Other generalized Weyl algebras of the form 
$\K[X](X\stackrel{\sigma}{\mapsto}qX, a)$, with $q$ not a root of unity, were studied in~\cite{RS06}, and their automorphism group was determined. With minor changes, \cite[Cor.\ 2.2.7]{RS06} can be adapted to describe the automorphism group of the down-up algebras of the form $A(r+1, -r, 0)$, with $r\in\K^{*}$ not a root of unity. This may be achieved by replacing in~\cite[Cor.\ 2.2.7]{RS06} the base field $\K$ by the domain $\K[X]$, and observing that the arguments used are still valid. As a result, we would retrieve a subcase of Theorem~\ref{T:adua:dua:main2}(b) below.

Fix $\alpha$, $\beta$, $\gamma\in\K$ with $\beta\neq 0$, and let $r$ and $s$ be the roots of the polynomial $X^{2}-\alpha X-\beta$ in $\K$. Thus $\alpha=r+s$ and $\beta=-rs$. The down-up algebra $A=A(\alpha, \beta, \gamma)$, as defined in~\cite{BR98}, coincides with $L(X, r, s, \gamma)$, upon 
identifying the canonical generators $d$ and $u$ of $A$ with the generalized Weyl algebra generators 
$x$ and $y$ of $L$, respectively. Since $r$ and $s$ have  symmetric roles in $A$, it should be no surprise that $L(X, r, s, \gamma)\simeq L(X, s, r, \gamma)$, under an isomorphism taking $x$ to $x$, $y$ to $y$ and $h\in L(X, r, s, \gamma)$ to $h+(s-r)yx\in L(X, s, r, \gamma)$. Hence, when dealing with down-up algebras, we can interchange the roles of $r$ and $s$ in $L$. Also, the generator $h$ of $L$ is redundant when $\deg (f)=1$, so in this case it will suffice to give the action of an automorphism of $L$ on the generators $x$ and $y$.

Our results in this section will apply to all down-up algebras $A$ under the restrictions that $rs\neq 0$ and that one of $r$ or $s$ is not a root of $1$.  In view of the symmetric roles of $r$ and $s$, we always assume that $r$ is not a root of $1$. In particular, $A=L(X, r, s, \gamma)\simeq L(\frac{1}{r-1}(X+\gamma), r, s, 0)$, by Proposition~\ref{P:conf:gamma0rnot1}. We distinguish three cases:

\medskip

\emph{Case 1}: $r\neq s$, $r$ is not a root of $1$ and $s\neq 1$. In this case, 
$A\simeq L(\frac{1}{r-1}(X+\gamma), r, s, 0)$ is conformal and $\aaut$ is given in 
Theorem~\ref{T:thegp:main}(d)--(f). If $\gamma=0$ then $\rho=0$; if $\gamma\neq0$ then $\rho=1$.

Assume first that $\gamma=0$. Then $\h\simeq\left(\K^{*} \right)^{2}$ acts diagonally on the generators $x$ and $y$. If, in addition, $s=r^{-1}$ then there is an automorphism of $A$ of order $2$ which interchanges $x$ and $y$.

Now assume $\gamma\neq 0$. Then $\h\simeq\K^{*}$ and $\lambda\in\K^{*}$ acts on $x$ by multiplication by $\lambda$ and on $y$ by multiplication by $\lambda^{-1}$. If, in addition, $s=r^{-1}$ then there is an automorphism of $A$ of order $2$ which interchanges $x$ and $y$.

\smallskip

\emph{Case 2}: $r$ is not a root of $1$ and $s=1$. Assume $\gamma=0$. Then $A=L(X, r, s, 0)$ is conformal, as $r\neq 1$. Also, $\tau=1$, $\epsilon=0$ and $\rho=0$. Thus, by Theorem~\ref{T:thegp:main}, $\aaut=\h\simeq\left(\K^{*} \right)^{2}$, acting diagonally on  $x$ and $y$.

Now assume $\gamma\neq 0$. Then $A=L(X, r, s, \gamma)$ is not conformal, by Proposition~\ref{P:conf:gamma0rnot1} and Lemma~\ref{L:conf:gamma0}, so Theorem~\ref{T:thegp:main} cannot be applied. Let $\omega=yx-xy+\frac{\gamma}{1-r}$. By~\cite[Cor.\ 4.10]{CM00}, any automorphism $\phi$ of $A$ must fix the ideal $\omega A$. By Lemma~\ref{L:noeth:units}, there exists $\lambda\in\K^{*}$ so that $\phi (\omega)=\lambda\omega$. The proof of Proposition~\ref{P:grautL:allh}
can  be readily adapted to show that $\phi (x)=\mu x$ and $\phi (y)=\mu^{-1} y$, for some $\mu\in\K^{*}$. Thus, $\aaut\simeq\K^{*}$.

\smallskip

\emph{Case 3}: $r=s$ is not a root of $1$. In this case, $A=L(X, r, r, \gamma)$ is not conformal, by Proposition~\ref{P:conf:gamma0rnot1} and Lemma~\ref{L:conf:gamma0}. We can use the description of the height one prime ideals of $A$ that appears in~\cite[Prop. 6.13]{dJ00} precisely for the case that $r$ is not a root of one. Indeed, let $\omega=ryx-xy+\frac{\gamma}{1-r}$. Then $\omega\in D$, $x\omega=r\omega x$ and 
$\omega y=ry\omega$, so $\omega$ is normal. If $\gamma=0$ then $\omega A$ is the unique height one prime ideal of $A$. If $\gamma\neq0$ then the height one primes of $A$ are 
$\omega A$ and the annihilators of certain simple finite-dimensional $A$-modules. 
In either case, $\omega A$ is the unique height one prime ideal of $A$ not having finite codimension, as 
$A/\omega A$ is either a quantum plane ($\gamma=0$) or a quantum Weyl algebra ($\gamma\neq0$). 
Thus, all automorphisms of $A$ fix the ideal generated by $\omega$. As above, we deduce that, given $\phi\in\aaut$, there exist nonzero scalars $\lambda$, $\mu$ so that $\phi (x)=\lambda x$ and $\phi (y)=\mu y$. 
If $\gamma=0$, no further restrictions arise on the parameters $\lambda$, $\mu$ and 
$\aaut\simeq\left(\K^{*} \right)^{2}$. In case $\gamma\neq 0$, there is only the additional restriction that $\lambda=\mu^{-1}$, so $\aaut\simeq\K^{*}$.

\medskip

We summarize out results on down-up algebras in the following theorem. For the convenience of those readers who are mostly interested in down-up algebras, we replace our usual generators $x$ and $y$ of $L$ with the canonical generators $d$ and $u$ of $A$, respectively.

\begin{theorem}\label{T:adua:dua:main2}
Let $A=A(\alpha, \beta, \gamma)$ be a down-up algebra, with $\alpha=r+s$ and $\beta=-rs$. Assume that $\beta\neq 0$ and 
that one of $r$ or $s$ is not a root of unity. The group $\aaut$ of algebra automorphisms of $A$ is described bellow.
\begin{enumerate}
\item If $\gamma=0$ and $\beta=-1$ then $\aaut\simeq\left(\K^{*} \right)^{2}\rtimes\Z/2\Z$;
\item If $\gamma=0$ and $\beta\neq-1$ then $\aaut\simeq\left(\K^{*} \right)^{2}$;
\item If $\gamma\neq0$ and $\beta=-1$ then $\aaut\simeq\K^{*}\rtimes\Z/2\Z$;
\item If $\gamma\neq0$ and $\beta\neq-1$ then $\aaut\simeq\K^{*}$.
\end{enumerate}
In all cases, the $2$-torus $\left(\K^{*} \right)^{2}$ acts diagonally on the generators $d$ and $u$, $\mu\in\K^{*}$ acts as multiplication by $\mu$ on $d$ and as multiplication by $\mu^{-1}$ on $u$, and the generator of the finite group $\Z/2\Z$ interchanges $d$ and $u$.
\end{theorem}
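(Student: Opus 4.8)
The plan is to derive Theorem~\ref{T:adua:dua:main2} from Theorem~\ref{T:thegp:main} together with the ``non-conformal'' case analysis already carried out in Cases 2 and 3 of Section~\ref{SS:adua:dua}. First I would organize the parameters: since $\beta=-rs\neq 0$ we have $rs\neq 0$, so $A=A(\alpha,\beta,\gamma)$ is Noetherian; and since at least one of $r,s$ is not a root of unity, after the symmetry $r\leftrightarrow s$ (realized by the isomorphism $L(X,r,s,\gamma)\simeq L(X,s,r,\gamma)$ recalled above) we may and do assume $r$ is not a root of unity. By Lemma~\ref{L:iso:duaGdua}(a), $A\simeq L(X,r,s,\gamma)$, and by Proposition~\ref{P:conf:gamma0rnot1}, $A\simeq L(\tilde f,r,s,0)$ with $\tilde f(X)=\frac1{r-1}(X+\gamma)$, a polynomial of degree $1$. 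So $\deg(f)=1$ throughout, and the relevant invariants are $\rho$ (which is $0$ if $\tilde f(0)=0$, i.e. $\gamma=0$, and $1$ if $\gamma\neq 0$), together with $\tau$ and $\epsilon$.

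Next I would split according to the relation between $r$ and $s$, matching each subcase to a clause of Theorem~\ref{T:thegp:main} or to Cases 2--3 of Section~\ref{SS:adua:dua}. The clean case is when $L(\tilde f,r,s,0)$ is conformal, which by Lemma~\ref{L:conf:gamma0} happens exactly when $s\neq r^{i}$ for $i\in\mathrm{supp}(\tilde f)=\{0,1\}$, i.e. $s\neq 1$ and $s\neq r$. Here Theorem~\ref{T:thegp:main}(d)--(f) applies. The dichotomy in those clauses is whether $s=r^{-1}$, equivalently $rs=1$, equivalently $\beta=-1$; and within that, whether $f(\tfrac{\gamma}{r-1})=0$, i.e. $\tilde f(0)=0$, i.e. $\gamma=0$. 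Thus: $\gamma=0,\ \beta=-1$ gives clause (d) of Theorem~\ref{T:thegp:main}, namely $(\K^{*})^{2}\rtimes\Z/2\Z$; $\gamma\neq 0,\ \beta=-1$ gives clause (e), namely $\K^{*}\rtimes\Z/2\Z$; and $\beta\neq -1$ (with $s\neq 1,r$) gives clause (f), which is $\h\simeq(\K^{*})^{2}$ if $\gamma=0$ and $\h\simeq\Z/\rho\Z\times\K^{*}=\K^{*}$ if $\gamma\neq 0$ (using $\rho=1$). For the non-conformal possibilities, $s=1$ or $s=r$: if $s=1$, Case~2 already gives $\aaut\simeq(\K^{*})^{2}$ when $\gamma=0$ and $\aaut\simeq\K^{*}$ when $\gamma\neq 0$ (note $s=1$ forces $rs=r\neq 1$, so $\beta=-r\neq -1$, consistent with landing in clauses (b)/(d)); if $s=r$, then $\beta=-r^{2}$, and since $r$ is not a root of unity, $\beta\neq -1$, and Case~3 gives $\aaut\simeq(\K^{*})^{2}$ for $\gamma=0$ and $\aaut\simeq\K^{*}$ for $\gamma\neq 0$, again consistent with (b)/(d).

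Having matched every subcase, I would then \emph{assemble} the four clauses of the theorem: (a) $\gamma=0,\beta=-1$: all subcases forcing $\beta=-1$ have $s\neq 1$ (as $s=1\Rightarrow\beta\neq-1$) and $s\neq r$ (as $s=r\Rightarrow\beta\neq-1$), so we are always in the conformal clause (d), giving $(\K^{*})^{2}\rtimes\Z/2\Z$; (b) $\gamma=0,\beta\neq-1$: whether we are conformal (clause (f) with $\rho=0$) or non-conformal (Case~2 or Case~3), the answer is $(\K^{*})^{2}$; (c) $\gamma\neq 0,\beta=-1$: as in (a) we are in clause (e), giving $\K^{*}\rtimes\Z/2\Z$; (d) $\gamma\neq 0,\beta\neq-1$: conformal clause (f) with $\rho=1$ gives $\K^{*}$, and Cases~2--3 give $\K^{*}$ as well. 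Finally I would translate the explicit generator descriptions back to the canonical down-up generators $d,u$: under the identifications $x=d$, $y=u$, the torus $\h$ acts diagonally on $d$ and $u$; when $\rho=1$ the subtorus is $\{(\lambda,\mu):\lambda^{\rho}=1\}$ cut down so that $d\mapsto\mu d$, $u\mapsto\mu^{-1}u$ (this is exactly the $\h\simeq\K^{*}$ description); and the extra order-two automorphism in the $\beta=-1$ cases is $\psi^{-}$ of Lemma~\ref{L:grautL:someother}(b), or in the $f=0$-free degree-one situation the involution from clause (d)/(e) of Theorem~\ref{T:thegp:main}, which interchanges $x$ and $y$, hence $d$ and $u$.

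\textbf{Main obstacle.} The only real work is bookkeeping: confirming that the hypotheses of Theorem~\ref{T:thegp:main}(d)--(f) translate correctly into conditions on $\alpha,\beta,\gamma$ (in particular that $f(\tfrac{\gamma}{r-1})=0\iff\gamma=0$ since $\tilde f$ has degree exactly $1$, and that $\deg(f)=1$ for the down-up algebra so the relevant $\rho\in\{0,1\}$), and that the genuinely non-conformal cases $s=1$ and $s=r$ — which Theorem~\ref{T:thegp:main} does not cover — are handled by Cases~2 and~3 and produce answers consistent with clauses (b) and (d). One must also check the degenerate overlaps (e.g. could $s=1$ and $s=r^{-1}$ both hold? that would force $r=1$, excluded; could $s=r$ and $\beta=-1$? that forces $r^{2}=1$, excluded since $r$ is not a root of unity), so that the four clauses of the theorem are unambiguous and exhaustive. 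No delicate argument is needed beyond this verification.
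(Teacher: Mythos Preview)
Your proposal is correct and follows essentially the same approach as the paper: reduce to $L(\tilde f,r,s,0)$ via Proposition~\ref{P:conf:gamma0rnot1}, apply Theorem~\ref{T:thegp:main}(d)--(f) in the conformal range, and invoke the separate analyses (your Cases~2 and~3, which are exactly the paper's Cases~2 and~3) for $s=1$ and $s=r$. One small slip: you write $\mathrm{supp}(\tilde f)=\{0,1\}$, but when $\gamma=0$ the support is just $\{1\}$, so $L(\tilde f,r,1,0)$ is in fact conformal and Theorem~\ref{T:thegp:main} already covers it---this is harmless, since Case~2 gives the same answer $(\K^{*})^{2}$ anyway.
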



\begin{thebibliography}{99}
\bibitem
{jA86}
J. Alev,
\textsl{Un automorphisme non mod\'er\'e de {${U}({\germ g}\sb
              3)$}},
Comm. Algebra
\textbf{14}
(1986),
no. 8,
1365--1378.
\bibitem
{AC92}
J. Alev and M. Chamarie,
\textsl{D\'erivations et automorphismes de quelques alg\`ebres
              quantiques},
Comm. Algebra
\textbf{20}
(1992),
no. 6,
1787--1802.
\bibitem
{AD96}
J. Alev and F. Dumas,
\textsl{Rigidit\'e des plongements des quotients primitifs minimaux de
              {$U\sb q({\rm sl}(2))$} dans l'alg\`ebre quantique de
              {W}eyl-{H}ayashi},
Nagoya Math. J.
\textbf{143}
(1996),
119--146.
\bibitem
{ADpr}
N. Andruskiewitsch and F. Dumas,
\textsl{On the automorphisms of ${U}_{q}^{+}(\germ g)$},
arXiv:math.QA/0301066, to appear.
\bibitem
{vB93}
V.V. Bavula,
\textsl{Generalized Weyl algebras and their representations},
St. Petersbg. Math. J.
\textbf{4}
(1993),
no. 1,
71--92.
\bibitem
{BJ01}
V.V. Bavula and D.A. Jordan,
\textsl{Isomorphism problems and groups of automorphisms for generalized Weyl algebras},
Trans. Am. Math. Soc.
\textbf{353}
(2001),
no. 2,
769--794.
\bibitem
{BR98}
G. Benkart and T. Roby,
\textsl{Down-up algebras},
J. Algebra
\textbf{209}
(1998),
no. 1,
305--344.
\bibitem
{gB99}
G. Benkart,
\textsl{Down-up algebras and Witten's deformations of the universal enveloping
    algebra of ${\germ sl}_2$},
Contemp. Math.
\textbf{224}
(1999),
29--45.
\bibitem
{pC95}
P. Caldero,
\textsl{\'{E}tude des {$q$}-commutations dans l'alg\`ebre {$U\sb q({\germ n}\sp +)$}},
J. Algebra
\textbf{178}
(1995),
no.\ 2,
444--457. 
\bibitem
{CM00}
P.A.A.B. Carvalho and I.M. Musson,
\textsl{Down-up algebras and their representation theory},
J. Algebra
\textbf{228}
(2000),
no. 1,
286--310.
\bibitem
{CS04}
T. Cassidy and B. Shelton,
\textsl{Basic properties of generalized down-up algebras},
J. Algebra
\textbf{279}
(2004),
no. 1,
402--421.
\bibitem
{jD68}
J. Dixmier,
\textsl{Sur les alg\`ebres de {W}eyl},
Bull. Soc. Math. France
\textbf{96}
(1968),
209--242.
\bibitem
{jD73}
J. Dixmier,
\textsl{Quotients simples de l'alg\`ebre enveloppante de ${\germ sl}_2$},
J. Algebra
\textbf{24}
(1973),
551--564.
\bibitem
{GTEK02}
J. G{\'o}mez-Torrecillas and L. El Kaoutit,
\textsl{The group of automorphisms of the coordinate ring of quantum symplectic space},
Beitr\"age Algebra Geom.
\textbf{43}
(2002),
no.\ 2,
597--601.
\bibitem
{dJ00}
D.A. Jordan,
\textsl{Down-up algebras and ambiskew polynomial rings},
J. Algebra
\textbf{228}
(2000),
no. 1,
311--346.
\bibitem
{aJ76}
A. Joseph,
\textsl{A wild automorphism of {${\rm Usl}(2)$}},
Math. Proc. Cambridge Philos. Soc.
\textbf{80}
(1976),
no.\ 1,
61--65.
\bibitem
{KMP99}
E. Kirkman  and I.M. Musson and D.S. Passman,
\textsl{Noetherian down-up algebras},
Proc. Am. Math. Soc.
\textbf{127}
(1999),
no. 11,
3161--3167.
\bibitem
{lLB95}
L. Le Bruyn,
\textsl{Conformal $sl\sb 2$ enveloping algebras},
Commun. Algebra
\textbf{23}
(1995),
no. 4,
1325--1362.
\bibitem
{sLn07}
S. Launois,
\textsl{Primitive ideals and automorphism group of ${U}_{q}^{+}(\rm {B}_{2})$},
J. Algebra Appl.
\textbf{6}
(2007),
no. 1,
21--47.
\bibitem
{LnLe05pr}
S. Launois and T.H. Lenagan,
\textsl{Primitive ideals and automorphisms of quantum matrices},
arXiv:math.RA/0511409, to appear in Algebras and Representation Theory.
\bibitem
{LnL07pr}
S. Launois and S.A. Lopes,
\textsl{Automorphisms and derivations of $U_{q}(\mathfrak{sl}_{4}^{+})$},
arXiv:math.QA/0606134, to appear in Journal of Pure and Applied Algebra.
\bibitem
{iP07}
I. Praton,
\textsl{Simple weight modules of non-Noetherian generalized down-up algebras},
Commun. Algebra
\textbf{35}
(2007),
no. 1,
325--337.
\bibitem
{RS06}
L. Richard and A. Solotar,
\textsl{Isomorphisms between quantum generalized Weyl algebras},
J. Algebra Appl.
\textbf{5}
(2006),
no. 3,
271--285.
\bibitem
{sR02}
S. Rueda,
\textsl{Some algebras similar to the enveloping algebra of $sl(2)$},
Commun. Algebra
\textbf{30}
(2002),
no. 3,
1127--1152.
\bibitem
{spS90}
S.P. Smith,
\textsl{A class of algebras similar to the enveloping algebra of sl(2)},
Trans. Am. Math. Soc.
\textbf{322}
(1990),
no. 1,
285--314.
\bibitem
{eW90}
E. Witten,
\textsl{Gauge theories, vertex models, and quantum groups},
Nuclear Phys. B
\textbf{330}
(1990),
285--346.





\end{thebibliography}

\noindent
Paula A.A.B. Carvalho\\
Centro de Matem\'atica da Universidade do Porto,\\
Rua do Campo Alegre 687, 4169-007 Porto, Portugal\\
E-mail: pbcarval@fc.up.pt
 \\[10pt]
Samuel A. Lopes\\
Centro de Matem\'atica da Universidade do Porto,\\
Rua do Campo Alegre 687, 4169-007 Porto, Portugal\\
E-mail: slopes@fc.up.pt

\end{document}